\DeclareMathOperator{\Hom}{Hom}
\newtheorem{theorem}{Theorem}[section]
\newtheorem{proposition}[theorem]{Proposition}
\newtheorem{lemma}[theorem]{Lemma}
\newtheorem{corollary}[theorem]{Corollary}
\newtheorem{definition}{Definition}
\theoremstyle{remark} 
\newtheorem{remark}[theorem]{Remark}
\numberwithin{equation}{section}
\begin{document}
	
	\title[Indecomposable modules determined by composition factors]{On Artin algebras whose indecomposable modules are determined by composition factors}

	\author[V.~Blasco]{Víctor ~Blasco}
	\thanks{}
	
	\address{Departament of
		Mathematics, Louisiana State University, 70803 Baton Rouge, LA, United States}
	\email{vblasc1@lsu.edu} 	


	
	\keywords{Artin algebras, composition factors, finite representation type, algebraic bimodules}
	
	
	\begin{abstract}
	 It was conjectured at the end of the book ``Representation theory of Artin algebras" by M. Auslander, I. Reiten and S. Smal\o{} that an Artin algebra  with the property that its finitely generated indecomposable modules are up to isomorphism completely determined by theirs composition factors is of finite representation type. Examples of rings with this property are the semisimple artinian rings and the rings of the form $\mathbb{Z}_n$. An affirmative answer is obtained for some special cases, namely, the commutative, the hereditary and the radical square zero case. 
	 \end{abstract}
	
	
	\maketitle
	
	\section{Introduction and preliminaries}
	
	Throughout, all the rings will be associative and unital and all the modules, unless otherwise specified, will be right modules. Let $R$ be a ring and $M$ a finite length $R$-module  with composition series	$0\subseteq M_1\subseteq M_2\subseteq...\subseteq M_n=M$. We say that a simple $R$-module $S$ is a composition factor of $M$ if there exists some $j$ s.t. $S\cong M_{j+1}/M_j$, and we denote by $m_{S}(M)$ the multiplicity of $S$ as a composition factor of $M$.  Then we  say that the finite length indecomposable modules over $R$ are completely determined by composition factors if for any two finite length indecomposable modules $M,N$ s.t. $m_{S}(M)=m_{S}(N)$ for all simple $R$-modules $S$ we have that $M\cong N$. By abbreviation we will say that $R$ satisfies the property $\mathfrak{X}$, or just that $R$ satisfies $\mathfrak{X}$,  if the above condition is satisfied.
	\medskip

	Recall that if $R$ is artinian then $R$ has, up to isomorphism, only finitely many simples, say $S_1,...,S_n$, and over artinian rings  the class of the finite length modules coincides with the class of the finitely generated ones. Therefore, given a finitely generated $R$-module $M$, we can associate to it the following $n$-tuple: 
	\begin{equation*}
			\mathfrak{S}(M):=(m_{S_1}(M), m_{S_2}(M),...,m_{S_n}(M))
	\end{equation*}
 In this way we can say,  for any two finitely generated $R$-modules $M,N$, that $m_{S}(M)=m_{S}(N)$ ~ $\forall S$~ simple $R$-module iff $\mathfrak{S}(M)=\mathfrak{S}(N)$. And so $R$ satisfies $\mathfrak{X}$ iff $\mathfrak{S}(M)=\mathfrak{S}(N)$ implies that $M\cong N$ whenever $M$ and $N$ are finitely generated indecomposable $R$-modules. Notice that this is equivalent to saying that two finitely generated indecomposable modules have the same image in the Grothendieck group $K_0(R)$ if and only if they are isomorphic.
	\medskip

	Let $R$ be a commutative artinian ring. A ring $\Lambda$ is called an Artin $R$-algebra if it is a finitely generated algebra over $R$. $\Lambda$ is  said to be of finite representation type if, up to isomorphism, there is only a finite number of pairwise non-isomorphic finitely generated indecomposable modules. It was conjectured in \cite[p. 420]{AR} that, if $\Lambda$ is an Artin $R$-algebra that satisfies $\mathfrak{X}$, then $\Lambda$ must be of finite representation type. In here we will prove three cases of this conjecture: commutative, right hereditary and radical square zero.
	\medskip
	
	We will start with the commutative case in the section 2. First we will see that commutative artinian serial rings, which are precisely the commutative artinian rings of finite representation type, satisfy $\mathfrak{X}$. Then, using results in \cite{AR}  and \cite{F} we will be able to prove the converse. Hence, a commutative artinian ring satisfies $\mathfrak{X}$ if and only if it is of finite representation type.
	\medskip
	
	Section 3 will deal with the property $\mathfrak{X}$ for general abelian categories. We will remind the notion of a full exact embedding of abelian categories and explain how it can help to reduce our problem to ``easier" cases.
	\medskip
	
	The next section will be about algebraic bimodules, which have been extensively studied in \cite{R1}. By reformulating some of the results in \cite{R1} we will show that algebraic bimodules satisfy $\mathfrak{X}$ if and only if they are of finite representation type. This, together will the full exact embeddings mentioned above, will help prove the hereditary case.
	\medskip
	
	Sections 5 and 6 will deal completely with the hereditary case. Firstly, we will reduce the problem to studying certain tensor algebras associated to $K$-species, which in some sense are a generalization of path algebras. Once this is done, the theory of $K$-species of tame representation type developed in \cite{DR1} will help us prove that hereditary Artin algebras of infinite representation type do not satisfy $\mathfrak{X}$. This, together with \cite[Ch. VIII  Corollary 2.4]{AR} will give us a characterization of hereditary Artin algebras of finite representation type, say, a hereditary Artin algebra satisfies $\mathfrak{X}$ if and only if it is of finite representation type. This result seems to be known by some experts on the matter but it cannot be found in the literature available for researchers so its statement and proof is given for completeness.
	\medskip
	
	In the last section, the mentioned above results about hereditary algebras together with the stable equivalence developed in \cite[Ch. X]{AR} will help us show that Artin algebras with radical square zero that satisfy $\mathfrak{X}$ are of finite representation type. We will finish by giving a generic counterexample to the converse of this statement.
	\medskip

	\section{Commutative artinian rings}
	
	We start with the following lemma concerning  commutative artinian uniserial rings:

	\begin{lemma}
		Let $R$ be a commutative artinian uniserial ring. Then $R$ satisfies $\mathfrak{X}$.
	\end{lemma}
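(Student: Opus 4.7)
The plan is to classify the finitely generated indecomposable $R$-modules explicitly and then observe that they are pairwise distinguished by their composition factor tuples, whence $\mathfrak{X}$ is immediate.

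First I would unpack the hypotheses. Since $R$ is a commutative artinian uniserial ring, it is local with a unique maximal ideal $\mathfrak{m}$, its ideals form a finite chain
\[
R \supsetneq \mathfrak{m} \supsetneq \mathfrak{m}^2 \supsetneq \cdots \supsetneq \mathfrak{m}^n = 0,
\]
and there is up to isomorphism a single simple module $S := R/\mathfrak{m}$. In particular, the tuple $\mathfrak{S}(M)$ degenerates to the composition length of $M$. Choosing $\pi \in \mathfrak{m} \setminus \mathfrak{m}^2$, the chain condition on ideals forces $(\pi) = \mathfrak{m}$, so $\mathfrak{m}^i = (\pi^i)$ for every $i$, and every cyclic $R$-module is isomorphic to exactly one $R/\mathfrak{m}^i$.

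The central step is to show that the finitely generated indecomposable $R$-modules are precisely the cyclic modules $R/\mathfrak{m}^i$ for $i = 1, \ldots, n$. Each $R/\mathfrak{m}^i$ has endomorphism ring isomorphic to $R/\mathfrak{m}^i$, which is local, and is therefore indecomposable. For the converse I would invoke the classical structure theorem: every finitely generated module over a commutative artinian uniserial (principal ideal) ring decomposes as a finite direct sum of cyclic modules $R/\mathfrak{m}^i$. A direct proof can be given by induction on composition length, using the fact that $R$ is quasi-Frobenius (its socle $\mathfrak{m}^{n-1}$ is simple, so $R$ is self-injective) to split off, from an arbitrary finitely generated $M$, a cyclic summand $xR \cong R/\mathfrak{m}^k$ where $k$ is maximal with $\pi^{k-1}M \ne 0$. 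This splitting-off step is the main technical obstacle, but it is robust and well documented in the literature on commutative chain rings.

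The conclusion is then immediate. The filtration $R/\mathfrak{m}^i \supsetneq \mathfrak{m}/\mathfrak{m}^i \supsetneq \cdots \supsetneq 0$ shows that $R/\mathfrak{m}^i$ has exactly $i$ composition factors, all isomorphic to $S$, so $\mathfrak{S}(R/\mathfrak{m}^i) = (i)$. Since these one-tuples are pairwise distinct for $i = 1, \ldots, n$, distinct finitely generated indecomposable $R$-modules have distinct composition factor tuples, and $R$ satisfies $\mathfrak{X}$.
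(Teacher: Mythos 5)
Your proposal is correct and follows essentially the same route as the paper's proof: identify the finitely generated indecomposables with the cyclic modules $R/\mathfrak{m}^i$, then note that since there is only one simple module, the composition-factor tuple reduces to the length, which distinguishes these modules. The paper merely cites the classification of indecomposables as well known, whereas you sketch the induction/self-injectivity argument behind it, but the structure of the argument is the same.
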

	\begin{proof}

		It is well-known that all the indecomposable modules over a commutative artinian uniserial ring $R$ are just all the non-zero cyclic $R$-modules, and so all of them are of the form $R/I$ for some ideal $I$ of $R$.  Therefore, since $R$ has at most one ideal for every length because of uniseriality, we have that $I\neq I'$ implies $l(R/I)\neq l(R/I')$, and so there is at most one indecomposable for every length. Hence, commutative  artinian uniserial rings satisfy $\mathfrak{X}$. 
	\end{proof}
	\medskip
	
	It is easy to check that $\mathfrak{X}$ behaves well under finite products:
	
	\begin{lemma}
		Let $R_1,...,R_n$ be rings. Then $R=\prod\limits_{i=1}^n R_i$ satisfies $\mathfrak{X}$ if and only if each $R_i$ satisfies $\mathfrak{X}$.
	\end{lemma}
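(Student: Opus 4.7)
The plan is to use the standard decomposition of modules over a finite product of rings. Writing $R = \prod_{i=1}^n R_i$, let $e_i \in R$ denote the central idempotent corresponding to the $i$-th factor, so that $1 = e_1 + \cdots + e_n$ and $e_i e_j = 0$ for $i \neq j$. First I would recall that for any $R$-module $M$ one has the canonical decomposition $M = \bigoplus_{i=1}^n M e_i$, where $M e_i$ is naturally an $R_i$-module, and every $R$-module homomorphism respects this decomposition. From this it follows that the finitely generated indecomposable $R$-modules are precisely the modules of the form $M_i$, where $M_i$ is a finitely generated indecomposable $R_i$-module viewed as an $R$-module via the projection $R \twoheadrightarrow R_i$. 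Similarly, the simple $R$-modules are exactly the simple $R_i$-modules viewed as $R$-modules, and the sets of simples for different factors are disjoint.

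Next I would observe that if $M$ is an indecomposable $R$-module coming from an indecomposable $R_i$-module, then the composition factors of $M$ as an $R$-module coincide with its composition factors as an $R_i$-module, since all simple $R_j$-modules for $j \neq i$ are annihilated by $e_i$ and thus cannot appear as subquotients of $M$. In particular, the ``composition vector'' $\mathfrak{S}(M)$ has support contained in the block of coordinates corresponding to the simples of $R_i$.

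With these preliminaries the two implications are essentially immediate. For $(\Rightarrow)$, assume $R$ satisfies $\mathfrak{X}$ and fix $i$; given finitely generated indecomposable $R_i$-modules $M, N$ with $m_S(M) = m_S(N)$ for every simple $R_i$-module $S$, the observation above shows that, viewed as $R$-modules, $\mathfrak{S}(M) = \mathfrak{S}(N)$. Since both remain indecomposable as $R$-modules, $\mathfrak{X}$ for $R$ gives $M \cong N$ as $R$-modules, and hence as $R_i$-modules. For $(\Leftarrow)$, assume each $R_i$ satisfies $\mathfrak{X}$ and let $M, N$ be finitely generated indecomposable $R$-modules with $\mathfrak{S}(M) = \mathfrak{S}(N)$. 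Each is supported on a unique factor, and since their composition-factor supports coincide, both are supported on the same $R_i$; then they are indecomposable $R_i$-modules with equal composition multiplicities, so $M \cong N$ as $R_i$-modules and therefore as $R$-modules.

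There is no real obstacle in this proof; it is a routine bookkeeping argument. The only point that needs to be stated carefully is the identification of the simple $R$-modules with the simple modules of the individual factors, so that the equality $\mathfrak{S}(M) = \mathfrak{S}(N)$ forces $M$ and $N$ to live on the same block.
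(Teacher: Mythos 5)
Your proof is correct, and since the paper leaves this lemma without a written proof (it is introduced with the phrase \enquote{it is easy to check}), there is nothing in the paper to diverge from: what you write is the standard argument one would supply. The decomposition $M = \bigoplus_i M e_i$ via the central idempotents, the resulting identification of indecomposables and simples of $R$ with those of the individual factors, and the observation that composition-factor supports force an indecomposable to live on a single block, together give both implications cleanly.

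One small cosmetic remark: the statement, as written in the paper, is for arbitrary rings $R_1,\dots,R_n$, and the property $\mathfrak{X}$ is defined in terms of \emph{finite length} indecomposable modules, not finitely generated ones (the two notions coincide only once one restricts to artinian rings, which the paper does later). You phrase the argument throughout in terms of finitely generated modules; it would be slightly more faithful to the statement to say \enquote{finite length} instead. Nothing in the argument changes, since every step (the $e_i$-decomposition, the matching of simples, the preservation of composition factors) holds verbatim for finite length modules, but it is worth aligning the wording with the definition actually in force.
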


	And we get the following:
	
	\begin{theorem}
		Any ring which is a finite product of commutative artinian uniserial rings satisfies $\mathfrak{X}$.
	\end{theorem}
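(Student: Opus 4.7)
The theorem is an immediate combination of the two preceding lemmas, so there is essentially no obstacle and little room for creativity. My plan is simply to apply Lemma~2.2 first and then Lemma~2.1.

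More precisely, I would start from a ring $R$ of the form $R = \prod_{i=1}^{n} R_i$, where each $R_i$ is a commutative artinian uniserial ring. By Lemma~2.1, each factor $R_i$ satisfies $\mathfrak{X}$. By Lemma~2.2, the fact that every $R_i$ satisfies $\mathfrak{X}$ implies that the finite product $R = \prod_{i=1}^{n} R_i$ also satisfies $\mathfrak{X}$. This already completes the proof.

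The only thing worth commenting on is that this is precisely the characterization one expects: commutative artinian serial rings are, up to isomorphism, exactly the finite products of commutative artinian uniserial rings, so the theorem really says that commutative artinian serial rings satisfy $\mathfrak{X}$. The hard part, if any, has been pushed into Lemma~2.1 (where one uses that indecomposables over a commutative artinian uniserial ring are cyclic, hence classified by their length) and Lemma~2.2 (where one uses that indecomposable modules over a finite product decompose according to the factors). Both have been established, so no further work is required.
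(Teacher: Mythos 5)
Your proof is correct and is exactly the argument the paper intends: the theorem is stated without proof immediately after Lemmas 2.1 and 2.2, and the claim follows by applying Lemma 2.1 to each factor $R_i$ and then Lemma 2.2 to the product.
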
 
	\medskip

	Commutative rings which are finite products of commutative artinian uniserial rings are called commutative artinian serial rings and they are characterized in the following way:
	\begin{proposition}
		
		\cite[ Proposition 25.4.6.B]{F} Let $R$ be a 
		commutative artinian ring. Then the following are equivalent:
		\begin{itemize}
			\item[a)] $R$ is serial.
		\item[b)] $R$ is a PIR (i.e. every ideal is principal).
		\item[c)]$R/I$ is self-injective for every proper ideal $I$ of $R$.
		\item[d)] $R$ is of finite representation type.
		\end{itemize}
		
	\end{proposition}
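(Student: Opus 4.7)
The plan is to reduce to the local case and then establish each implication. Every commutative artinian ring decomposes uniquely as a finite product $R = \prod_{i=1}^{n} R_{i}$ of local artinian rings, and all four conditions are stable under this decomposition: (a) by definition, (b) because ideals of a product are products of ideals, (c) because both self-injectivity and passage to quotients respect finite products, and (d) by Lemma 2.2. Thus I may assume $R$ is local with nilpotent maximal ideal $\mathfrak{m}$ and residue field $k = R/\mathfrak{m}$.

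For (a) $\Leftrightarrow$ (b) in the local case: assuming (a), uniseriality forces $\mathfrak{m}/\mathfrak{m}^{2}$ to be one-dimensional over $k$ (otherwise some ideal would fit strictly between $\mathfrak{m}^{2}$ and $\mathfrak{m}$), so Nakayama gives $\mathfrak{m} = (\pi)$, and then each $\mathfrak{m}^{i} = (\pi^{i})$ is principal. Conversely, with $\mathfrak{m} = (\pi)$, for any nonzero ideal $I$ the nilpotence of $\mathfrak{m}$ lets one pick the largest $n$ with $I \subseteq \mathfrak{m}^{n}$; any $x \in I \setminus \mathfrak{m}^{n+1}$ writes as $x = u\pi^{n}$ with $u$ a unit, forcing $I = \mathfrak{m}^{n}$, so the ideals form the chain $R \supsetneq \mathfrak{m} \supsetneq \mathfrak{m}^{2} \supsetneq \cdots \supsetneq 0$.

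For (a) $\Rightarrow$ (d): uniseriality implies, as already noted in the proof of Lemma 2.1, that every finitely generated indecomposable $R$-module is cyclic of the form $R/\mathfrak{m}^{i}$, so there are only finitely many indecomposables up to isomorphism. For (a) $\Leftrightarrow$ (c): each quotient of a uniserial local artinian ring is again uniserial local artinian with simple socle $\mathfrak{m}^{i-1}/\mathfrak{m}^{i}$, and simple socle is the standard criterion for self-injectivity of a commutative artinian local ring (equivalently, Gorenstein of dimension zero). Conversely, if every $R/I$ is self-injective, then the socle of $R/I$ is simple for each proper $I$; this says every proper ideal is covered by a unique ideal in the lattice, and iterating this uniqueness of covers forces the ideal lattice to be a chain.

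The main obstacle is (d) $\Rightarrow$ (a), which cannot be handled by commutative algebra alone. The plan is to argue by the contrapositive: if $R$ is local but not uniserial, then $\dim_{k} \mathfrak{m}/\mathfrak{m}^{2} \geq 2$, and reducing modulo $\mathfrak{m}^{2}$ gives a quotient isomorphic to $k[x_{1},\ldots,x_{d}]/(x_{1},\ldots,x_{d})^{2}$ with $d \geq 2$; this algebra is well known (via the Kronecker-type classification of modules annihilated by the square of the radical) to carry infinitely many non-isomorphic indecomposable modules, and each of them inflates to an indecomposable $R$-module, contradicting (d). Together with (b) $\Rightarrow$ (a) above, this closes the cycle. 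The factorwise reduction at the start is essential precisely because this representation-theoretic obstruction must be isolated inside a single local factor.
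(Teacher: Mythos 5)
The paper does not prove this proposition at all; it simply cites it from Faith's \emph{Algebra II}, so there is no internal argument to compare against. Your reduction to the local case, your verification that each of (a)--(d) is stable under finite products, and your handling of the cycle $(a)\Leftrightarrow(b)$, $(a)\Rightarrow(d)$, $(a)\Leftrightarrow(c)$ are all sound. Note, though, that Lemma 2.2 concerns the property $\mathfrak{X}$, not finite representation type; the fact you need (finite type passes to and from finite products of rings) is true and elementary, but it is not what Lemma 2.2 says.

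The genuine gap is in $(d)\Rightarrow(a)$. You claim that when $R$ is local artinian with $\dim_k \mathfrak{m}/\mathfrak{m}^2 = d \geq 2$, the quotient $R/\mathfrak{m}^2$ is isomorphic to $k[x_1,\ldots,x_d]/(x_1,\ldots,x_d)^2$. This is false unless $R/\mathfrak{m}^2$ happens to contain a copy of its residue field. A concrete counterexample is $R = \mathbb{Z}[x]/(p^2,\, px,\, x^2)$: here $\mathfrak{m}=(p,x)$ satisfies $\mathfrak{m}^2=0$ and $\dim_{\mathbb{F}_p}\mathfrak{m}=2$, but $R$ has characteristic $p^2$ and therefore cannot be an $\mathbb{F}_p$-algebra, so it is not isomorphic to $\mathbb{F}_p[x,y]/(x,y)^2$. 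The conclusion you want --- that such an $R/\mathfrak{m}^2$ has infinitely many indecomposable modules, which then inflate to pairwise non-isomorphic indecomposable $R$-modules --- is still correct, but the reason cannot be a ring isomorphism. The clean fix is the bimodule/stable-equivalence argument that the paper itself uses later (Section 7 together with Section 4): a commutative local artinian ring $A$ with $\mathfrak{n}^2=0$ is stably equivalent to the triangular matrix ring $\bigl(\begin{smallmatrix} k & \mathfrak{n}\\ 0 & k\end{smallmatrix}\bigr)$ with $k=A/\mathfrak{n}$, and since $A$ is commutative the two $k$-dimensions of $\mathfrak{n}$ coincide and equal $d$, so $d\cdot d\geq 4$ once $d\geq 2$ and the bimodule is of infinite type. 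This requires no identification of $A$ with a $k$-algebra and closes the cycle; it also makes precise your phrase ``Kronecker-type classification,'' which as written is doing the work of a theorem that your isomorphism claim does not actually deliver.
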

	
	\begin{corollary}
		Let $R$ be  commutative artinian ring of finite representation type. Then $R$ satisfies $\mathfrak{X}$.
	\end{corollary}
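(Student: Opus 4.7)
The corollary is essentially an immediate consequence of the results assembled just before it, so the plan is simply to chain them together. First I would invoke the equivalence (a) $\Leftrightarrow$ (d) of Proposition 2.4 (from \cite[Proposition 25.4.6.B]{F}): since $R$ is a commutative artinian ring of finite representation type, it must be serial, that is, $R$ is isomorphic to a finite product $\prod_{i=1}^{n} R_i$ of commutative artinian uniserial rings.

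Next, I would apply Theorem 2.3 directly to this decomposition to conclude that $R$ satisfies $\mathfrak{X}$. Unpacking what this reference already does: the first lemma shows each factor $R_i$ satisfies $\mathfrak{X}$, because every indecomposable $R_i$-module is cyclic of the form $R_i/I$ and the uniserial structure forces distinct ideals to yield quotients of distinct lengths, so the composition factor data (which determines the length) determines the module up to isomorphism. The second lemma then transfers $\mathfrak{X}$ across the finite product.

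There is no real obstacle here; the work has been done in the preceding lemmas and the cited structural proposition. The only thing to note for clarity is that one should make explicit the direction of Proposition 2.4 being used, namely (d) $\Rightarrow$ (a), which is the nontrivial structural input. The corollary itself only requires citing the hypotheses and the already-proved Theorem 2.3.
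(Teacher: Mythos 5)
Your proposal is correct and follows exactly the route the paper intends: Proposition 2.4 gives (d) $\Rightarrow$ (a), so $R$ is a finite product of commutative artinian uniserial rings, and Theorem 2.3 then yields that $R$ satisfies $\mathfrak{X}$. This is the same (implicit) argument the paper uses, so nothing further is needed.
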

	\bigskip
	
	Our aim now is to prove the converse Corollary 2.5. In order to do that we need the following result:
	\begin{theorem} 	\cite[Ch. II Theorem 3.1]{AR} Let $M$ be a finitely generated module over a commutative artinian ring $R$ and call $E:=E_1\oplus ... \oplus E_n$, where $E_i$ is the injective hull of the simple $S_i$ and $S_1,...,S_n$ are all the simple $R$-modules up to isomorphism. Then the multiplicity of every $S_i$ as a composition factor of $M$ is the same than the multiplicity as a composition factor of $\Hom_R(M, E)$.
	\end{theorem}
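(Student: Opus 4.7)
The plan is to prove this by establishing two properties of the contravariant functor $D := \Hom_R(-, E)$ and then running an easy induction on composition length. First, since $E$ is a finite direct sum of injective modules, it is itself injective, so $D$ is an exact functor: any short exact sequence $0\to A\to B\to C\to 0$ of finitely generated $R$-modules yields a short exact sequence $0\to D(C)\to D(B)\to D(A)\to 0$. Second, I would show that $D$ fixes the isomorphism class of each simple module, namely $D(S_i)\cong S_i$ for every $i=1,\ldots,n$. Granted these two facts, the conclusion follows by induction on the length of $M$: given a short exact sequence $0\to S\to M\to M'\to 0$ with $S$ a simple composition factor, applying $D$ produces $0\to D(M')\to D(M)\to D(S)\to 0$, and by the inductive hypothesis the composition factors of $D(M')$ agree in multiplicity with those of $M'$, while $D(S)\cong S$. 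Concatenating, $D(M)$ has the same multiset of composition factors as $M$.

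The only real content is thus the computation $D(S_i)\cong S_i$. Since $R$ is commutative artinian, each simple has the form $S_i\cong R/\mathfrak{m}_i$ for a maximal ideal $\mathfrak{m}_i$, and for any $R$-module $X$ there is a natural isomorphism
\[
\Hom_R(R/\mathfrak{m}_i,X)\;\cong\;\{x\in X : x\mathfrak{m}_i=0\},
\]
the latter being the $\mathfrak{m}_i$-socle of $X$, with its obvious $R$-module structure. Applied to $X=E=E_1\oplus\cdots\oplus E_n$, this breaks as a direct sum over $j$ of the $\mathfrak{m}_i$-socles of $E_j$. Because $E_j$ is the injective hull of $S_j$, its socle is precisely $S_j$; thus the $\mathfrak{m}_i$-socle of $E_j$ is $S_j$ if $j=i$ and zero otherwise. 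This gives $D(S_i)\cong S_i$ as an $R$-module.

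The main obstacle, such as it is, lies in the simple-module identification $D(S_i)\cong S_i$: one must check both that the different $E_j$'s do not contribute (via socle considerations) and that the contribution of $E_i$ is exactly one copy of $S_i$ as an $R$-module rather than, say, a larger piece of $E_i$. Once this is pinned down, exactness of $D$ and the inductive argument above complete the proof without further difficulty. (I note in passing that this argument is really the standard Matlis duality on a commutative artinian ring, so alternatively one could cite that $D$ is a duality of finite length modules with $D\circ D\cong\mathrm{id}$ and derive the statement as a formal consequence.)
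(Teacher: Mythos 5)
The paper does not actually prove this theorem --- it is quoted directly from \cite[Ch.\ II Theorem 3.1]{AR} --- so there is no in-paper argument to compare against. Your proof is nonetheless correct and self-contained: $D=\Hom_R(-,E)$ is exact because $E$ is injective, and the socle computation correctly yields $D(S_i)\cong S_i$ (the $\mathfrak{m}_i$-annihilated submodule of $E_j$ is a module over the field $R/\mathfrak{m}_i$, hence semisimple isotypic of type $S_i$, hence contained in the socle of $E_j$, which is $S_j$; this forces it to vanish for $j\neq i$ and to be exactly $S_i$ for $j=i$). Together with the induction on composition length, this is precisely the standard Matlis-duality argument and is the same mechanism underlying the cited statement in \cite{AR}.
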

	
 We can prove now the following:
	\begin{proposition}
		Let $R$ be a commutative artinian ring satisfying $\mathfrak{X}$. Then $R$ is self-injective.
	\end{proposition}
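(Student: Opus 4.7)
The plan is to show that each indecomposable direct summand of the regular module $R_R$ is isomorphic to the injective envelope of a simple, using Theorem 2.6 to match composition factor multiplicities and then invoking $\mathfrak{X}$ to upgrade this to an isomorphism. Since a finite direct sum of injectives is injective, this will give self-injectivity of $R$.

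First I would use the structure theory of commutative artinian rings to decompose $R=R_1\times\cdots\times R_n$ as a finite product of local commutative artinian rings, each $R_i$ having a unique simple module $S_i$. Let $e_i$ be the corresponding primitive central idempotent, so $R_i=e_iR$ as an $R$-module. Each $R_i$ is then a finitely generated indecomposable $R$-module (being local as a ring) whose composition factors are all copies of $S_i$. Similarly, the injective envelope $E_i$ of $S_i$ is annihilated by every $e_j$ with $j\ne i$, hence is an $R_i$-module whose composition factors over $R$ are all copies of $S_i$; as such it is finitely generated indecomposable.

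Next I compute $\Hom_R(R_i,E)$ with $E=E_1\oplus\cdots\oplus E_n$ as in Theorem 2.6. For $j\ne i$ any $\varphi\colon R_i\to E_j$ satisfies $\varphi(e_i)=e_i\varphi(e_i)\in e_iE_j=0$, and since $e_i$ generates $R_i$ this forces $\varphi=0$. Hence
\[
\Hom_R(R_i,E)\;\cong\;\Hom_R(R_i,E_i)\;\cong\;\Hom_{R_i}(R_i,E_i)\;\cong\;E_i.
\]
Theorem 2.6 then yields $\mathfrak{S}(R_i)=\mathfrak{S}(E_i)$, and since both are finitely generated indecomposable $R$-modules property $\mathfrak{X}$ forces $R_i\cong E_i$. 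Thus every $R_i$ is injective as an $R$-module, so $R=\bigoplus_i R_i$ is self-injective.

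I do not anticipate a serious obstacle; the only point requiring care is the Hom computation above, which is routine once one observes that modules with disjoint sets of composition factors admit no nonzero homomorphisms. The essential conceptual input is Theorem 2.6: it translates the construction of the injective envelope into a purely combinatorial equality of composition multiplicities, and property $\mathfrak{X}$ then promotes this equality to an honest isomorphism.
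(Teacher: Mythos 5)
Your proof is correct and follows essentially the same route as the paper's: decompose $R$ into its indecomposable projective summands (which for a commutative artinian ring are exactly the local factor rings $R_i=e_iR$), apply $\Hom_R(-,E)$, invoke Theorem 2.6 to match composition factors, and let $\mathfrak{X}$ upgrade the match to an isomorphism forcing each summand to be injective. The only cosmetic difference is that you explicitly identify $\Hom_R(R_i,E)\cong E_i$, whereas the paper simply notes that $\Hom_R(P_i,E)$ is indecomposable (since $\Hom_R(-,E)$ is a duality) and injective (as a summand of $E$) without naming it.
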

	\begin{proof}
		For, let us recall first that we can write $R=\oplus_{i=1}^n P_i$, where each $P_i$ is a projective indecomposable module. Then, applying the duality $Hom_R(-,E)$ to $R$ we get
		\begin{equation}
			E\cong \Hom_R(R,E)=\Hom_R(\oplus_{i=1}^n P_i,E)\cong \oplus_{i=1}^n Hom_R(P_i,E)
		\end{equation}
		and so each $\Hom_R(P_i,E)$ is indecomposable because $\Hom_R(-,E)$ is a duality, and is injective as it is a direct summand of $E$, which is injective. Moreover, by Theorem 2.6 it has the same composition factors as $P_i$ with the same respective multiplicities. But this just means, by property $\mathfrak{X}$, that $P_i\cong \Hom_R(P_i,E)$, and hence each $P_i$ is injective. We conclude, since a finite direct sum of injective modules is always injective, that $R$ is self-injective.
	\end{proof}
	
	We have the following elementary result, whose corollary follows by Proposition 2.7:
	\begin{proposition}
		Let $R$ be a ring satisfying $\mathfrak{X}$ and let $I$ be any proper ideal  of $R$. Then $R/I$ satisfies $\mathfrak{X}$ too.
	\end{proposition}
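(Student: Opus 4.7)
The plan is to reduce the claim for $R/I$ to property $\mathfrak{X}$ for $R$ itself by restriction of scalars along the canonical surjection $\pi\colon R\to R/I$. Suppose $M$ and $N$ are finite length indecomposable $R/I$-modules with $\mathfrak{S}(M)=\mathfrak{S}(N)$ as $R/I$-modules. I regard both as $R$-modules via $\pi$, and show: (a) they are still finite length and indecomposable over $R$; (b) they still have matching multiplicities of composition factors as $R$-modules. Then property $\mathfrak{X}$ for $R$ yields an $R$-isomorphism $M\cong N$, which automatically is $R/I$-linear since both modules are annihilated by $I$.

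For (a), the key observation is that because $I$ annihilates both $M$ and $N$, their $R$-submodules coincide with their $R/I$-submodules. Hence the submodule lattices (and in particular any composition series, or any direct-sum decomposition) are exactly the same whether one works over $R$ or over $R/I$. This immediately gives preservation of finite length, preservation of indecomposability, and the fact that the $R$-composition factors of $M$ are exactly the $R$-modules underlying the $R/I$-composition factors of $M$, and similarly for $N$.

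For (b), recall the standard bijection between isomorphism classes of simple $R/I$-modules and isomorphism classes of simple $R$-modules annihilated by $I$. Under this bijection, by the previous paragraph, the multiplicity of a simple $R/I$-module $S$ in the $R/I$-composition series of $M$ equals the multiplicity of its underlying simple $R$-module in the $R$-composition series of $M$. For a simple $R$-module $T$ not annihilated by $I$, the multiplicity $m_T(M)$ is zero (and likewise for $N$), since every composition factor of $M$ over $R$ must be killed by $I$. Combining these two cases, $\mathfrak{S}(M)=\mathfrak{S}(N)$ holds as $R$-modules as well.

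There is essentially no hard step: the only point to verify carefully is that the composition factor data does not depend on whether we view $M$ as an $R$-module or as an $R/I$-module, and this is just the fact that a module killed by $I$ has the same submodules over $R$ and over $R/I$. The argument is formally independent of $R$ being Artin or commutative, so the result holds for an arbitrary ring $R$ satisfying $\mathfrak{X}$.
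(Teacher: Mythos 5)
Your argument is correct: since $I$ annihilates any $R/I$-module, restriction of scalars along $R\to R/I$ preserves submodule lattices, hence length, indecomposability, and composition-factor multiplicities (with simples not killed by $I$ occurring with multiplicity zero), so $\mathfrak{X}$ for $R$ transfers to $R/I$. The paper states this proposition as elementary and gives no written proof, and your restriction-of-scalars argument is exactly the standard one it has in mind, valid for arbitrary rings as the statement requires.
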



	\begin{corollary}
		If $R$ is a commutative artinian  ring satisfying $\mathfrak{X}$ and $I$ is any proper  ideal of $R$, then $R/I$ is self-injective.
	\end{corollary}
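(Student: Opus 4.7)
The plan is to combine the two preceding results (Proposition 2.7 and Proposition 2.8) in the obvious way. The conclusion is essentially immediate once one observes that the class of commutative artinian rings satisfying $\mathfrak{X}$ is closed under taking quotients by proper ideals.

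First I would note that the quotient $R/I$ inherits the relevant structural hypotheses: since $R$ is commutative and artinian and $I$ is an ideal of $R$, the quotient $R/I$ is again a commutative artinian ring. Next, I would invoke Proposition 2.8 to deduce that $R/I$ satisfies $\mathfrak{X}$, since $R$ satisfies $\mathfrak{X}$ by hypothesis. At this point $R/I$ is a commutative artinian ring satisfying $\mathfrak{X}$, so the hypotheses of Proposition 2.7 are met for $R/I$ in place of $R$.

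Finally, applying Proposition 2.7 to the ring $R/I$ yields that $R/I$ is self-injective, which is exactly the desired conclusion. There is essentially no obstacle here: the only thing one has to check is that both Proposition 2.7 and Proposition 2.8 can be applied in sequence, and this is immediate from the fact that the properties ``commutative'', ``artinian'', and ``satisfies $\mathfrak{X}$'' all pass to quotients by proper ideals.
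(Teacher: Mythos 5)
Your argument is correct and is exactly the route the paper intends: Proposition 2.8 gives that $R/I$ satisfies $\mathfrak{X}$, and since $R/I$ is again commutative artinian, Proposition 2.7 applied to $R/I$ yields self-injectivity. No gaps.
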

	
	
	Thus, by Proposition 2.4 and Corollary 2.9 we get:
	
	\begin{theorem}
		Let $R$ be a commutative artinian ring, then the following are equivalent:\begin{itemize}
			\item[a)] $R$ is serial.
			\item[b)] $R$ is a PIR. 
			\item[c)]$R/I$ is self-injective for every proper ideal $I$ of $R$.
			\item[d)] $R$ is of finite representation type.
			\item[e)] $R$ satisfies $\mathfrak{X}$.
		\end{itemize}
	\end{theorem}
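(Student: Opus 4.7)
The plan is to observe that Proposition 2.4 has already established the equivalence of conditions (a), (b), (c), and (d), so all that remains is to insert (e) into this chain. Concretely, I will prove the two implications (d) $\Rightarrow$ (e) and (e) $\Rightarrow$ (c), which together with Proposition 2.4 will close the loop.

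For the implication (d) $\Rightarrow$ (e), there is essentially nothing to do: this is exactly the content of Corollary 2.5, which says that a commutative artinian ring of finite representation type satisfies $\mathfrak{X}$. So I would dispatch this direction by a single citation.

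For the converse direction, the natural target among the equivalent conditions is (c), since we already have at our disposal exactly the ingredients needed to reach self-injectivity of every proper quotient. Given $R$ satisfying $\mathfrak{X}$ and a proper ideal $I \subseteq R$, I would first apply Proposition 2.8 to pass the property $\mathfrak{X}$ down to the quotient $R/I$, which is itself a commutative artinian ring. Then Proposition 2.7, applied to $R/I$ in place of $R$, yields that $R/I$ is self-injective. This is precisely condition (c), and in fact it is already recorded as Corollary 2.9, so the implication (e) $\Rightarrow$ (c) is a one-line invocation of that corollary. Combined with Proposition 2.4 and the previous paragraph, this gives the five-way equivalence.

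The proof contains no genuine obstacle at this stage, because all the analytic content has been absorbed into the earlier statements; the theorem is really a reorganisation step. The one subtlety worth flagging is that it would not suffice, for the implication (e) $\Rightarrow$ (c), to show merely that $R$ itself is self-injective, since (c) requires self-injectivity of \emph{every} proper quotient. This is exactly why Proposition 2.8, which transports $\mathfrak{X}$ to quotients, is indispensable, and it is the conceptual bridge that makes the single-ring statement of Proposition 2.7 strong enough to yield the quotient-wise statement of (c).
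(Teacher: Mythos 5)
Your proposal is correct and matches the paper's argument: the paper derives the theorem directly from Proposition 2.4 together with Corollary 2.5 for (d) $\Rightarrow$ (e) and Corollary 2.9 for (e) $\Rightarrow$ (c), exactly as you do. No differences worth noting.
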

	\bigskip
	\section{Full exact embeddings}
	
	In any abelian category Jordan-H\"older theorem holds for finite length objects. Thus, it makes sense to talk about property $\mathfrak{X}$ for general abelian categories. We say in that case that an abelian category $A$ satisfies $\mathfrak{X}$ on its finite length objects, or just that $A$ satisfies $\mathfrak{X}$.
	
	\begin{definition}
		
		Let $A$, $B$ and $C$ be abelian categories, with $B$ subcategory of $A$, and let $F:C\rightarrow A$ be a functor Then:
		\begin{itemize}
			\item $B$ is a full subcategory of $A$ if for any two objects $X,Y$ in $B$ we have $Hom_B (X,Y)=Hom_A (X,Y)$.
			\item $B$ is an exact subcategory  of $A$ if the inclusion functor from $B$ into $A$ preserves exact sequences (i.e. it is an exact functor). In other words, if $0\rightarrow X\rightarrow Y\rightarrow Z\rightarrow 0$ is a short exact sequence in $B$, then it is also an exact sequence in $A$.
			\item $F$ is called a full exact embedding if it induces an equivalence of (abelian) categories $C\cong F(C)$, and $F(C)$ is a full and exact subcategory of $A$.
		\end{itemize}
	\end{definition}
	\bigskip
	
	Notice that such a functor may not preserve the length of the objects since it may not take simple objects to simple objects. But what is important for us is the following:
	
	\begin{proposition}
		Let $A$ and $B$ be  abelian categories in which every object has finite length. Assume that $B$ does not satisfy  $\mathfrak{X}$  and that there exists a full exact embedding $F:B\rightarrow A$ of abelian categories. Then $A$ does not satisfy $\mathfrak{X}$.
	\end{proposition}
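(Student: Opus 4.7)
The plan is to take witnesses to the failure of $\mathfrak{X}$ inside $B$ and to transport them along $F$ to obtain witnesses inside $A$. I would begin by choosing finite length indecomposable objects $M$ and $N$ of $B$ with $m_S(M)=m_S(N)$ for every simple $S$ of $B$ but $M\not\cong N$, which exist because $B$ fails $\mathfrak{X}$. The goal is then to show that $F(M)$ and $F(N)$ are non-isomorphic finite length indecomposable objects of $A$ having the same composition factor multiplicities in $A$; finite length in $A$ is automatic by hypothesis.

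The first ingredient is that $F$ is fully faithful, since by definition it factors as an equivalence $B\to F(B)$ followed by the inclusion of a full subcategory into $A$. From this I would deduce two things at once: $\End_A(F(M))\cong\End_B(M)$ is local (Fitting's lemma applies because $M$ has finite length and is indecomposable), so $F(M)$ is indecomposable in $A$, and similarly for $F(N)$; and any isomorphism $F(M)\to F(N)$ in $A$ would lie in $F(B)$, hence be the image of an isomorphism $M\to N$ in $B$, so $F(M)\not\cong F(N)$.

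The heart of the argument is the additivity formula
\begin{equation*}
m_T(F(X))=\sum_{S} m_S(X)\, m_T(F(S)),
\end{equation*}
valid for every finite length object $X$ of $B$ and every simple object $T$ of $A$, where $S$ runs over the isomorphism classes of simple objects of $B$. I would prove it by taking a composition series of $X$ in $B$, applying the exact functor $F$ to obtain a filtration of $F(X)$ in $A$ whose successive quotients are of the form $F(S)$, and refining each such $F(S)$ into its own composition series in $A$; Jordan--H\"older in $A$ then gives the displayed equality. Crucially, the coefficients $m_T(F(S))$ depend only on $F$ and on $S$, not on $X$, so the hypothesis $m_S(M)=m_S(N)$ for every simple $S$ of $B$ immediately forces $m_T(F(M))=m_T(F(N))$ for every simple $T$ of $A$.

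Combining these steps, $F(M)$ and $F(N)$ provide non-isomorphic finite length indecomposable objects of $A$ with identical composition multiplicities, so $A$ fails $\mathfrak{X}$. The main subtlety, and the only place where care is needed, is the additivity formula: because $F$ need not send simples of $B$ to simples of $A$, one has to exploit exactness to produce a filtration of $F(X)$ first and only then refine inside $A$ via Jordan--H\"older. Everything else is a formal consequence of full faithfulness or of exactness.
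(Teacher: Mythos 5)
Your proposal is correct and follows essentially the same strategy as the paper: transport a witness pair $M\not\cong N$ along $F$, use full faithfulness to conclude that $F(M),F(N)$ remain indecomposable and non-isomorphic, and use exactness plus Jordan--H\"older to conclude that they have equal composition multiplicities in $A$. The only cosmetic differences are that the paper rules out a nontrivial decomposition of $F(M)$ by transferring an idempotent through $\End_A(F(M))=\End_{F(B)}(F(M))$ rather than invoking Fitting's lemma, and that your additivity formula $m_T(F(X))=\sum_S m_S(X)\,m_T(F(S))$ is a sharper, more explicit version of the paper's informal assertion that $F$ preserves composition-factor multiplicities.
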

	\begin{proof}

		It is clear that the equivalence of abelian categories  $B\rightarrow F(B)$
		preserves no-$\mathfrak{X}$. It remains to show that indeed $A$ does not satisfy $\mathfrak{X}$. For that, recall that a simple object in $F(B)$ may not be simple in $A$, but at least we can find a composition series in $A$ since every object is a finite length object. This means that, if $M$ and $N$ have the same composition factors with the same multiplicities in $B$, then so does $F(M)$  and $F(N)$ in $A$. Hence, we just need to show that $F(M)$ is an indecomposable object in $A$ if $M$ is indecomposable in $B$. We know that $F(M)$ is indecomposable in $F(B)$ and $F(B)$ is a full and exact subcategory of $A$. Assume that $F(M)=T\oplus H$, with $0\neq T,H$ in $A$, and let $\pi$ be the projection of $F(M)$ onto $H$, viewing it as an endomorphism of $F(M)$. Then $\pi^2=\pi$ is a non-trivial idempotent of End$_A(F(M)$). But $F(B)$ is a full subcategory of $A$ and $F(M)$ is an object in $F(B)$, so End$_B(F(M))=$End$_A(F(M))$. This means that $\pi$ is a  morphism in $F(B)$ and so it is a non-trivial idempotent of End$_B(F(M))$. But this just means that $F(M)$ is not indecomposable in $F(B)$, which is a contradiction.
		\end{proof}
	\begin{corollary}
		Let $R,S$ be artinian rings such that there is a full and exact embedding $F:$ mod-$S\rightarrow$ mod-$R$. If $S$ does not satisfy $\mathfrak{X}$, then $R$ does not satisfy $\mathfrak{X}$.
	\end{corollary}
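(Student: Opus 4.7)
The plan is to deduce this as an immediate application of Proposition 3.2. What I need to check is that the hypotheses of that proposition are satisfied by the abelian categories $\text{mod-}S$ and $\text{mod-}R$.

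First I would recall that since $R$ and $S$ are artinian rings, both $\text{mod-}R$ and $\text{mod-}S$ are abelian categories in which every object has finite length: over an artinian ring, every finitely generated module is both noetherian and artinian and hence has a composition series. So the category-theoretic hypotheses of Proposition 3.2 with $A=\text{mod-}R$ and $B=\text{mod-}S$ are automatic from the ring-theoretic hypotheses here.

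Next I would observe that the functor $F:\text{mod-}S\to\text{mod-}R$ is by assumption a full exact embedding of abelian categories, which is exactly the notion defined in Definition 3.1 and used in Proposition 3.2. Combined with the assumption that $S$ does not satisfy $\mathfrak{X}$, meaning that $\text{mod-}S$ does not satisfy $\mathfrak{X}$ in the abelian-category sense, Proposition 3.2 yields at once that $\text{mod-}R$ does not satisfy $\mathfrak{X}$, i.e. that $R$ does not satisfy $\mathfrak{X}$.

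There is no real obstacle here: the content of the statement is entirely absorbed into Proposition 3.2, and the only thing specific to the ring setting is the translation between ``$R$ satisfies $\mathfrak{X}$'' and ``$\text{mod-}R$ satisfies $\mathfrak{X}$ on its finite length objects,'' which is immediate because for an artinian ring the finite length objects of $\text{mod-}R$ are exactly the finitely generated modules. The corollary is therefore just the specialization of Proposition 3.2 to the module categories of artinian rings.
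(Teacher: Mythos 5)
Your proof is correct and is exactly the paper's intended reasoning: Corollary 3.3 is stated without a separate proof precisely because it is the specialization of Proposition 3.2 to module categories over artinian rings, and you correctly supply the only nontrivial verification, namely that over an artinian ring every finitely generated module has finite length.
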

	
	\begin{remark}
		The main argument used in the above proposition is that any full and exact embedding
		~~~~$F:C\rightarrow D$ between abelian categories where every object is of finite length preserves composition factors in the sense that if $A$ and $B$ are finite length objects in $C$ with the same composition factors and the same multiplicities then $F(A)$ and $F(B)$ are finite length objects in $D$ with the same composition factors and the same multiplicities. This means, since $F$ takes indecomposable objects to indecomposable ones and $C$ and $F(C)$ are equivalent as abelian categories, that $C$ satisfies $\mathfrak{X}$ if and only if $F(C)$ satisfies $\mathfrak{X}$. Hence, in particular if $F$ induces an equivalence of categories then $C$ satisfies $\mathfrak{X}$ if and only if $D$ satisfies $\mathfrak{X}$. An important application of this fact is that $\mathfrak{X}$ is a Morita invariant, i.e.\ a property preserved under equivalences of module categories.
	\end{remark}
	
	\section{Algebraic bimodules}
	Let $F$ and $G$ be two division rings, and let $M$ be a $F$-$G$- bimodule (left module over $F$ and right over $G$) which is finite-dimensional over both division rings. In this case, the upper triangular matrix ring  $R_{M}=\big(\begin{smallmatrix}
		F & M\\
		0 & G
	\end{smallmatrix}\big)$  is a finite-dimensional algebra over some field $K$ if and only if there is a field $K$  such that both $F$ and $G$   
	are finite-dimensional (division)  algebras over $K$ and $K$ operates centrally on $M$. If these conditions are satisfied, $M$ is called an algebraic bimodule. Recall that, in this case,  the dimension of $M$ over $K$ is also finite (see \cite{R1} for details).
	\medskip

	We will assume through this section that $M$ is an algebraic bimodule. It can be shown \cite[ Ch. III Proposition 2.2.]{AR} that in this case the category of finitely generated right modules over $R_{M}$ is equivalent to the following (abelian) category $\mathfrak{L}(M)$:
	\begin{itemize}
		\item The objects are the tuples $(V,W,\phi)$ where $V$ is a f.d. right vector space over $F$, $W$ is a f.d. right vector space over $G$ and $\phi: V\otimes_F M\rightarrow W$ is a $G$-linear map. 
		\item A morphism $\alpha: (V,W,\phi)\rightarrow (V',W',\phi')$ is given by an $F$-linear map $\alpha_F:V\rightarrow V'$ and a $G$-linear map $\alpha_G:W\rightarrow W'$ s.t. $\alpha_G\circ \phi=\phi'\circ (\alpha_F\otimes 1)$.
	\end{itemize}
	\medskip
	
	The tuple $X=(V,W,\phi)$ will be called a representation of $M$ and we will denote by $\dim X$  $=(\dim_F V, \dim W_G)$ its dimension vector.
	\bigskip

	If we denote by $F$ the functor from $\mathfrak{L}(M)$ to mod-$R_{M}$ that induces the mentioned above equivalence, we have the following  (see \cite[Ch. III Proposition 2.3 and Proposition 2.5]{AR}):
	\medskip
	\begin{proposition}
		Let $R_{M}$ and $\mathfrak{L}(M)$ be defined as above. Then:
		
		\begin{itemize}
			
			\item The only simple representations of $R_{M}$ are $(F,0,0)$ and $(0,G,0)$.
			\item $F(X)$ is simple in mod-$R_{M}$ if and only if $X$ is simple in $\mathfrak{L}(M)$.
			\item $0\rightarrow X\rightarrow Y\rightarrow Z\rightarrow 0$ is exact in $\mathfrak{L}(M)$ iff
			
$0\rightarrow F(X)\rightarrow F(Y)\rightarrow F(Z)\rightarrow 0$  is exact in mod-$R_{M}$.
		\end{itemize}
	\end{proposition}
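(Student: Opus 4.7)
The plan is to construct the equivalence between $\mathfrak{L}(M)$ and mod-$R_M$ explicitly, and then read off all three bullets from that construction. First I would use the orthogonal idempotents $e_1 = \big(\begin{smallmatrix}1 & 0\\ 0 & 0\end{smallmatrix}\big)$ and $e_2 = \big(\begin{smallmatrix}0 & 0\\ 0 & 1\end{smallmatrix}\big)$ in $R_M$, noting the natural isomorphisms $e_1 R_M e_1 \cong F$, $e_2 R_M e_2 \cong G$, $e_1 R_M e_2 \cong M$, together with $e_2 R_M e_1 = 0$. Given a finitely generated right $R_M$-module $N$, setting $V = Ne_1$, $W = Ne_2$, and defining $\phi\colon V \otimes_F M \to W$ by right multiplication in $R_M$ produces an object of $\mathfrak{L}(M)$. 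A quasi-inverse sends $(V, W, \phi)$ to the $R_M$-module whose underlying abelian group is $V \oplus W$, with action dictated by the $F$- and $G$-structures and by $\phi$. A routine verification shows these functors are mutually inverse equivalences of abelian categories.

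The third bullet is then immediate: a short sequence in $\mathfrak{L}(M)$ is exact if and only if the underlying sequences of $F$- and $G$-vector spaces are exact, and exactness of a sequence in mod-$R_M$ can be checked after multiplying everything by $e_1$ and $e_2$, since $e_1 + e_2 = 1$. Under the equivalence above these two conditions coincide; alternatively one may simply invoke the general fact that any equivalence of abelian categories is automatically exact in both directions.

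For the first bullet I would use that a subobject of $(V, W, \phi)$ in $\mathfrak{L}(M)$ is given by a pair $(V', W')$ with $V'$ an $F$-subspace of $V$, $W'$ a $G$-subspace of $W$, and $\phi(V' \otimes_F M) \subseteq W'$. Since $F$ and $G$ are division rings, the objects $(F, 0, 0)$ and $(0, G, 0)$ admit only the trivial subobjects and are therefore simple. Conversely, if $(V, W, \phi)$ is simple with both $V$ and $W$ nonzero, the pair $(0, W)$ yields a proper nonzero subobject, a contradiction. So either $V = 0$, forcing $W$ to be simple over $G$ and thus $(V, W, \phi) \cong (0, G, 0)$, or $W = 0$, which forces $\phi = 0$ and $V$ to be simple over $F$, giving $(V, W, \phi) \cong (F, 0, 0)$. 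The second bullet is now automatic, since equivalences of categories always preserve simplicity.

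The only (very minor) obstacle is the bookkeeping in the construction of the equivalence itself: one has to pick the conventions so that $\phi$ really lands in $W$ as a $G$-linear map and so that morphisms compose correctly with the tensor product. Once that is in place, the three bullets follow essentially from unwinding definitions.
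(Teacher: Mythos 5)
The paper itself gives no proof of this proposition: it simply cites \cite[Ch.~III Propositions 2.3 and 2.5]{AR}, and the surrounding discussion (including \cite[Ch.~III Proposition 2.2]{AR}) supplies exactly the equivalence you construct. Your argument is correct and is essentially the standard one: reading off $V=Ne_1$, $W=Ne_2$ and the multiplication map $\phi$ from the idempotents $e_1,e_2$, taking $V\oplus W$ with the twisted $R_M$-action as a quasi-inverse, then using that equivalences of abelian categories are automatically exact (which yields the third bullet and the ``$F(X)$ simple iff $X$ simple'' part of the second), and classifying subobjects of $(V,W,\phi)$ directly to get the first bullet. The subobject argument is sound — $(0,W,0)$ is always a subobject, so a simple object must have $V=0$ or $W=0$, and division-ring simplicity of the remaining component finishes it; one might just note explicitly that in the $W=0$ case the codomain forces $\phi=0$, so the constraint $\phi(V'\otimes M)\subseteq W'$ is vacuous and every $F$-subspace $V'$ gives a subobject, which is why $V$ must be one-dimensional. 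Overall there is no gap; your proof supplies the details that the paper outsources to \cite{AR}.
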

	
	\bigskip
	
	Thus,  since $F$ takes simples to simples and exact sequences to exact sequences, the length of $X$ as an object in $\mathfrak{L}(M)$ is the same as the length of the object $F(X)$ in mod-$R_{M}$; and if $X_1,...,X_n$ are the composition factors of $X$ then $F(X_1),...,F(X_n)$ are the composition factors of $F(X)$. Therefore, we can work with the category  $\mathfrak{L}(M)$ instead of mod-$R_{M}$ and identify modules with representations.
	\bigskip
	
	Associated to an algebraic bimodule $M$ we have the following quadratic form $q(X,Y)$ defined on $\mathbb{Q}^2$ as follows:
	
	\begin{equation}
		q(X,Y)=f_1 X^2+ f_2 Y^2- m XY
	\end{equation}
	where $f_1=\dim_{K} F$, $f_2=\dim_{K} G$ and $m=\dim_{K} M$
	\medskip

	It was shown in \cite{DR1}, \cite{DR2} that $R_M$ is of finite-representation type (i.e. it has only finitely many isoclasses of finitely generated indecomposable modules) iff $q$ is positive definite (i.e. $q(X,Y)>0~ \forall (X,Y)\neq (0,0))$. We can interpret this result just in terms of $a=\dim_{G} M$ and $b=\dim_{F} M$ by defining the (usually non-symmetric) bilinear form  $\tilde{q}$ on $\mathbb{Q}^2$ by
	
	\begin{equation}
		\tilde{q}((X_1,Y_1),(X_2, Y_2))=f_1 X_1 X_2+ f_2 Y_1 Y_2- m X_1 Y_2
	\end{equation}
	whose corresponding quadratic form is just $q$. Then $\tilde{q}$ is given (up to scalar multiplication) by the matrix $\big(\begin{smallmatrix}
		a & -ab\\
		0 & b
	\end{smallmatrix}\big)$ which only depends on $a=\dim_{G} M$ and $b=\dim_{F} M$ (see \cite{R1} for details).
	
	\begin{theorem}\cite[p. 281-282]{R1}
		$R_{M}$ is of finite type iff $ab\leq 3$.
	\end{theorem}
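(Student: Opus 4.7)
My plan is to use the criterion stated immediately before the theorem: by the work of Dlab--Ringel, $R_M$ is of finite representation type if and only if the associated quadratic form
\[
q(X,Y) = f_1 X^2 + f_2 Y^2 - m XY
\]
on $\mathbb{Q}^2$ is positive definite. The theorem then reduces to re-expressing positive-definiteness of $q$ purely in terms of $a=\dim_G M$ and $b=\dim_F M$.

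The first step exploits that $K$ acts centrally on $F$, $G$ and $M$, so that $M$ is a finite-dimensional $K$-vector space both through its left $F$-structure and through its right $G$-structure. Pairing a left $F$-basis of $M$ of size $b$ with a $K$-basis of $F$ of size $f_1$ produces a $K$-basis of $M$ of size $f_1 b$; symmetrically, using the right $G$-structure one finds $\dim_K M = f_2 a$. Consequently
\[
m = f_1 b = f_2 a, \qquad m^2 = f_1 f_2 \cdot ab.
\]

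Next I would apply Sylvester's criterion to the Gram matrix $\bigl(\begin{smallmatrix} f_1 & -m/2 \\ -m/2 & f_2 \end{smallmatrix}\bigr)$ of $q$. Since $f_1>0$, positive-definiteness is equivalent to $f_1 f_2 - m^2/4 > 0$, that is $4 f_1 f_2 > m^2$. Substituting $m^2 = f_1 f_2 \cdot ab$ and cancelling the positive factor $f_1 f_2$ yields the clean inequality $ab < 4$; since $a$ and $b$ are non-negative integers, this is the same as $ab\leq 3$, proving the equivalence.

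The argument has no real obstacle beyond correctly producing the identity $m = f_1 b = f_2 a$, which depends essentially on the centrality of $K$ that is built into the very definition of an algebraic bimodule. As a sanity check one should verify the degenerate case $M=0$ (so $ab=0$): here $R_M\cong F\times G$ is semisimple, hence trivially of finite representation type, while $q$ reduces to $f_1 X^2 + f_2 Y^2$, which is manifestly positive definite, so both sides of the equivalence are satisfied.
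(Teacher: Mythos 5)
Your argument is correct and follows essentially the same route the paper (and Ringel) intend: starting from the Dlab--Ringel criterion that $R_M$ is of finite type iff $q$ is positive definite, and using the identity $m=f_1b=f_2a$ (equivalently, the paper's observation that $\tilde q$ is a scalar multiple of $\bigl(\begin{smallmatrix} a & -ab\\ 0 & b\end{smallmatrix}\bigr)$) to reduce positive definiteness to $ab<4$, i.e.\ $ab\leq 3$. Your Sylvester computation is just the explicit form of that reduction, so the two arguments coincide.
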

	\medskip
	\begin{remark}
		If $R_M$ is of infinite representation type  (that is, $ab\geq 4$), then for every $x\in \mathbb{Z}_{\geq 0}^2$ s.t. $q(x)\leq 0$ there exists at least one indecomposable representation $X$ of $M$ with $\dim(X)=x$. Furthermore, we can always find  some $x\in \mathbb{Z}_{\geq 0}^2$ of the form $x=(1,s)$  s.t. $q(x)\leq 0$ (see \cite{R1}).
	\end{remark}
	\medskip
	
	\begin{lemma}\cite [Lemma 3.6]{R1}
		In case $K$ is infinite, together with $ab\geq 4$, for each $x=(1,s)$, with $s\in\mathbb{N}$, s.t. $q(x)\leq 0$ we can find an infinite family $\{X_i\}$ of pairwise non-isomorphic indecomposable representations of $M$  with $\dim X_i =x$. If $K$ is finite, we can find at least two of them. 
	\end{lemma}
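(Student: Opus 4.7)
The plan is to translate the counting of indecomposable representations of dimension $(1,s)$ into the counting of $F^*$-orbits on a Grassmannian, and then to compare $K$-dimensions (for $K$ infinite) or cardinalities (for $K$ finite).

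Since $\dim_F V=1$, one may take $V=F$ and use the canonical isomorphism $F\otimes_F M\cong M$ to rewrite such a representation as a $G$-linear map $\phi:M\to W$ with $\dim_G W=s$. A morphism between two such representations is a pair $(\lambda,\alpha_G)\in F^*\times GL_s(G)$ satisfying $\alpha_G\circ\phi=\phi'\circ L_\lambda$, where $L_\lambda$ denotes left multiplication by $\lambda$ on $M$. The first observation is that $(F,W,\phi)$ is indecomposable if and only if $\phi$ is surjective: any decomposition in $\mathfrak{L}(M)$ must place the one-dimensional $F$-part entirely in a single summand, so nontrivial decompositions correspond to $G$-module splittings $W=W_1\oplus W_2$ with $\mathrm{Im}(\phi)\subseteq W_1$ and $W_2\neq 0$; since $G$ is a division ring, such a splitting exists precisely when $\mathrm{Im}(\phi)\subsetneq W$.

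Next, sending $\phi\mapsto\ker\phi$ yields an $F^*$-equivariant bijection between $GL_s(G)$-orbits of surjections $\phi:M\to G^s$ and points of the Grassmannian $\mathrm{Gr}_G(a-s,M)$ of $(a-s)$-dimensional right $G$-subspaces of $M$, where $F^*$ acts by $\lambda\cdot N=\lambda^{-1}N$. Hence the isomorphism classes of indecomposable $(1,s)$-representations are in bijection with the $F^*$-orbits on $\mathrm{Gr}_G(a-s,M)$.

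Using $m=af_2$, the hypothesis $q(1,s)=f_1+f_2s^2-ms\leq 0$ rewrites as $f_1\leq f_2s(a-s)=\dim_K\mathrm{Gr}_G(a-s,M)$. Moreover, since $K$ is central in $F$ and acts on $M$ by scalars, the subgroup $K^*\subseteq F^*$ fixes every $G$-subspace of $M$, so every $F^*$-orbit on the Grassmannian has $K$-dimension at most $f_1-1$, strictly smaller than $f_2s(a-s)$. For $K$ infinite, since $K$-rational points are Zariski-dense in the Grassmannian and each orbit has strictly smaller dimension, one concludes that there are infinitely many distinct $F^*$-orbits on $K$-points, yielding the desired infinite family of pairwise non-isomorphic indecomposables. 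For $K=\mathbb{F}_q$ finite, Wedderburn's theorem forces $G$ to be a field of order $q^{f_2}$, and the count $|\mathrm{Gr}_G(a-s,M)(K)|=\binom{a}{s}_{q^{f_2}}\geq q^{s(a-s)f_2}\geq q^{f_1}>q^{f_1}-1=|F^*|$ produces at least two orbits.

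The main obstacle will be making the ``infinitely many orbits'' conclusion rigorous in the infinite-$K$ case: the dimension inequality alone does not automatically produce infinitely many $K$-rational orbits, and one needs a geometric quotient argument, for instance Rosenlicht's theorem on rational quotients of algebraic group actions, or a direct argument bounding the size of an orbit's intersection with $\mathrm{Gr}(K')$ in terms of $|F^*(K')|$ for finite subfields $K'\subseteq K$. Once this is available, the remaining steps are essentially bookkeeping around the Grassmannian identification.
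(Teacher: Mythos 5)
The paper states this lemma only by citation to Ringel, so there is no in-text proof to compare against; I assess your argument on its own terms. Your translation is correct: taking $V=F$, a representation with $\dim_F V=1$ is the data of a $G$-linear map $\phi\colon M\to W$ with $\dim W_G=s$; indecomposability is equivalent to surjectivity of $\phi$ (since $G$ is a division ring, a non-surjective $\phi$ splits off a summand $(0,W_2,0)$); and passing to kernels identifies isomorphism classes with $F^*$-orbits on the set $\mathrm{Gr}_G(a-s,M)$ of $(a-s)$-dimensional right $G$-submodules of $M$. Using $m=af_2$, the hypothesis $q(1,s)=f_1-f_2s(a-s)\le 0$ gives $f_1\le f_2s(a-s)$, and since $K^*\subseteq F^*$ acts trivially the relevant group $F^*/K^*$ has $K$-dimension $f_1-1<f_2s(a-s)$. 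The finite-field count, using Wedderburn to identify $G$ with $\mathbb{F}_{q^{f_2}}$ and bounding $|\mathrm{Gr}_G(a-s,M)|$ below by the size of a Schubert cell, is likewise correct.

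The step you flag as the main obstacle is in fact easier to close than you suggest, and no geometric-quotient theorem is needed. Fixing a $G$-module decomposition $M=N_0\oplus N_0'$ with $\dim_G N_0=a-s$, sending $\theta\in\Hom_G(N_0,N_0')$ to its graph identifies an affine space $\mathbb{A}_K^{s(a-s)f_2}$ with a Zariski-open subset of $X:=\mathrm{Gr}_G(a-s,M)$, viewed as a closed $K$-subvariety of the ordinary Grassmannian $\mathrm{Gr}_K((a-s)f_2,m)$ cut out by the $G$-stability condition. Hence $X$ is irreducible of $K$-dimension $s(a-s)f_2$, is $K$-rational, and $X(K)$ is Zariski-dense when $K$ is infinite. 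If $X(K)$ were a finite union of $F^*(K)$-orbits $F^*(K)\cdot N_1,\dots,F^*(K)\cdot N_r$, then $X(K)$ would lie in $\bigcup_i \overline{F^*\cdot N_i}$, a finite union of $K$-closed subvarieties each of dimension at most $\dim(F^*/K^*)=f_1-1<\dim X$, contradicting density of $X(K)$ in $X$. So there are infinitely many orbits, hence infinitely many pairwise non-isomorphic indecomposables of dimension $(1,s)$. With this replacement of the flagged step, your proof is complete.
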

	
	\begin{remark}
		Let $R$ be an artinian ring satisfying $\mathfrak{X}$. $R$ has, up to isomorphism, only finitely many simple modules  $S_1,...,S_r$, and since it satisfies $\mathfrak{X}$ we have that if $M$ is an indecomposable module of length $m$, then $M$ is completely determined by its composition factors  $n_1*S_1,...,n_r*S_r$, with $m=n_1+...+n_r$. In particular this means that, for a fixed length $m$, the number of indecomposable modules of length $m$ is at most $r!$ times the number of ways of decomposing $m$ as a sum of $r$ non-negative integers, and thus finite. It follows  that, over an artinian ring satisfying $\mathfrak{X}$, the number of indecomposables of any given  length $m$ is always finite. 
	\end{remark}
\medskip
	
	Therefore, from Lemma 4.4 and Remark 4.5 one could guess that algebraic bimodules of infinite representation type do not satisfy $\mathfrak{X}$. That is indeed the case, and for proving it we just need to show that the length, after identifying modules with representations, of two representations with the same dimension vector is the same. In order to do that, let us recall the definitions of subrepresentation and quotient representation:
	
	\begin{definition}
		We say that  $(V_1,W_1,\psi)$ is a subrepresentation of $(V,W,\phi)$ if $V_1$ is a subvector space of $V$, $W_1$ is a subvector space of $W$ and the inclusions maps $\iota: V_1\rightarrow V$, $W_1\rightarrow W$ form a morphism of representations. That is to say, in a sense, that $\psi=\left.\phi\right|_{V_1\otimes M}$ upon identifying image in $V\otimes M$.
	\end{definition}
	
	\begin{definition}
		Let $(V_1,W_1,\psi=\left.\phi\right|_{V_1\otimes M})$ be a subrepresentation of $(V,W,\phi)$. Then the quotient representation $(V/V_1,W/W_1, \delta)$ is given by $\delta: V/V_1\otimes M\rightarrow W/W_1$, with $\delta((a+V_1)\otimes m):=\phi(a\otimes m)+W_1 $.
	\end{definition}
	
	\begin{lemma}

		Let $X=(V,W,\phi)$ be a representation of $M$ with $\dim_{F}V=n$ and $\dim W_{G}=m$. Then in any composition series for $X$ the simple representation $(F,0,0)$ appears $n$ times as a composition factor and the simple representation $(0,G,0)$ appears $m$ times. That is, $l(X)=\dim_{F}V+\dim W_{G}$.
	\end{lemma}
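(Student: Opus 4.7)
The plan is to exhibit one explicit composition series of $X$ with exactly $n$ copies of $(F,0,0)$ and $m$ copies of $(0,G,0)$, and then invoke Jordan--Hölder (which holds in the abelian category $\mathfrak{L}(M)$ thanks to Proposition 4.1 identifying it with $\mathrm{mod}\,R_M$) to transfer the count to any composition series.

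The key observation is that $(0,W,0)$ is always a subrepresentation of $X=(V,W,\phi)$: the inclusion $(\alpha_F,\alpha_G)=(0,\mathrm{id}_W)$ trivially satisfies $\alpha_G\circ 0=\phi\circ(0\otimes 1)$. I would first build a filtration of this ``bottom'' piece by choosing a complete $G$-flag $0=W_0\subsetneq W_1\subsetneq\cdots\subsetneq W_m=W$ with $\dim_G W_i=i$. Each $(0,W_i,0)$ is a subrepresentation by the same argument, and the successive quotients $(0,W_i,0)/(0,W_{i-1},0)\cong(0,W_i/W_{i-1},0)\cong(0,G,0)$ are simple by Proposition 4.1.

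Next I would examine the quotient $X/(0,W,0)$. Using Definition 4.7 with $V_1=0$, $W_1=W$, the quotient is $(V,0,\bar\phi)$ with $\bar\phi:V\otimes_F M\to 0$ forced to be zero, so the quotient is $(V,0,0)$. Now picking any complete $F$-flag $0=V_0\subsetneq V_1\subsetneq\cdots\subsetneq V_n=V$, each $(V_i,0,0)$ is a subrepresentation (the condition $\phi(V_i\otimes M)\subseteq 0$ is vacuous since the ambient map is already zero), and the consecutive quotients are $(F,0,0)$. Pulling this filtration back through the projection $X\twoheadrightarrow(V,0,0)$ gives a chain of subrepresentations of $X$ above $(0,W,0)$ whose successive quotients are all $(F,0,0)$.

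Concatenating the two filtrations produces a composition series of $X$ with exactly $m$ factors isomorphic to $(0,G,0)$ and $n$ factors isomorphic to $(F,0,0)$, so $l(X)=n+m$. By Jordan--Hölder the multiplicities are independent of the chosen series, which proves the lemma. There is essentially no obstacle here; the only point requiring mild care is the verification that the candidate inclusions and quotient morphisms satisfy the compatibility $\alpha_G\circ\phi=\phi'\circ(\alpha_F\otimes 1)$, which in both filtrations reduces to a trivial equality because one of the two structure maps involved is zero.
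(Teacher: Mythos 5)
Your proof is correct and follows essentially the same strategy as the paper: exhibit one explicit composition series by first isolating the subrepresentation $(0,W,0)$ (filtered by a $G$-flag into $m$ copies of $(0,G,0)$) and then filtering the $V$-direction (yielding $n$ copies of $(F,0,0)$), with Jordan--Hölder giving the independence of the choice of series. The paper builds the chain top-down by shrinking $V$ one dimension at a time, which is just the preimage of your flag in the quotient $(V,0,0)$, so the two constructions produce the same composition series.
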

	\begin{proof}
		We know by Proposition 4.1 that the only simple representations of $M$ are $(F,0,0)$ and $(0,G,0)$. If $n=0$ then $X=(0,W,0)\cong (0,G,0)^m$ and the result follows. Otherwise, consider the representation $X_1=(V_1,W,\psi)$ where $V_1$ is a subspace of $V$ with dimension $n-1$ and $\psi=\left.\phi\right|_{V_1\otimes M}$ , where $n=\dim_{F}V$. Then clearly $X_1$ is a subrepresentation of $X$  and we have $X/X_1 \cong (F,0,0)$. If $n-1>0$ we repeat the process with $X_1$ and construct  $X_2$ similarly. We continue this process until reaching $X_n=(0,W,\left.\phi\right|_0)=(0,W,0)$. Then clearly if $m=\dim W_{G}$ then $(0,W,0)\cong (0,G,0)^m$. Thus, we can construct a composition series of the form
		\begin{equation}
			0\subseteq (0,G,0)\subseteq (0,G\oplus G,0)\subseteq ... \subseteq X_n \subseteq ... \subseteq X_1\subseteq X
		\end{equation}
		and so $X$ has length $n+m$, which does not depend on the particular choices of $V$, $W$ and $\phi$. 
	\end{proof}
	\bigskip
	\begin{proposition}
		$R_M$ satisfies $\mathfrak{X}$ iff $ab\leq 3$ iff $R_M$ is of finite representation type.
	\end{proposition}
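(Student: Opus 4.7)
The plan is to prove the three-way equivalence by noting that Theorem 4.2 already gives $ab \leq 3 \iff R_M$ is of finite representation type, so only the equivalence of $\mathfrak{X}$ with one of these conditions remains. The central bridge between the combinatorial and categorical sides is Lemma 4.8: the composition factor multiplicities of the two simple representations $(F,0,0)$ and $(0,G,0)$ in a representation $X=(V,W,\phi)$ are precisely $\dim_{F}V$ and $\dim W_{G}$. Equivalently, two representations share the same composition factors with the same multiplicities if and only if they have the same dimension vector. I would state this observation at the very start of the proof, since it is used in both directions.

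Next I would handle the easier direction, $ab \geq 4 \Rightarrow R_M$ does not satisfy $\mathfrak{X}$. By Remark 4.3 there is some $x = (1,s)\in\mathbb{Z}_{\geq 0}^{2}$ with $q(x)\leq 0$, and Lemma 4.4 then provides at least two pairwise non-isomorphic indecomposable representations of $M$ with dimension vector $x$. By the observation above they share composition factors and multiplicities, so $R_M$ fails $\mathfrak{X}$. This direction is essentially free given the section's results.

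For the converse $ab \leq 3 \Rightarrow R_M$ satisfies $\mathfrak{X}$, the plan is to invoke the Dlab-Ringel classification from \cite{DR1}: for the hereditary Artin algebra $R_M$ in finite representation type, the isomorphism classes of indecomposable representations are in bijection with the positive roots of the quadratic form $q$, and this bijection is realized by the dimension vector. Hence two non-isomorphic indecomposables have distinct dimension vectors, which by Lemma 4.8 means distinct composition factor multiplicities; thus any two indecomposables with the same composition factors must be isomorphic.

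The main obstacle is precisely the last step: one must appeal to the full Dlab-Ringel root-system classification rather than derive it from scratch. If a direct appeal is deemed unsatisfactory, the fallback is to enumerate the four rank-two finite-type cases ($ab=0,1,2,3$, corresponding to valued diagrams of type $A_1\times A_1$, $A_2$, $B_2$ (equivalently $C_2$), and $G_2$) and verify by hand that in each short list the indecomposables carry pairwise distinct dimension vectors; but this amounts to the same verification, now packaged as a finite check rather than as a cited theorem.
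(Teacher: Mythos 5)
Your proposal is correct and follows essentially the same route as the paper: both directions hinge on the observation (the length lemma) that two representations share composition factor multiplicities exactly when they share dimension vectors, the $ab\geq 4$ direction then follows from Lemma 4.4, and the $ab\leq 3$ direction follows by citing the classification of indecomposables by dimension vector in finite type. The only cosmetic difference is that the paper invokes \cite[Theorem 3]{R1} (which directly states uniqueness of the indecomposable per dimension vector in the bimodule setting) rather than the root-system formulation from \cite{DR1}, but these are equivalent statements of the same fact.
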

	\begin{proof}
		Lemma 4.6 tells us in particular that two indecomposable representations have the same dimension vector if and only if they have the same composition factors. This just means by Lemma 4.4 that, if $ab\geq 4$, then we can find at least two non-isomorphic indecomposable representations with the same composition factors; and so $R_M$ does not satisfy $\mathfrak{X}$. On the other hand, it can be shown (see \cite[Theorem 3]{R1}) that for finite type ones (i.e. $ab\leq 3$) we can find at most one indecomposable representation for a given dimension vector. Hence, the result follows.
	\end{proof}

	\section{First reductions on the hereditary case}
	
	\medskip
	It was shown in  \cite[Ch. VIII Corollary 2.4]{AR} that hereditary Artin algebras of finite representation type satisfy $\mathfrak{X}$. Our aim is to prove the converse of this statement.
	
	\begin{definition}
		Let $\Lambda$ be an Artin algebra. Then $\Lambda$ is called basic if we can write $\Lambda=P_1\oplus...\oplus P_n$ with $P_i$'s pairwise non-isomorphic projective indecomposable modules. This is equivalent to saying that $\Lambda/J$ is a product of division rings, where $J$ denotes the Jacobson radical.
	\end{definition}
	\begin{remark}
		
		Suppose that $\Lambda$ is an Artin algebra. We can write $\Lambda=P_{1}^{n_1}\oplus...\oplus P_{m}^{n_m}$ with $n_i >0$ for all $i$ and the $P_i$'s being pairwise non-isomorphic indecomposable projective modules. Let $P=P_1\oplus...\oplus P_m$ and $\Gamma=End_{\Lambda}(P)^{op}$. It can be checked that $\Gamma$ is a basic Artin algebra Morita equivalent to $\Lambda$ (see \cite[p. 35-36]{AR}). Hence, since  $\mathfrak{X}$ is a Morita invariant, we can assume that all the Artin algebras that we are dealing with are basic. 
		
	\end{remark}
	\medskip
	\begin{remark}
		Since $\Lambda$ is an artinian ring, we know that any decomposition of $\Lambda$ into a product of indecomposable rings is finite. So, if it is of infinite representation type then at least one of the indecomposable direct factors must be of infinite representation type, and so
		if we want to prove that $\Lambda$ does not satisfy $\mathfrak{X}$, it is enough to show that such an indecomposable direct factor does not. Thus, we may also assume that all the Artin algebras in the sequel are indecomposable.
	\end{remark}
	
	The following result is the first step towards our aim:

	\begin{theorem}\cite[Section 4 Lemma 1]{DR3}
		Let $R$ be an indecomposable basic semiprimary ring (i.e. its Jacobson radical $J$ is nilpotent and $R/J$ is artinian), and assume there exist primitive orthogonal idempotents $g\neq f$ such that $F=fRf$ and $G=gRg$ are division rings. Then there is a full and exact embedding of the category of finite length modules over the upper triangular matrix ring
		$\big(\begin{smallmatrix}
			F & fJg\\
			0 & G
		\end{smallmatrix}\big)$   into mod-$R$.
	\end{theorem}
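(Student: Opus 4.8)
Throughout write $\Lambda$ for the upper triangular matrix ring in the statement and put $M:=fJg$; since $R$ is semiprimary, $J$ is nilpotent, so $M$ is a bimodule of finite length over the division rings $F=fRf$ (on the left) and $G=gRg$ (on the right). As recalled in Section~4, $\operatorname{mod}\text{-}\Lambda$ is equivalent to the representation category of $M$, with indecomposable projectives $P_1=(F,M,\mathrm{id})$, $P_2=(0,G,0)$, and with $\operatorname{rad}P_1=(0,M,0)\cong P_2^{\dim_G M}$. Because $F$ and $G$ are division rings, $\Lambda$ is hereditary, so every finite length $\Lambda$-module $X$ sits in a short exact sequence $0\to Q_1\xrightarrow{\,d\,}Q_0\to X\to 0$ with $Q_0,Q_1$ projective; and since $P_2$ is simple and $\operatorname{soc}P_1=\operatorname{rad}P_1$, one may take $Q_1\in\operatorname{add}(P_2)$. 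I would also record at the outset the normalizations forced by the hypotheses: as $R$ is basic and $f\neq g$ are primitive orthogonal idempotents, $fR\not\cong gR$, hence $\Hom_R(fR,gR)=gJf$, $\Hom_R(gR,fR)=fRg=fJg=M$, and $fJf=gJg=0$ (radicals of division rings); in particular $m\cdot gJf\subseteq fJg\cdot gJf\subseteq fJf=0$ for every $m\in M$.

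The functor. Put $B:=gR/\operatorname{rej}_{fR}(gR)$, the largest quotient of $gR$ cogenerated by $fR$ (if $M=0$ then $\Lambda\cong F\times G$ and one uses instead $B:=gR/\operatorname{tr}_{fR}(gR)$; that case is then formal since $\Lambda$ is semisimple, so assume $M\neq 0$). By construction $B$ embeds into $(fR)^{\dim_G M}$; moreover $\End_R(B)=G$, every map $gR\to fR$ factors through $B$ so that $\Hom_R(B,fR)\cong M$ as $F$-$G$-bimodules, and $\Hom_R(fR,B)=Bf=0$ because $gJf$ is killed by every map $gR\to fR$ (this last point is exactly where $fJf=0$ enters). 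Hence $\End_R(fR\oplus B)\cong\Lambda$, so $T:=fR\oplus B$ is a $\Lambda$-$R$-bimodule with $e_1T=fR$ and $e_2T=B$, and I would set $\Phi:=-\otimes_\Lambda T\colon\operatorname{mod}\text{-}\Lambda\to\operatorname{mod}\text{-}R$. Then $\Phi$ is additive and right exact, $\Phi(P_1)=fR$, $\Phi(P_2)=B$, and $\Phi$ restricts to an equivalence of the projective subcategories $\operatorname{add}\{P_1,P_2\}\xrightarrow{\ \sim\ }\operatorname{add}\{fR,B\}$; in particular $\Phi$ is full and faithful on projectives.

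It remains to prove that $\Phi$ is exact and fully faithful. For exactness, since $\Phi$ is right exact it suffices (applying the horseshoe lemma and the $3\times 3$-lemma to a short exact sequence resolved as in the first paragraph) to show that $\Phi$ sends a monomorphism of projective $\Lambda$-modules to a monomorphism; and since any monomorphism $P_2^a\hookrightarrow P_1^b$ factors as a split monomorphism into $(\operatorname{rad}P_1)^b$ followed by the inclusion $(\operatorname{rad}P_1)^b\hookrightarrow P_1^b$, it is enough to show that $\Phi$ applied to $\operatorname{rad}P_1\hookrightarrow P_1$ is a monomorphism, i.e. that the evaluation map $\mathrm{ev}\colon\Hom_R(B,fR)\otimes_G B\to fR$ is injective. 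This is the heart of the matter, and the step I expect to require the most work. Unwound, it asks: if $m_1,\dots,m_r$ is a $G$-basis of $M=fJg$ and $\sum_j m_jx_j=0$ with $x_j\in gR$, then each $x_j$ lies in $\operatorname{rej}_{fR}(gR)$; equivalently, that the submodules $m_j\,gR$ of $fR$ are in direct sum and that each map $x\mapsto m_jx$ becomes injective modulo $\operatorname{rej}_{fR}(gR)$. The available tools are precisely $fJf=0$, the identity $m\cdot gJf=0$, the Peirce decompositions of $gR$ and $fR$ relative to $\{f,g\}$ (which reduce the vanishing of $\sum_j m_jx_j$ to the vanishing of its $fRg$- and $fRe_i$-components separately), and the fact that $G$ is a division ring, which forces the $g$-component of each $x_j$ to vanish; one then identifies $\mathrm{im}(\mathrm{ev})$ with $\operatorname{tr}_{gR}(fR)$ and checks equality of lengths. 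Granting exactness, fullness and faithfulness are formal: given $X,Y$, resolve them as above; a morphism $\Phi X\to\Phi Y$ lifts to a chain map between the (still exact, by exactness of $\Phi$) complexes $\Phi(Q_1)\to\Phi(Q_0)$, which, since $\Phi$ is an equivalence on projectives and $\Hom_R$ among $\{fR,B\}$ reproduces $\Lambda$, is $\Phi$ of a chain map of projective $\Lambda$-modules, hence induced by a unique $\varphi\in\Hom_\Lambda(X,Y)$, and distinct $\varphi$ give distinct maps. Finally, being full, faithful and exact, $\Phi$ induces an equivalence $\operatorname{mod}\text{-}\Lambda\xrightarrow{\ \sim\ }\Phi(\operatorname{mod}\text{-}\Lambda)$ onto a full subcategory of $\operatorname{mod}\text{-}R$ along which the inclusion is exact; that is, $\Phi$ is a full exact embedding, which is what was claimed.
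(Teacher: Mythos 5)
You should note at the outset that the paper offers no proof of this statement: it is quoted verbatim from [DR3, Section 4, Lemma 1], so there is no internal argument to compare yours against, and your construction has to stand on its own. Its formal bookkeeping is fine ($fJf=gJg=0$, $fRg=fJg=M$, $\End_R(fR\oplus B)\cong\big(\begin{smallmatrix} F & M\\ 0 & G\end{smallmatrix}\big)=\Lambda$ for $B=gR/\operatorname{rej}_{fR}(gR)$, and the behaviour of $\Phi=-\otimes_\Lambda(fR\oplus B)$ on projectives). But the step you defer as ``the heart of the matter'' --- injectivity of the evaluation map $\Hom_R(B,fR)\otimes_G B\to fR$, which is exactly the vanishing of $\operatorname{Tor}_1^\Lambda(\mathrm{top}\,P_1,\,fR\oplus B)$ and hence exactly what exactness of $\Phi$ amounts to --- is not a postponed verification: it is false under the stated hypotheses. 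Take $R=kQ/(\alpha\gamma)$, where $Q$ has vertices $1,2,3$ and arrows $\alpha,\beta\colon 2\to 1$, $\gamma\colon 3\to 2$, and put $f=e_1$, $g=e_2$. Then $R$ is basic, indecomposable, semiprimary, $F=G=k$, and $M=fJg=k\alpha\oplus k\beta$. Since $\alpha x=\beta x=0$ forces $x=0$ for $x\in gR$ (because $\beta\gamma\neq 0$), one has $\operatorname{rej}_{fR}(gR)=0$ and $B=gR$; yet $\alpha\otimes\gamma\mapsto\alpha\gamma=0$, so the evaluation map has a $4$-dimensional source and a $3$-dimensional image $\operatorname{span}(\alpha,\beta,\beta\gamma)$, and $\Phi$ applied to the monomorphism $\operatorname{rad}P_1\hookrightarrow P_1$ is not a monomorphism. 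So your functor is simply not exact for this $R$.

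Moreover the failure cannot be repaired within your design. If any full exact embedding $\Phi'$ had $\Phi'(P_1)=fR$ and $\Phi'(P_2)=B'$, then applying it to $0\to P_2^2\to P_1\to \mathrm{top}\,P_1\to 0$ (here $\operatorname{rad}P_1\cong P_2^2$) would yield a monomorphism $(B')^2\hookrightarrow fR$ with $fR$ four-dimensional and indecomposable and $B'\neq 0$ by faithfulness; $\dim_k B'=2$ would make $fR\cong (B')^2$ decomposable, while $\dim_k B'=1$ is impossible because the socle of $fR$ is multiplicity-free (one copy each of the simple $R$-modules at the vertices $2$ and $3$). Thus for this $R$ no full exact embedding can send $P_1$ to $fR$ at all; the cited lemma still holds here, but for a different reason (the quotient functor realizes mod-$R/Re_3R$ as a full exact subcategory of mod-$R$, and $R/Re_3R\cong\Lambda$), so it is your choice of images for the projectives, not merely the missing verification, that breaks at this level of generality. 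A secondary point: even where exactness does hold, your claim that fullness and faithfulness are then ``formal'' is too quick --- lifting a morphism $\Phi X\to\Phi Y$ to a morphism between the $\Phi$-images of the chosen presentations requires vanishing of obstructions of the type $\Ext^1_R(B,B)$ (only the $fR$-summands come for free, being projective), and nothing in your argument addresses this. To prove the theorem as stated one needs the actual construction of [DR3] (or additional hypotheses on $R$, such as heredity as in the paper's application, under which a variant of your argument would still require the injectivity and Ext-vanishing to be proved rather than asserted).
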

	Since every artinian ring is in particular semiprimary this result applies in our context. Assume now that $R$ is furthermore hereditary artinian, and let $e_1,...,e_n$ be a complete set of orthogonal primitive idempotents of $R$. Then we have this classical result:
	\begin{proposition}
		Let $R$ be a hereditary artinian ring and let $e$ be a primitive idempotent of $R$. Then $eRe\cong End_R(Re)$ is a division ring. In particular, if $R$ is also a local ring, then it must be a division ring.
	\end{proposition}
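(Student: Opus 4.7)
The plan is to reduce the statement to the claim that every non-zero $\varphi\in\End_R(Re)$ is an isomorphism. The ring isomorphism $eRe\cong\End_R(Re)$ via $\varphi\mapsto\varphi(e)$ (with inverse $a\mapsto(x\mapsto xa)$) is standard, so once every non-zero endomorphism of $Re$ is shown to be invertible, it will follow immediately that $eRe$ is a division ring. I would first record two structural facts used throughout: since $e$ is primitive, $Re$ is indecomposable, and since $R=Re\oplus R(1-e)$, $Re$ is projective.

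The hereditary hypothesis enters at the key moment. Given a non-zero $\varphi\in\End_R(Re)$, the image $\varphi(Re)$ is a submodule of the projective module $Re$, so by hereditariness it is itself projective. Therefore the short exact sequence
\begin{equation*}
0 \to \ker\varphi \to Re \to \varphi(Re) \to 0
\end{equation*}
splits, producing a decomposition $Re\cong\ker\varphi\oplus\varphi(Re)$. Indecomposability of $Re$ together with $\varphi\neq 0$ then forces $\ker\varphi=0$, so $\varphi$ is injective and $\varphi(Re)\cong Re$.

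To conclude surjectivity, I would invoke the artinian hypothesis: $Re$ has finite composition length, and an inclusion $\varphi(Re)\subseteq Re$ of modules of equal finite length must be an equality. Hence $\varphi$ is bijective, and the first part of the theorem follows. For the ``in particular'' clause, note that in a local ring the only idempotents are $0$ and $1$, so $1$ itself is primitive; applying the main part with $e=1$ gives $R=1\cdot R\cdot 1$, a division ring.

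I do not expect any serious obstacle. The argument is a tight interplay of three ingredients: indecomposability of $Re$ (from $e$ primitive), projectivity of its submodules (from hereditariness), and finite length (from artinianness). The only point worth noting is that all three must be used, since dropping any one of them breaks the proof, e.g.\ without hereditariness the exact sequence need not split, and without artinianness an injective endomorphism of an indecomposable projective need not be surjective.
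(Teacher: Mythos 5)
The paper states this proposition as a ``classical result'' without supplying a proof, so there is nothing to compare against; your argument is correct and is the standard one. The three ingredients you isolate — splitting $0\to\ker\varphi\to Re\to\varphi(Re)\to 0$ via hereditariness, killing $\ker\varphi$ by indecomposability of $Re$, and upgrading the resulting monomorphism to an isomorphism by a length count from artinianness — are exactly the right ones, and the treatment of the local case ($1$ is the unique nonzero idempotent, hence primitive) is correct. One cosmetic remark: the natural identification $\varphi\mapsto\varphi(e)$ is a ring \emph{anti}-isomorphism $\End_R(Re)\to eRe$ rather than an isomorphism (the paper's statement has the same imprecision), but this is immaterial since being a division ring is preserved under passing to the opposite ring.
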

	\medskip
	
	Thus, since division rings are of finite representation type and trivially satisfy $\mathfrak{X}$ we may assume that our hereditary artinian rings are not local. In those cases we can always find distinct primitive orthogonal idempotents $1\neq f,g$. Moreover, from Theorem 5.3, Proposition 5.4 and the fact that the notions of finitely generated and finite length module coincide over artinian rings we get:
	
	\begin{proposition}
		Let $R$ be a basic indecomposable hereditary artinian ring, and let $\{e_1,...,e_n\}$ be a complete set of orthogonal primitive idempotents of $R$. Then $F_i =e_iRe_i$ is a division ring for any $i$, and for any $i\neq j$ we can find a full exact embedding of the category of finitely generated modules over the upper triangular matrix ring
		$\big(\begin{smallmatrix}
			F_i & e_iJe_j\\
			0 & F_j
		\end{smallmatrix}\big)$ 
		into mod-$R$, where $J$ denotes the Jacobson radical of $R$.
	\end{proposition}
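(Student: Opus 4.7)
The statement is a direct synthesis of the two previously cited results, so my plan is simply to check that the hypotheses of Theorem 5.3 line up with the context and assemble the pieces.

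First I would dispense with the claim that each $F_i = e_iRe_i$ is a division ring. Since $R$ is hereditary artinian and $e_i$ is a primitive idempotent, Proposition 5.4 applies directly: $e_iRe_i \cong \End_R(Re_i)$ is a division ring. This is a per-idempotent application and needs no further argument.

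For the second part I would invoke Theorem 5.3 with $f = e_i$ and $g = e_j$, where $i \neq j$. To do so I need to verify its hypotheses in our setting. By assumption $R$ is basic and indecomposable. Moreover $R$ is artinian, and any artinian ring is semiprimary: its Jacobson radical is nilpotent (Hopkins--Levitzki), and $R/J$, being a quotient of an artinian ring, is itself artinian. The idempotents $e_i, e_j$ are primitive, distinct, and orthogonal by hypothesis, and by the first part $F_i = e_iRe_i$ and $F_j = e_jRe_j$ are division rings. Thus Theorem 5.3 yields a full exact embedding of the category of finite length modules over the upper triangular matrix ring
\[
\begin{pmatrix} F_i & e_iJe_j \\ 0 & F_j \end{pmatrix}
\]
into mod-$R$.

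Finally I would note the minor bookkeeping that, over an artinian ring, the categories of finitely generated modules and of finite length modules coincide; this applies both to $R$ itself and to the upper triangular matrix ring above, which is finite-dimensional over the center and thus again artinian. Hence the embedding provided by Theorem 5.3 is exactly a full exact embedding of the category of finitely generated modules, as stated. There is no substantive obstacle: the only delicate point is ensuring that ``semiprimary'' is met, which is automatic for artinian rings.
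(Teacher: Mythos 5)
Your proposal is correct and matches the paper's own reasoning: the paper obtains this proposition by exactly the same synthesis of Theorem 5.3, Proposition 5.4, and the coincidence of finitely generated with finite length modules over artinian rings. Your verification that artinian implies semiprimary is the only hypothesis-check needed, and it is handled correctly.
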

	
	\begin{remark}

	In view of the proposition above and Corollary 3.2, if we want to prove that a basic indecomposable hereditary Artin algebra $\Lambda$ of infinite representation type does not satisfy $\mathfrak{X}$ it could be a good strategy to prove that one of the rings of the form 
	$\big(\begin{smallmatrix}
		e_i \Lambda e_i & e_iJe_j\\
		0 & e_j \Lambda e_j
	\end{smallmatrix}\big)$ 
	does not satisfy $\mathfrak{X}$. If such a ring is a finite-dimensional algebra over some field $K$, the results in Section 4 tell us that it does not satisfy $\mathfrak{X}$ precisely whenever $\dim_{e_i \Lambda e_i}e_iJe_j\cdot \dim~ {e_iJe_j}_{ e_j \Lambda e_j}\geq 4$.
\end{remark}
	\medskip
	
	\begin{theorem}\cite[Theorem 4]{RS}
		Let $\Lambda$ be an indecomposable hereditary Artin algebra. Then the center of $\Lambda$ is a field. 
	\end{theorem}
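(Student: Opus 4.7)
The plan is to reduce to the basic case by Morita equivalence---since the center, being hereditary, and being indecomposable are all Morita invariant---and then exploit the classical description of a basic hereditary Artin algebra as (the associated graded of) a tensor algebra over an acyclic connected species. So assume $\Lambda$ basic indecomposable hereditary, write $J=J(\Lambda)$ and $\Lambda/J\cong D_1\times\cdots\times D_n$ with each $D_i$ a division ring and primitive idempotents $e_1,\ldots,e_n$. First I would record that $Z(\Lambda)$ is commutative artinian (it is an $R$-submodule of the artinian $R$-module $\Lambda$, and its ideals are $R$-submodules) and contains no nontrivial idempotents because the central idempotents of $\Lambda$ coincide with the idempotents of $Z(\Lambda)$. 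Hence $Z(\Lambda)$ is a local commutative artinian ring, with some maximal ideal $\mathfrak m$; the goal reduces to showing $\mathfrak m=0$.

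The next reduction is to replace ``show $Z(\Lambda)$ is a field'' by ``show $Z(\Lambda)\cap J=0$''. Indeed, every $m\in\mathfrak m$ is nilpotent in the local artinian ring $Z(\Lambda)$, and a nilpotent central element $m\in\Lambda$ satisfies $1-mr\in\Lambda^{\times}$ for every $r\in\Lambda$ (geometric series, using centrality), so $m\in J$. Thus $\mathfrak m\subseteq Z(\Lambda)\cap J$, and the whole result will follow if the latter intersection is shown to be zero.

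The heart of the argument is a computation in the associated graded algebra. For basic hereditary Artin algebras one has the standard identification
\[
\mathrm{gr}_J\Lambda \;=\; \bigoplus_{k\ge 0} J^k/J^{k+1} \;\cong\; T_{\Lambda/J}(J/J^2),
\]
the tensor algebra of the species $(\Lambda/J,\,J/J^2)$; indecomposability of $\Lambda$ gives that this species is connected (as an unoriented graph), and nilpotence of $J$ forces it to be \emph{acyclic}. Now suppose $z\in Z(\Lambda)\cap J$ is nonzero, and let $k\ge 1$ be maximal with $z\in J^k$. The image $\bar z\in J^k/J^{k+1}$ is nonzero, and it is central in $\mathrm{gr}_J\Lambda$ because $z\tilde y-\tilde y z=0$ for every lift $\tilde y$ of a homogeneous element. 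Decomposing $\bar z$ and commuting with each $e_i$ forces $\bar z\in\bigoplus_i e_i (J/J^2)^{\otimes k}e_i$; but by acyclicity of the species there are no paths of length $k\ge 1$ from a vertex to itself, so $e_i (J/J^2)^{\otimes k}e_i=0$ and $\bar z=0$, contradiction. Hence $Z(\Lambda)\cap J=0$, so $\mathfrak m=0$, and $Z(\Lambda)$ is a field.

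The main obstacle is the appeal to the tensor-algebra description of a basic hereditary Artin algebra; this is a standard theorem (no ``relations'' between arrows beyond the bimodule structure), but it must be invoked at the level of the associated graded to sidestep any Wedderburn--Malcev splitting issues over the ground ring $R$. Once this identification is in place, the rest is a direct computation, and the crux of why ``hereditary'' enters is exactly this acyclicity, which annihilates every positive-degree central element.
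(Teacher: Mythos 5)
Your proposal is correct, but note that the paper itself gives no proof of this statement: it is imported wholesale from Robson--Small (the cited Theorem 4), so yours is a genuinely independent, self-contained argument rather than a variant of an internal one. The reductions are all sound (Morita invariance of the center, of heredity and of indecomposability; $Z(\Lambda)$ commutative artinian with no nontrivial idempotents, hence local with nilpotent maximal ideal; nilpotent central elements lie in $J$, so everything comes down to $Z(\Lambda)\cap J=0$). The one load-bearing external input is the identification $\mathrm{gr}_J\Lambda\cong T_{\Lambda/J}(J/J^2)$, and this is indeed where heredity enters: it is true, and provable in a few lines, because $J$ is projective, hence flat, as a left module, so multiplication $J^{k-1}\otimes_\Lambda J\to J^k$ is bijective and tensoring with $\Lambda/J$ identifies $J^{k}/J^{k+1}$ with $J^{k-1}/J^{k}\otimes_{\Lambda/J}J/J^2$ inductively; be aware, though, that your clause ``nilpotence of $J$ forces it to be acyclic'' is only valid \emph{through} this isomorphism---nilpotence of the radical alone does not make the quiver acyclic, as the radical-square-zero cyclic algebra in Section 7 of the paper shows---so heredity, not nilpotence, is what is really doing that work. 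Finally, the graded detour can be bypassed entirely: if $z\in Z(\Lambda)\cap J$, then centrality gives $e_i z e_j=e_ie_jz=\delta_{ij}e_i z$, so $z=\sum_i e_i z e_i\in\bigoplus_i e_iJe_i$, and $e_iJe_i=0$ because $e_i\Lambda e_i$ is a division ring (heredity, as in Proposition 5.4) and $e_iJe_i$ is a proper two-sided ideal of it (Remark 6.2); hence $Z(\Lambda)\cap J=0$ at once, using only the absence of loops rather than full acyclicity, and without the basic reduction or the tensor-algebra identification. Your route has the merit of exhibiting the structural reason (the positive-degree part of an acyclic tensor algebra contains no central elements), while the shortcut stays inside $\Lambda$ and uses only facts the paper already establishes.
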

	
	\begin{corollary}
		Let $\Lambda$ be an indecomposable hereditary Artin algebra, $J$ be its Jacobson radical and let $e$ and $f$ be primitive orthogonal idempotents in $\Lambda$. Then $\big(\begin{smallmatrix}
			e\Lambda e & eJf\\
			0 &  f\Lambda f
		\end{smallmatrix}\big)$ is a finite-dimensional algebra over $Z(\Lambda)$.
	\end{corollary}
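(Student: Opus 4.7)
First, I would set $K := Z(\Lambda)$, which is a field by Theorem 5.7. Since $\Lambda$ is an Artin $R$-algebra for some commutative artinian ring $R$, by definition $\Lambda$ is a finitely generated $R$-module and the structure morphism $R \to \Lambda$ lands in the center, hence factors through $K$. Consequently $\Lambda$ is also finitely generated as a $K$-module, and because $K$ is a field this means $\Lambda$ is a finite-dimensional $K$-vector space.

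The next step is to verify that the three constituents $e\Lambda e$, $f\Lambda f$, and $eJf$ are each $K$-subspaces of $\Lambda$. This uses only that every $z\in K$ commutes with $e$ and with $f$ (by definition of the center), together with the fact that $J$ is a two-sided ideal stable under multiplication by $K$: for instance $z(e\lambda e)=e(z\lambda)e\in e\Lambda e$, and the same argument works for the other two pieces. Being $K$-subspaces of the finite-dimensional $K$-vector space $\Lambda$, each is itself finite-dimensional over $K$.

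Finally, I would endow the ring $\Gamma:=\bigl(\begin{smallmatrix} e\Lambda e & eJf\\ 0 & f\Lambda f \end{smallmatrix}\bigr)$ with the diagonal $K$-action $z\cdot\bigl(\begin{smallmatrix} a & m\\ 0 & b \end{smallmatrix}\bigr)=\bigl(\begin{smallmatrix} za & zm\\ 0 & zb \end{smallmatrix}\bigr)$. The fact that this action is central, i.e.\ that multiplication by $z$ commutes with matrix multiplication in $\Gamma$, is immediate from the centrality of $z$ inside $\Lambda$ applied entry by entry. As a $K$-vector space $\Gamma=e\Lambda e\oplus eJf\oplus f\Lambda f$ is a direct sum of three finite-dimensional $K$-subspaces, and so $\Gamma$ is a finite-dimensional $K$-algebra, which is what was to be shown. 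There is no real obstacle in this argument: the only slightly delicate point is tracking how the $R$-module structure on $\Lambda$ descends to a $K$-module structure and why the idempotents $e,f$ are compatible with it, and both are direct consequences of Theorem 5.7 together with the fact that primitive idempotents automatically commute with central elements.
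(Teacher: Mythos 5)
Your proof is correct and follows the same line of reasoning the paper implicitly relies on: by Theorem 5.7 the center $K := Z(\Lambda)$ is a field, $\Lambda$ is finite-dimensional over $K$ (since the structure map $R\to\Lambda$ of the Artin $R$-algebra lands in $K$), and then each of $e\Lambda e$, $f\Lambda f$, $eJf$ is a $K$-subspace, so the triangular matrix ring inherits a central $K$-action making it a finite-dimensional $K$-algebra. The paper leaves these details unspoken, treating the corollary as an immediate consequence of Theorem 5.7, and your write-up fills them in accurately.
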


	\medskip

	\begin{theorem}
		Let $\Lambda$ be a basic indecomposable hereditary Artin algebra, $J$ be its Jacobson radical and let $\{e_1,...,e_n\}$ be a complete set of primitive orthogonal idempotents for $\Lambda$. Denote $F_i=e_i\Lambda e_i$ and ${}_{i} J_{j}=e_iJe_j$ for every $i,j=1,...,n$. If there exist $i\neq j$ s.t. $\dim_{F_i}{}_{i} J_{j}\cdot \dim~{{}_{i} J_{j}}_{F_j}\geq 4$, then $\Lambda$ does not satisfy $\mathfrak{X}$.
	\end{theorem}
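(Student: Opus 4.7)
The plan is to assemble the machinery already developed in Sections 3, 4 and 5; essentially all the heavy lifting is done and what remains is to stitch the pieces together. Fix a pair $i\neq j$ satisfying the dimension inequality in the statement, set $M:={}_iJ_j$ viewed as an $F_i$-$F_j$-bimodule in the natural way (with $F_i=e_i\Lambda e_i$ acting on the left and $F_j=e_j\Lambda e_j$ on the right by multiplication inside $\Lambda$), and form the triangular matrix ring $R_M=\big(\begin{smallmatrix} F_i & M\\ 0 & F_j \end{smallmatrix}\big)$.

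The first step is to invoke Proposition 5.5, which supplies a full exact embedding of mod-$R_M$ into mod-$\Lambda$. Next, Corollary 5.8 guarantees that $R_M$ is a finite-dimensional algebra over the field $Z(\Lambda)$, so $M$ qualifies as an algebraic bimodule in the sense of Section 4 and the theory of that section becomes available. Writing $a=\dim_{F_j}M$ and $b=\dim_{F_i}M$, the hypothesis of the theorem reads exactly as $ab\geq 4$, so Proposition 4.7 yields that $R_M$ does not satisfy $\mathfrak{X}$. Finally, Corollary 3.2, applied to the embedding from the first step, transfers the failure of $\mathfrak{X}$ from $R_M$ to $\Lambda$.

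In effect, there is no genuine obstacle in the argument itself: all the substantive content is contained in the results being cited, namely the construction of the full exact embedding (Proposition 5.5, built on Theorem 5.3 of Dlab--Ringel), the identification of $R_M$ as a finite-dimensional $Z(\Lambda)$-algebra (Corollary 5.8, relying on the fact that the centre of an indecomposable hereditary Artin algebra is a field), the algebraic-bimodule dichotomy (Proposition 4.7, powered by Lemma 4.4 together with the length-equals-dimension-vector computation of Lemma 4.6), and the behaviour of $\mathfrak{X}$ under full exact embeddings (Corollary 3.2). The only care required is to match up the notational conventions between Sections 4 and 5 so that the dimension inequality in the hypothesis is indeed the inequality $ab\geq 4$ that places $R_M$ in the infinite-representation-type regime of Theorem 4.2.
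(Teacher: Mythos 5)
Your proposal is correct and follows essentially the same route as the paper's own proof: Corollary 5.8 to see that $\big(\begin{smallmatrix} F_i & {}_iJ_j\\ 0 & F_j \end{smallmatrix}\big)$ is a finite-dimensional $Z(\Lambda)$-algebra (so ${}_iJ_j$ is an algebraic bimodule), Proposition 5.5 for the full exact embedding into mod-$\Lambda$, Proposition 4.7 to conclude the triangular matrix ring fails $\mathfrak{X}$ when $ab\geq 4$, and Corollary 3.2 to transfer the failure to $\Lambda$. Your remark about matching the conventions $a=\dim\,{{}_iJ_j}_{F_j}$, $b=\dim_{F_i}{}_iJ_j$ with Section 4 is exactly the only bookkeeping point, and you handle it correctly.
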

	\begin{proof}
		By Corollary 5.8 we know that, for $i\neq j$, every $\big(\begin{smallmatrix}
			F_i& _{i} J_{j}\\
			0 &  F_j
		\end{smallmatrix}\big)$  is a finite-dimensional $Z(\Lambda)$-algebra. This means, by the introductory paragraph to Section 4, that $e_iJe_j$ is an algebraic $F_i,F_j$-bimodule. Also, Proposition 5.5 tells us that we have a full exact embedding of the category of finitely generated modules over the upper triangular matrix ring
		$\big(\begin{smallmatrix}
			F_i & _{i} J_{j}\\
			0 & F_j
		\end{smallmatrix}\big)$ 
		into mod-$\Lambda$.  Furthermore, Proposition 4.7 implies that $\big(\begin{smallmatrix}
			F_i & _{i} J_{j}\\
			0 & F_j
		\end{smallmatrix}\big)$  does not satisfy $\mathfrak{X}$  iff $\dim_{F_i}{}_{i} J_{j}\cdot \dim~{{}_{i} J_{j}}_{F_j}\geq 4$. Finally, from Corollary 3.2 it follows that if $\big(\begin{smallmatrix}
			F_i & _{i} J_{j}\\
			0 & F_j
		\end{smallmatrix}\big)$ does not satisfy $\mathfrak{X}$ then $\Lambda$ does not satisfy $\mathfrak{X}$ neither.
	\end{proof}
	\medskip
	
	We have reduced our question to the study of basic indecomposable hereditary Artin algebras of infinite representation type s.t. $\dim_{F_i}{}_{i} J_{j}\cdot \dim~{{}_{i} J_{j}}_{F_j}\leq 3~~\forall i\neq j$.
	\medskip

	\section{$K-$species and tensor algebras}
	
	Throughout this section $\Lambda$ will denote a hereditary indecomposable basic Artin algebra of infinite representation type s.t. $\dim_{F_i}{}_{i} J_{j}\cdot \dim~{{}_{i} J_{j}}_{F_j}\leq 3~~\forall i\neq j$, where $J$ denotes its Jacobson radical.
	\medskip
	
	We start with the following well-known lemma:
	
	\begin{lemma}
		Let $R$ be a basic hereditary ring, and let $e_1,...,e_n$ be a complete set of primitive orthogonal idempotents of $R$. If for some $i\neq j$ we have $0\neq e_iRe_j$, then $0=e_jRe_i$.
	\end{lemma}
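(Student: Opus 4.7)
My plan is to argue by contradiction: suppose both $e_iRe_j\neq 0$ and $e_jRe_i\neq 0$, and derive a contradiction with basicness. Passing to the indecomposable projective right modules $P_k=e_kR$, the standard identification $\Hom_R(P_j,P_i)\cong e_iRe_j$ (sending a morphism $\phi$ to $\phi(e_j)$) converts nonzero elements $a\in e_iRe_j$ and $b\in e_jRe_i$ into nonzero morphisms $\phi_a\colon P_j\to P_i$ and $\phi_b\colon P_i\to P_j$, each given by left multiplication. Basicness of $R$ gives $P_i\not\cong P_j$, while primitivity of the idempotents makes each $P_k$ indecomposable with local endomorphism ring $e_kRe_k$.

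The crucial observation is that any nonzero morphism between distinct indecomposable projectives must in fact be injective. This is precisely where hereditariness is used: the image $\phi_a(P_j)$ is a submodule of the projective module $P_i$, hence is itself projective, so the canonical surjection $P_j\twoheadrightarrow\phi_a(P_j)$ splits. Since $\phi_a(P_j)\neq 0$ and $P_j$ is indecomposable, the kernel of $\phi_a$ must vanish; the same reasoning applies to $\phi_b$.

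To close the argument, I consider the composition $\phi_a\circ\phi_b\colon P_i\to P_i$: it is an injective endomorphism of the finite length module $P_i$, hence an isomorphism. This makes $\phi_b$ a split monomorphism, exhibiting $P_i$ as a direct summand of the indecomposable $P_j$, which in turn forces $\phi_b$ itself to be an isomorphism $P_i\cong P_j$, contradicting basicness since $i\neq j$.

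There is no deep obstacle here; the only delicate point is recognizing the distinct roles of the hypotheses---hereditariness to produce injectivity of nonzero maps between these projectives, and basicness to rule out the resulting isomorphism $P_i\cong P_j$---after which everything reduces to the standard $\Hom$--idempotent dictionary together with a finite length observation.
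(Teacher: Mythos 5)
Your argument is correct and follows essentially the same route as the paper's: nonzero elements of $e_iRe_j$ and $e_jRe_i$ give nonzero maps between the indecomposable projectives, hereditariness (projective image, hence split) forces these maps to be monomorphisms, and a finite-length consideration then produces an isomorphism $P_i\cong P_j$, contradicting basicness. The only cosmetic differences are that you work with the right projectives $e_kR$ while the paper works with the left projectives $Re_k$, and you close the argument via the injective endomorphism $\phi_a\circ\phi_b$ of $P_i$ rather than by directly comparing $l(Re_i)$ and $l(Re_j)$.
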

	\begin{proof}
		One can show that, as abelian groups, $e_iRe_j\cong Hom_R(Re_i,Re_j)$. Assume now that $\exists 0\neq f:Re_i\rightarrow Re_j$. Then, since $R$ is hereditary, $Imf$ is projective and so $\tilde{f}:Re_i\rightarrow Imf$ must split. But $Re_i$ is indecomposable because $e_i$ is primitive, and so we must have $Kerf=0$. This just means that $f$ is a monomorphism, and so $l(Re_i)\leq l(Re_j)$. Now, since $R$ is basic, the principal indecomposable modules are pairwise non-isomorphic, that is, $Re_i\cong Re_j\Leftrightarrow i=j$. But, if $0\neq e_jRe_i$ then there must exist a non-zero $g:Re_j\rightarrow Re_i$ which must be mono, and so $l(Re_j)\leq l(Re_i)$. But then, $l(Re_i)= l(Re_j)$ and $f$ is an isomorphism, which is a contradiction. Hence, $0=e_jRe_i$.
	\end{proof}
	\begin{remark}
		We have shown in particular  that $0\neq  Hom_R(Re_i,Re_j)$ implies that any $0\neq f \in Hom_R(Re_i,Re_j)$ is a monomorphism, and so $l(Re_i)<l(Re_j)$ since $Re_i\cong Re_j\Leftrightarrow i=j$ as $R$ is basic. By the same reason, if $l(Re_i)=l(Re_j)$ with $i\neq j$, then $0=Hom_R(Re_i,Re_j)$. Hence, after reordering the principal indecomposable modules $Re_i$'s in such a way that $l(Re_j)< l(Re_i)\Rightarrow j< i$, we can assume that  $e_iRe_j=0$ for every $j>i$ Also,  $J\neq R$ for any unital ring $R$, and so $e_iJe_i\subsetneq e_i R e_i$, but $e_i R e_i$ is a division ring and $e_iJe_i$ is a proper ideal of it, so we must have $0=e_iJe_i$. Hence, we can assume from now on that, if $e_1,...,e_n$ are a complete set of primitive orthogonal idempotents, then $0=e_iJe_j$ for $j\geq i$.
	\end{remark}
	\medskip
	
	Notice that we can write $J=\underset{j<i}{\oplus} e_iJe_j$, and so $J$ has a natural bimodule structure over $F=\underset{i}{\prod} e_i \Lambda e_i=\underset{i}{\prod} F_i$. Now, the condition $\dim_{F_i}e_i J e_j\cdot \dim~{e_i J e_j}_{F_j}\leq 3$ means that either  $e_i J e_j=0$ or $\dim_{F_i}e_i J e_j=1$ or $\dim~{e_i J e_j}_{F_j}=1$. In any case, $e_i J e_j$ must be either $0$ or a simple $F_i,F_j$-bimodule, and so $J$ is a semisimple bimodule over $F$. This means in particular that there exists a bimodule $M$ over $F$ s.t. $J=J^2\oplus M$ as bimodules over $F$.
	\medskip
	
	\bigskip

	We can construct now the tensor ring $T$ associated to $F$ and $J$: 
	
	Write $T=\sum\limits_{n=0}^{\infty} T^{(n)}$, where $T^{(0)}=F$, $T^{(1)}=M$ and, for every $n>1$, consider the $n$-fold tensor product $T^{(n)}= T^{(1)}\otimes_{F} T^{(1)}\otimes_{F}...\otimes_{F} T^{(1)}$~$(n$ times). The multiplication on $T$ is defined through the canonical isomorphism $T^{(i)}\otimes_F T^{(j)}\rightarrow T^{(i+j)}$ and extended distributively. 
	\medskip
	
	
	\begin{proposition}\cite[p. 388-389]{DR1}
		Let $T$ be the tensor ring associated to $F$ and $J$. Then there exists $m\in\mathbb{N}$ s.t. $T^{(m)}=0$. In that case $T$ is a semiprimary ring and $\Lambda\cong T$ as rings.
	\end{proposition}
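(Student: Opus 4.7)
The plan is to dispatch the three claims in order: nilpotency of the positive-degree part of $T$, the semiprimary conclusion, and finally the ring isomorphism $\Lambda\cong T$.

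For the first claim I would invoke the triangular structure from Remark 6.2: after reordering the primitive idempotents, $e_iJe_j=0$ whenever $j\geq i$, and since $M$ is an $F$-subbimodule of $J$ the same triangular structure holds for $M$. Because $F=\prod_{i=1}^n F_i$ is a product of division rings, the tensor product over $F$ splits along the idempotents, so one has
\begin{equation*}
e_{i_0}T^{(k)}e_{i_k}=\bigoplus_{i_1,\ldots,i_{k-1}} e_{i_0}Me_{i_1}\otimes_{F_{i_1}}\cdots\otimes_{F_{i_{k-1}}}e_{i_{k-1}}Me_{i_k},
\end{equation*}
and a summand can be nonzero only along a strictly decreasing chain $i_0>i_1>\cdots>i_k$. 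With only $n$ indices available, no such chain has more than $n$ terms, so $T^{(m)}=0$ for all $m\geq n$.

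The second claim then follows at once. Set $\tilde{J}:=\bigoplus_{k\geq 1}T^{(k)}$; it is a two-sided ideal of $T$, and by Step~1 it satisfies $\tilde{J}^n=0$, so it is nilpotent. The quotient $T/\tilde{J}\cong T^{(0)}=F$ is a finite product of division rings, hence semisimple artinian. Nilpotent two-sided ideals lie inside the Jacobson radical, while any ideal with semisimple quotient contains it, so $\tilde{J}$ is precisely the Jacobson radical of $T$ and $T$ is semiprimary.

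For the ring isomorphism, the diagonal inclusion $F=\prod_i e_i\Lambda e_i\hookrightarrow \Lambda$ together with the chosen $F$-bimodule embedding $M\hookrightarrow J\subseteq\Lambda$ induce, via the universal property of the tensor ring over $F$, a unital ring homomorphism $\varphi:T\to\Lambda$ sending $T^{(k)}$ into $J^k$. For surjectivity, the decomposition $J=M\oplus J^2$ yields $J=M+J^2$; an easy induction then shows $J^k=M^k+J^{k+1}$, and iterating while using that $J$ is nilpotent (since $\Lambda$ is artinian) gives $J=\sum_{k\geq 1} M^k\subseteq \mathrm{im}(\varphi)$, whence $\Lambda=F+J=\mathrm{im}(\varphi)$. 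The main obstacle is injectivity: one must show that for each $k$ the multiplication map $M^{\otimes_F k}\twoheadrightarrow J^k/J^{k+1}$ is in fact an isomorphism, equivalently that $\bigoplus_k J^k/J^{k+1}$ coincides with $T$ as a graded algebra. This step uses the hereditary hypothesis essentially (it fails for general semiprimary algebras, e.g.\ Nakayama algebras with relations) and can be secured either by a $K$-dimension count with $K=Z(\Lambda)$, identifying $\dim_K e_iMe_j$ with $\dim_K\mathrm{Ext}^1_\Lambda(S_i,S_j)$ and exploiting the vanishing of higher Ext groups, which forces $\dim_K\Lambda=\dim_K T$; or by invoking the construction of Dlab--Ringel in DR1 (pp.~388--389), where the obstructions to splitting $\Lambda=F\oplus\bigoplus_{k\geq 1}M^{\otimes_F k}$ are identified with Hochschild-type classes that vanish in the hereditary case. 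Once injectivity is established, $\varphi$ is a bijective ring homomorphism and hence an isomorphism.
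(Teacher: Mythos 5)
The paper does not prove this proposition: it is cited verbatim from Dlab--Ringel, so there is no internal argument to compare yours against. Your first two steps are correct and self-contained. The triangularity from Remark 6.2 passes to the subbimodule $M\subseteq J$, so a nonzero summand in $T^{(k)}$ requires a strictly decreasing chain of $k+1$ indices among $\{1,\ldots,n\}$, forcing $T^{(m)}=0$ for $m\geq n$; and the standard argument (nilpotent ideal with semisimple artinian quotient) identifies $\bigoplus_{k\geq 1}T^{(k)}$ as the Jacobson radical of $T$, so $T$ is semiprimary.

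The third step carries the real content and is where you leave a genuine gap. You construct $\varphi:T\to\Lambda$ correctly from the universal property, and the surjectivity argument from $J=M\oplus J^2$ together with nilpotence of $J$ is fine. But injectivity — equivalently, that multiplication $M^{\otimes_F k}\to J^k/J^{k+1}$ is a bijection for every $k$ — is the crux, and you only gesture at it. The $\Ext$--dimension sketch is not yet a proof: knowing that $\dim_K e_iMe_j$ computes $\dim_K\Ext^1_\Lambda$ between the corresponding simples and that higher $\Ext$ vanishes does not directly force $\dim_K\Lambda=\dim_K T$; to recover the Cartan matrix of $\Lambda$ from the Euler form one already needs to know that $\Lambda$ has the filtered structure you are trying to establish. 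The clean way to close the gap, and what underlies the Dlab--Ringel citation, uses heredity more sharply: $J$ is a right ideal of a right hereditary ring, hence projective, hence flat as a right $\Lambda$-module. Tensoring $0\to J^k\to\Lambda\to\Lambda/J^k\to 0$ with $J\otimes_\Lambda-$ gives $J\otimes_\Lambda J^k\cong J^{k+1}$, and tensoring $0\to J^k\to J^{k-1}\to J^{k-1}/J^k\to 0$ likewise gives $J\otimes_\Lambda(J^{k-1}/J^k)\cong J^k/J^{k+1}$; since $J$ acts as zero on $J^{k-1}/J^k$, the left factor drops to $J/J^2\cong M$ and the tensor may be taken over $\Lambda/J=F$, so induction yields $M^{\otimes_F k}\cong J^k/J^{k+1}$. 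This shows $\mathrm{gr}(\varphi)$ is an isomorphism and hence $\varphi$ is injective; you should carry this out rather than merely refer to the source.
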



		\medskip

		Therefore, the class of rings we are studying are tensor rings, which indeed are finite-dimensional algebras over $K=Z(\Lambda)$ since we are assuming that $\Lambda$ is indecomposable and hereditary. The interesting fact is that the representation theory of such a tensor $K$-algebra can be understood in simple terms thanks to the theory of $K$-species that we are going to briefly present now.
		\medskip

		\begin{definition}
			Let $K$ be a field, and let $F_1,...,F_n$ be finite-dimensional division algebras over $K$. Consider now $F_i,F_j$-bimodules ${}_{i} M_{j}$ s.t. $K$ operates centrally on every ${}_{i} M_{j}$ and  every ${}_{i} M_{j}$ is a finite dimensional vector space over $K$. Then $S=(F_i, {}_{i} M_{j})_{1\leq i,j\leq n}$ is called a $K$-species.
		\end{definition}
		\medskip
		
		From $S$ we can derive a quiver $Q(S)$ by writing an arrow $i\rightarrow j$ whenever $0\neq {}_{i} M_{j}$. We call $S$ connected if the underlying graph of $Q(S)$ is connected, and we say that $S$ has no oriented cycle if $Q(S)$ does not have an oriented cycle. The latter condition implies in particular that ${}_{i} M_{i}=0$ for every $i$ and that, if $i\neq j$,  $0\neq {}_{i} M_{j}\Rightarrow 0={}_{j}M_{i}$. 
		\medskip
		\begin{definition}
			A representation $(V_i,\phi_{ij})$ of $S$ is given by a collection of right vector speces over $F_i$ and $F_j$-linear maps $\phi_{ij}: V_{i}\otimes {}_{i} M_{j}\rightarrow V_j$. Such representation is called finite-dimensional provided that all $V_i$ are finite-dimensional vector spaces. A homomorphism between two representations $(V_i,\phi_{ij})$ and $(W_i,\psi_{ij})$ is given by a collection of $F_i$-linear maps $\alpha_i: V_i \rightarrow W_i$ such that $\alpha_j \phi_{ij}=\psi_{ij}(\alpha_{i}\otimes 1)$. 
		\end{definition}
		\medskip
		
		We denote by $\mathfrak{L}(S)$ the category of all finite-dimensional representations of $S$ together with all the homomorphisms between them, which is indeed an abelian category. We say that $S$ is of finite representation type if there are only finitely many isomorphism classes of indecomposable representations in $\mathfrak{L}(S)$. Otherwise, it is said to be of infinite representation type and in that case, as well as for quiver representations, the notions of tame and wild representation type apply by considering a quadratic form defined on $Q(S)$ which coincides with the usual one for quiver representations (see \cite{R1}).
		\medskip
		
		We want to notice that in case all  $F_i=K$ and  all ${}_{i} M_{j}\in \{0,K\}$ a representation of $S$ is just a representation of the quiver $Q(S)$. Also notice that the representations of a  $K$-species of the form $S=(F_1, F_2, {}_{1} M_{2})$ are just the representations of the algebraic bimodule ${}_{1} M_{2}$.
		\medskip
		
		What is interesting for us is that we can associate a tensor $K$-algebra with a connected $K$-species  as follows:
		
		\begin{definition}
			Let $S=(F_i, {}_{i} M_{j})_{1\leq i,j\leq n}$ be a connected $K$-species. Write $F=\Pi_i F_i$ and $M=\underset{i,j}{\oplus} {}_{i} M_{j}$, which is a bimodule over $F$. Then $T(S)=\sum\limits_{n=0}^{\infty} M^{(n)}$,~~~~~~~~~~~~~~~~~~~~~~~~~~~~~~~~~~~~~~~~~~ where $M^{(0)}= F$, $M^{(1)}= M$ and $M^{(n)}=M\otimes_{F}...\otimes_F M$ $(n$ times), is the tensor algebra associated with $S$.
		\end{definition}
		\begin{remark}
			
			By \cite[p. 388]{DR1} $T(S)$ is semiprimary iff $M^{(m)}=0$ for some $m$ iff every sequence $i_1,...,i_n$ with $0\neq {}_{i_k} M_{i_{k+1}}$, for $1\leq k\leq n-1$ is of bounded length. This means in particular that if $T(S)$ is semiprimary then $S$ cannot contain any oriented cycle, since otherwise we can find an infinite path in $S$.
		\end{remark}
		
		The category of finite dimensional representations of a connected $K$-species $S$ without oriented cycles  is equivalent, as an abelian category,  to the category of finitely generated right modules over $T(S)$:
		\begin{proposition}\cite[Proposition 10.1]{DR1} 
			Let $S$ be a connected $K$-species without oriented cycles. Then there is an equivalence of abelian categories between $\mathfrak{L}(S)$ and mod-$T(S)$.
		\end{proposition}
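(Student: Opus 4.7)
The plan is to construct explicit mutually quasi-inverse functors $\Phi:\mathfrak{L}(S)\to\text{mod-}T(S)$ and $\Psi:\text{mod-}T(S)\to\mathfrak{L}(S)$ using the universal property of the tensor algebra. Given a representation $(V_i,\phi_{ij})$ of $S$, I would form $V=\bigoplus_{i} V_i$, which is automatically a right $F$-module with $V_i=Ve_i$ (where $e_i$ denotes the identity of $F_i$ inside $F=\prod_i F_i$). The direct sum $\bigoplus_{i,j}\phi_{ij}$ assembles into an $F$-balanced map $V\times M\to V$, equivalently an $F$-linear map $V\otimes_F M\to V$, and by the universal property of $T(S)=T_F(M)$ this extends in a unique way to a right $T(S)$-module structure on $V$, with $M^{(n)}$ acting by iterated application of the $\phi_{ij}$'s along composable arrows. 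On morphisms, a collection $(\alpha_i)$ glues to an $F$-linear map $\alpha:V\to W$ that commutes with the $M$-action by the compatibility condition $\alpha_j\phi_{ij}=\psi_{ij}(\alpha_i\otimes 1)$, and hence with the full $T(S)$-action.

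Next I would define $\Psi$ by setting $V_i:=Ve_i$ for a right $T(S)$-module $V$; orthogonality of the $e_i$'s gives $V=\bigoplus_i V_i$ as $F$-modules, and the $M$-action restricts, using the identification ${}_iM_j=e_iMe_j$, to $F_j$-linear maps $\phi_{ij}:V_i\otimes_{F_i}{}_iM_j\to V_j$. The verification that $\Psi\circ\Phi$ and $\Phi\circ\Psi$ are naturally isomorphic to the respective identities is formal: on one side the decomposition $V=\bigoplus Ve_i$ together with the restricted $M$-action reproduces exactly the original $(V_i,\phi_{ij})$; on the other side, since $T(S)$ is generated as a $K$-algebra by $F$ and $M$, any $T(S)$-linear structure on $V$ is fully determined by how $F$ and $M$ act. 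Exactness and additivity of both functors are immediate because kernels, cokernels and direct sums are computed componentwise at the level of underlying vector spaces.

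The remaining point is to match the finiteness conditions. By the remark preceding the statement, the no-oriented-cycle hypothesis (together with $S$ having finitely many vertices) forces $M^{(m)}=0$ for some $m$, and since every $F_i$ and every ${}_iM_j$ is finite-dimensional over $K$, the algebra $T(S)$ itself is finite-dimensional over $K$, hence artinian. Therefore finitely generated right $T(S)$-modules are precisely those of finite $K$-dimension, and the equality $\dim_K V=\sum_i\dim_K F_i\cdot\dim_{F_i}V_i$ shows this is the same as requiring each $V_i$ to be finite-dimensional over $F_i$, i.e., $(V_i,\phi_{ij})\in\mathfrak{L}(S)$.

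The main technical obstacle I expect is bookkeeping: one must check that the extension of the $F$-linear map $V\otimes_F M\to V$ to higher tensor powers $V\otimes_F M^{(n)}\to V$ really corresponds to composing the $\phi_{ij}$'s along paths $i_0\to i_1\to\cdots\to i_n$ in $Q(S)$, and that the various tensor products over $F$ versus over the individual $F_i$'s agree. This reduces to a routine computation based on the decomposition $M=\bigoplus_{i,j}e_iMe_j$ and the orthogonality of the $e_i$'s, but care is needed to ensure that the universal property of $T(S)$ really captures the intended action.
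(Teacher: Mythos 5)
The paper does not supply its own argument here—it simply cites \cite[Proposition 10.1]{DR1}—so there is no in-text proof to compare against. Your construction of the mutually quasi-inverse functors via $V=\bigoplus_i V_i$ with $V_i=Ve_i$, the identification of an $F$-linear map $V\otimes_F M\to V$ with a right $T_F(M)$-module structure, the componentwise verification of exactness, and the use of the no-oriented-cycle hypothesis (plus finitely many vertices) to get $M^{(m)}=0$ and hence finite $K$-dimension of $T(S)$, is exactly the standard argument underlying the cited reference, and it is correct.
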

		
		\begin{remark}
			It is not hard to see that if $S$ is connected with no oriented cycles, $T(S)$ satisfies $\mathfrak{X}$ if and only if whenever we have two finite-dimensional indecomposable representations ~  $V=(V_i,\phi_{ij})$ and $W=(W_i,\psi_{ij})$ of $S$ s.t. $\dim~{ V_{i}}_{F_i}=\dim~{ W_{i}}_{F_i}~~\forall i$, then $V\cong W$.
		\end{remark}
		
		\bigskip
		
		We can do the reverse process here, meaning, start with  our specific indecomposable basic hereditary Artin algebra $\Lambda$ of infinite representation type  such that $\dim_{F_i}{}_{i} J_{j}\cdot \dim~{{}_{i} J_{j}}_{F_j}\leq 3~~\forall i\neq j$, where ${}_{i} J_{j}=e_i J e_j$ and $F_i=e_i \Lambda e_i~~\forall i$, which turned out to be a tensor $Z(\Lambda)-$algebra [Proposition 6.4]. We can write $M=\underset{i,j}{\oplus}e_iMe_j=\underset{i,j}{\oplus}{}_{i} M_{j}$, where $M$ was such that $J=M\oplus J^2$ as $F=\underset {i}{\prod} F_i-$modules.  Associate to it now the $K$-species $S(\Lambda)=(F_i, {}_{i} M_{j})_{1\leq i,j\leq n}$. Then $T(S(\Lambda))\cong \Lambda$ by the definition of the tensor algebra associated to $S(\Lambda)$ since we know the tensor algebra structure of $\Lambda$ by Proposition 6.4. Hence, in particular $T(S(\Lambda))$ is artinian and so semiprimary; and so by the remark above $S(\Lambda)$ does not contain any oriented cycle. Also, it is connected since $\Lambda$ was assumed to be indecomposable.
		\medskip
		
		Therefore, by Proposition 6.6 and Remark 3.3 we have:
		\medskip
		
		\begin{theorem}
			There is  an equivalence of abelian categories between $\mathfrak{L}(S(\Lambda))$ and mod-$\Lambda$, and so  $\Lambda$ satisfies $\mathfrak{X}$ if and only if $\mathfrak{L}(S(\Lambda))$ satisfies $\mathfrak{X}$.
		\end{theorem}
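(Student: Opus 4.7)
The proof is essentially an assembly of the preceding machinery, so my plan is to verify that each hypothesis needed is already in hand and then chain the citations together. The claim decomposes into two parts: the equivalence of abelian categories, and the preservation of $\mathfrak{X}$ across that equivalence.

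For the equivalence, the plan is to invoke Proposition 6.6, which says that for any connected $K$-species $S$ with no oriented cycles, there is an equivalence of abelian categories $\mathfrak{L}(S) \simeq \mathrm{mod}\text{-}T(S)$. To apply this with $S = S(\Lambda)$, I need to check three things: (i) $T(S(\Lambda)) \cong \Lambda$; (ii) $S(\Lambda)$ is connected; and (iii) $S(\Lambda)$ has no oriented cycles. All three are already established in the paragraph preceding the theorem: (i) is immediate from Proposition 6.4 together with the very construction of $S(\Lambda)$ from the bimodule decomposition $J = M \oplus J^2$; (ii) follows from the standing assumption that $\Lambda$ is indecomposable (since a non-trivial partition of the vertex set of $S(\Lambda)$ with no bimodules crossing between the parts would give a corresponding ring decomposition of $T(S(\Lambda)) \cong \Lambda$); and (iii) follows by Remark 6.5, because $\Lambda$ is artinian, hence $T(S(\Lambda))$ is semiprimary, so $S(\Lambda)$ cannot contain an oriented cycle.

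For the preservation of $\mathfrak{X}$, I would appeal directly to Remark 3.3, which records that $\mathfrak{X}$ is invariant under equivalences of abelian categories in which every object has finite length. The only thing left to verify is this finite-length condition: objects of $\mathrm{mod}\text{-}\Lambda$ have finite length because $\Lambda$ is artinian, and the objects of $\mathfrak{L}(S(\Lambda))$ are finite-dimensional representations, which under the equivalence correspond to the finite length modules. Putting these together gives immediately that $\Lambda$ satisfies $\mathfrak{X}$ iff $\mathfrak{L}(S(\Lambda))$ does.

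There is no substantial obstacle here; the theorem is a clean consequence of Proposition 6.4, Proposition 6.6, and Remark 3.3. The only point requiring any care is spelling out why $S(\Lambda)$ is connected and acyclic, but both reduce to statements already recorded in the surrounding discussion (indecomposability of $\Lambda$ and the semiprimary criterion of Remark 6.5). The proof can thus be written in a few lines of glue.
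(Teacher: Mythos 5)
Your proposal follows exactly the same route as the paper: establish $T(S(\Lambda))\cong\Lambda$ via Proposition 6.4, deduce that $S(\Lambda)$ is connected and has no oriented cycles (using indecomposability and Remark 6.5), apply Proposition 6.6 for the equivalence, and then invoke Remark 3.3 to transport $\mathfrak{X}$. The only minor addition you make is explicitly noting the finite-length hypothesis needed for Remark 3.3, which the paper leaves tacit; this is a reasonable bit of hygiene but not a different argument.
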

		\medskip
		
		Thus, in view of Theorem 5.9 and Theorem 6.7, our discussion has been reduced to study the indecomposable representations over a $K$-species  $S=(F_i, {}_{i} M_{j})_{1\leq i,j\leq n}$ connected and without oriented cycles s.t. $\dim_{F_i}{}_{i} M_{j}\cdot \dim~{{}_{i} M_{j}}_{F_j}\leq 3~~\forall i\neq j$. 
		
		\bigskip
		
		Before continuing we want to recall that if $S$ is a connected $K$-species of wild representation type, then we have a full exact embedding mod-$F\langle X,Y\rangle\rightarrow \mathfrak{L}(S)$ for some field $F$ \cite[Theorem 2]{R1}. In that case, $F\langle X,Y\rangle$ never satisfies $\mathfrak{X}$. One way of seeing this is by considering the quotient ring $A=F\langle X,Y\rangle/(X^2,Y^2,XY-YX)$. $A$ is a commutative artinian ring but it is not PIR since $0\neq I=(X,Y)$ is a proper ideal of $A$. Then, by Theorem 2.10 it follows that $A$ does not satisfy $\mathfrak{X}$, and hence $F\langle X,Y\rangle$ does not satisfy $\mathfrak{X}$ neither. Thus, wild $K$-species do not satisfy $\mathfrak{X}$, and so we can assume that our $K$-species of infinite representation type have all tame type.
		\medskip
		
		A complete characterization of the connected $K$-species with  no oriented cycles which are of tame type has been done in \cite{DR2}, and a complete list of all the possible cases appears in there. So an inspection case by case of the possible $K$-species that can satisfy our condition could be done in order to see that they cannot satisfy $\mathfrak{X}$. However, a more general theorem applies to all connected $K$-species with no oriented cycles  of tame representation type.
		\medskip
		
		Before stating the theorem we need the following definitions:
		
		\begin{definition}
			Let $M$ be an algebraic bimodule over $F$ and $G$, where $F$ and $G$ are division rings. Then $M$ is said to have type $\tilde{A}_{ab}$ if $\dim_{F} M=a$ and $\dim~ M_G=b$.
		\end{definition}
		\begin{definition}\cite[7.1]{R1} 
			Let $M$ be an algebraic bimodule s.t. $ab=4$, $a\leq b$.  Given an indecomposable representation $V=(V_1,V_2,\phi)$ its defect $\sigma V$ is defined as $\sigma V=\dim_F V_1-\dim {V_{2}}_{G}$ in case $2=a=b$ and by  $\sigma V=2\cdot \dim_F V_1-\dim {V_2}_G$ in case $a=1$ and $b=4$. A representation  is called regular if it is the direct sum of indecomposable representations with zero defect. We denote by $\mathfrak{r}(M)$ the full subcategory of all regular (finite-dimensional) representations of $M$, which is indeed an abelian exact subcategory of $\mathfrak{L}(M)$ closed under extensions.
		\end{definition} 
		
		\begin{remark}
			The definition in \cite[7.1]{R1} is stated for what we call affine bimodules with constant dimension, but an algebraic bimodule $M$ s.t. $ab=4$ with $a\leq b$ is such (see \cite[6.1, 6.4]{R1}).
		\end{remark}
		
		We are ready now to state the theorem:
		\begin{theorem}
			\cite[Theorem 5.1]{DR2} Let $S$ be a connected $K$-species with no oriented cycles and of tame representation type. Then there exists an algebraic bimodule $M$ of type $\tilde{A}_{14}$ or $\tilde{A}_{22}$ and a full exact embedding $\mathfrak{F}: \mathfrak{r}(M)\rightarrow \mathfrak{L}(S)$.
		\end{theorem}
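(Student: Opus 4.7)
My plan is to proceed by classification, using the Dlab--Ringel description of connected tame $K$-species without oriented cycles as the Euclidean diagrams $\tilde{A}_n, \tilde{B}_n, \tilde{C}_n, \tilde{D}_n, \tilde{E}_{6,7,8}, \tilde{F}_{41}, \tilde{F}_{42}, \tilde{G}_{21}, \tilde{G}_{22}$, with valuations on edges given by the pair $(\dim_{F_i}{}_i M_j,\dim\, {}_i M_{j\,F_j})$. The goal is to produce, for each tame $S$, a single algebraic bimodule $M$ of type $\tilde{A}_{14}$ or $\tilde{A}_{22}$ together with a full exact embedding $\mathfrak{F}:\mathfrak{r}(M)\to \mathfrak{L}(S)$.

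The structural input I would use is the well-known description of $\mathfrak{L}(S)$ for tame $S$: the Auslander--Reiten quiver splits as preprojective--regular--preinjective, and the regular component is a separating tubular family $(\mathcal{T}_x)_{x\in X}$ with generic parameter $\mathbb{Q}$-set $\mathbb{P}^1(K)$ and only finitely many tubes of rank $>1$. On the other hand, the algebraic bimodules of type $\tilde{A}_{22}$ and $\tilde{A}_{14}$ are the two \emph{minimal} tame cases in the sense of Section~4: for each of them every regular tube is homogeneous and $\mathfrak{r}(M)$ is the ``smallest possible'' abelian tubular family, indexed cleanly by $\mathbb{P}^1(K)$ with no exceptional vertex data at all.

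To construct $\mathfrak{F}$ I would then invoke Schofield's perpendicular calculus. Given a tame $S$ which is not already a rank-two species, pick a maximal collection of pairwise orthogonal exceptional regular simples $E_1,\dots,E_r\in \mathfrak{L}(S)$, namely those concentrated in the non-homogeneous tubes, and form the right perpendicular category $\{E_1,\dots,E_r\}^{\perp}$. This is known to be a full, exact, abelian subcategory of $\mathfrak{L}(S)$ closed under extensions, and to be equivalent to $\mathfrak{L}(S')$ for a connected tame species $S'$ of strictly smaller combinatorial complexity (fewer vertices, or the same quiver shape with smaller valuations). Iterating the reduction, one eventually reaches a two-vertex species whose algebraic bimodule $M$ must, by the tame classification, have type $\tilde{A}_{22}$ or $\tilde{A}_{14}$. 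Restricting the composition of these perpendicular equivalences to regular objects yields the required $\mathfrak{F}$, which inherits fullness and exactness from each individual perpendicular inclusion.

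The main obstacle I expect is the bookkeeping in the non-simply-laced cases $\tilde{B}_n, \tilde{C}_n, \tilde{F}_{4i}, \tilde{G}_{2i}$: there the division rings at the two surviving vertices after the perpendicular reduction need not coincide with any single $F_i$ from the original $S$, and the $K$-dimensions of the bimodules must be tracked carefully through each step so as to land \emph{precisely} at $(\dim_F M,\dim M_G)\in\{(2,2),(1,4)\}$ rather than at some other tame dimension type. This is probably why the proof in \cite{DR2} proceeds by a case-by-case inspection of the finite Dlab--Ringel list rather than by a uniform one-shot construction.
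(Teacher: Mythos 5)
The paper does not prove this statement; it is cited directly from Dlab--Ringel \cite{DR2}, whose argument, as you correctly anticipate at the end, is a case-by-case construction of the embedding functor for each tame Euclidean species type in their classification. Your proposal via perpendicular calculus is therefore a genuinely different and more uniform route (and anachronistic in a harmless way --- Schofield's perpendicular calculus postdates the 1974 source by roughly fifteen years). If carried out, it would even give slightly more than is stated, namely a full exact embedding of all of $\mathfrak{L}(M)$ rather than of $\mathfrak{r}(M)$ alone, which is all that Corollary 6.10 actually uses.

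Some steps in the sketch do need tightening before it is a proof. The quasi-simples you want to kill cannot be chosen pairwise orthogonal: in a stable tube of rank $p$, consecutive quasi-simples have $\Hom=0$ but $\Ext^1(E_i,\tau E_i)\neq 0$, so what is really needed is that the chosen $\sum(p_i-1)=n-2$ objects, taken tube by tube and suitably ordered, form an exceptional sequence, to which perpendicular calculus then applies. Connectedness of the resulting two-vertex perpendicular category is not automatic; it must be argued, e.g.\ by noting that a disconnected rank-two perpendicular category would be a product of division rings, hence of finite type, whereas the homogeneous tubes of $S$ (being Hom- and Ext-orthogonal to the removed exceptional regular simples) survive into $E^\perp$ and force infinite type. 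Finally, landing exactly in $\{(2,2),(1,4)\}$ requires ruling out $ab>4$: this follows because a wild bimodule category fully and exactly embedded in $\mathfrak{L}(S)$ would contradict the tameness of $S$, while $ab\geq 4$ is forced by infinite type and the rank-two count. With these three points supplied, the strategy is sound.
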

		\begin{corollary}
			If $S$ be a connected $K$-species with no oriented cycles and of tame representation type. Then $\mathfrak{L}(S)$ does not satisfy $\mathfrak{X}$.
		\end{corollary}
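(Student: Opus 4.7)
The plan is to combine Theorem 6.11 with the non-$\mathfrak{X}$ phenomenon for algebraic bimodules from Section 4. By Theorem 6.11, fix an algebraic bimodule $M$ of type $\tilde{A}_{14}$ or $\tilde{A}_{22}$---so $ab=4$---together with a full exact embedding $\mathfrak{F}\colon\mathfrak{r}(M)\to\mathfrak{L}(S)$.

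First I would apply Lemma 4.4: since $ab\geq 4$, it supplies at least two pairwise non-isomorphic indecomposable representations $X_1,X_2$ of $M$ sharing a common dimension vector $(1,s)$ with $q(1,s)\leq 0$. The positive semi-definiteness of $q$ when $ab=4$ forces $q(1,s)=0$, so $(1,s)$ is a null root and the $X_i$ have defect zero. Thus $X_1,X_2\in\mathfrak{r}(M)$, and by Lemma 4.6 they share identical composition factors in $\mathfrak{L}(M)$. Applying $\mathfrak{F}$, the standard argument used in the proof of Proposition 3.1 shows that $\mathfrak{F}(X_1),\mathfrak{F}(X_2)$ are pairwise non-isomorphic indecomposable objects of $\mathfrak{L}(S)$.

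The principal obstacle is to conclude that $\mathfrak{F}(X_1)$ and $\mathfrak{F}(X_2)$ share identical composition factors \emph{in $\mathfrak{L}(S)$}. Proposition 3.1 alone only transfers composition-factor equality relative to the source category $\mathfrak{r}(M)$, and inside $\mathfrak{r}(M)$ the representations $X_1$ and $X_2$ may very well have different composition factors (for instance when they are simple regulars lying in distinct homogeneous tubes, as happens already for $M$ the Kronecker bimodule over an algebraically closed field). The way around this is to exploit the specific form of the embedding $\mathfrak{F}$ underlying Theorem 6.11 (cf.\ \cite{DR2}): it sends indecomposable regulars of null-root dimension in $\mathfrak{L}(M)$ to indecomposable regulars of null-root dimension in $\mathfrak{L}(S)$. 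Once this is established, $\mathfrak{F}(X_1)$ and $\mathfrak{F}(X_2)$ acquire identical dimension vectors in $\mathfrak{L}(S)$, hence identical composition factors, and this contradicts $\mathfrak{X}$ for $\mathfrak{L}(S)$.
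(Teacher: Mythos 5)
Your proposal starts along the same lines as the paper (Lemma 4.4 to produce two non-isomorphic indecomposable regulars $X_1,X_2$ of common null-root dimension, then Theorem 6.11 and a transfer to $\mathfrak{L}(S)$), but you have correctly put your finger on a genuine subtlety that the paper's own proof glosses over. The paper simply asserts ``this just means that $\mathfrak{r}(M)$ does not satisfy $\mathfrak{X}$,'' and then invokes Proposition 3.1 with $B=\mathfrak{r}(M)$. That step is not justified as written: Lemma 4.6 gives that $X_1$ and $X_2$ have the same composition factors in $\mathfrak{L}(M)$, but Proposition 3.1 requires them to have the same composition factors in $\mathfrak{r}(M)$, whose simple objects are the \emph{simple regular} representations. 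If $X_1$ and $X_2$ are non-isomorphic simple regulars (which is exactly what happens for the Kronecker bimodule over an algebraically closed field), then each is its own unique composition factor in $\mathfrak{r}(M)$ and they do \emph{not} share composition factors there. So you are right that the composition-factor transfer must be argued differently.

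Where your fix is still imprecise is the clause ``sends indecomposable regulars of null-root dimension in $\mathfrak{L}(M)$ to indecomposable regulars of null-root dimension in $\mathfrak{L}(S)$''; a null root is only determined up to a positive integer multiple, so this statement alone does not give $\dim\mathfrak{F}(X_1)=\dim\mathfrak{F}(X_2)$. The fact one actually needs---and which does hold for the explicit construction of the embedding in \cite{DR2}---is that $\dim\mathfrak{F}(X)$ is a function of $\dim X$ (indeed a fixed linear map on dimension vectors, since $\mathfrak{F}$ is exact and the construction is uniform across objects). Granting that, $\dim X_1=\dim X_2$ gives $\dim\mathfrak{F}(X_1)=\dim\mathfrak{F}(X_2)$, and since composition factors in $\mathfrak{L}(S)$ are read off from the dimension vector (the simple at vertex $i$ occurs with multiplicity $\dim_{F_i} V_i$), the two images have the same composition factors while being non-isomorphic indecomposables, contradicting $\mathfrak{X}$ for $\mathfrak{L}(S)$. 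With that sharpening, your argument is complete and in fact more careful than the paper's version.
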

		\begin{proof}
			For the case $2=a=b$, the billinear form $\tilde{q}$ associated to $M$ defined in  Section 4 has the form (up to scalar multiplication) 
			$\big(\begin{smallmatrix}
				2 & -4\\
				0 & 2
			\end{smallmatrix}\big)$. In that case for $x=(1,1)$ we have $\tilde{q}((1,1))=0$, and so there is an indecomposable representation $V$ with dimension vector $(1,1)$ which  satisfies $\sigma V=0$. Thus, it is a regular representation and we know by Lemma 4.4 that there are at least two non-isomorphic indecomposable (regular) representations with dimension vector $(1,1)$.
			
			For the case of $a=1,b=4$ we have $\tilde{q}((1,2))=0$, so the same argument holds since every indecomposable representation with dimension vector $(1,2)$ is regular in that case.
			\medskip
			
			In any case, this just means that $\mathfrak{r}(M)$ does not satisfy $\mathfrak{X}$. Therefore, by Theorem 6.9 and Proposition 3.1  we conclude that $\mathfrak{L}(S)$ cannot satisfy $\mathfrak{X}$.
		\end{proof}
		\medskip
		
		Thus, by Theorem 6.7 we get the following:
		
		\begin{proposition}
			Let $\Lambda$ be an indecomposable basic hereditary Artin algebra of infinite representation type s.t. $\dim_{F_i}{}_{i} J_{j}\cdot \dim~{{}_{i} J_{j}}_{F_j}\leq 3~~\forall i\neq j$, where $F_i=e_i \Lambda e_i$, ${}_{i} J_{j}=e_i J e_j$ and $J$ is the Jacobson radical of $\Lambda$. Then $\Lambda$ does not satisfy $\mathfrak{X}$.
		\end{proposition}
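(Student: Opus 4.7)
The plan is to invoke Theorem 6.7, which gives an equivalence of abelian categories between $\mathfrak{L}(S(\Lambda))$ and mod-$\Lambda$, thereby reducing the problem to showing that $\mathfrak{L}(S(\Lambda))$ does not satisfy $\mathfrak{X}$. As recorded in the paragraph preceding Theorem 6.7, the species $S(\Lambda)$ is connected (since $\Lambda$ is indecomposable) and has no oriented cycles (since $\Lambda \cong T(S(\Lambda))$ is artinian, hence semiprimary, so Remark 6.5 applies). Furthermore, the equivalence transfers representation type, so $S(\Lambda)$ is of infinite representation type; in particular it must be either of tame or of wild representation type.

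I would then split into these two cases. In the tame case, Corollary 6.13 applies directly to $S(\Lambda)$ (which is connected and has no oriented cycles) and delivers the conclusion that $\mathfrak{L}(S(\Lambda))$ does not satisfy $\mathfrak{X}$. In the wild case, I would use the argument sketched in the discussion just before Corollary 6.13: there is a full exact embedding mod-$F\langle X,Y\rangle \to \mathfrak{L}(S(\Lambda))$ for some field $F$, and the commutative artinian quotient $A = F\langle X,Y\rangle/(X^2,Y^2,XY-YX)$ is not a PIR (the ideal $(X,Y)$ is not principal), so by Theorem 2.10 it does not satisfy $\mathfrak{X}$. Proposition 2.8 then propagates this failure to $F\langle X,Y\rangle$, and Corollary 3.2 transfers it through the full exact embedding to $\mathfrak{L}(S(\Lambda))$.

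Either way, $\mathfrak{L}(S(\Lambda))$ does not satisfy $\mathfrak{X}$, and Theorem 6.7 translates this back to $\Lambda$. The proof is essentially a bookkeeping assembly of previously established results, so no serious obstacle is expected: the hypothesis $\dim_{F_i}{}_{i} J_{j}\cdot \dim~{{}_{i} J_{j}}_{F_j}\leq 3$ for all $i\neq j$ has already been absorbed into the machinery via Theorem 5.9 and the construction identifying $\Lambda$ with the tensor algebra $T(S(\Lambda))$, and need not re-enter this final step. The only point that requires a sentence of care is verifying that infinite representation type actually does transfer across the equivalence of Theorem 6.7, but this is immediate from the fact that equivalences of categories preserve isomorphism classes of indecomposable objects.
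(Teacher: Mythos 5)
Your proposal matches the paper's (largely implicit) argument exactly: reduce to $\mathfrak{L}(S(\Lambda))$ via the equivalence of Theorem 6.7, then handle the wild case by the $F\langle X,Y\rangle$ embedding combined with Theorem 2.10, Proposition 2.8 and Corollary 3.2, and the tame case by the corollary to Dlab--Ringel's Theorem. This is precisely what the paper does in the paragraphs preceding the proposition, so the proof is correct and essentially identical in approach.
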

		\medskip
		
		Proposition 6.11 and Theorem 5.9 imply:
		\begin{theorem}
			Let $\Lambda$ be a basic indecomposable hereditary Artin algebra of infinite representation type. Then $\Lambda$ does not satisfy $\mathfrak{X}$.
		\end{theorem}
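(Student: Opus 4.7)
The plan is to proceed by a clean dichotomy on the numerical invariant that has been driving the preceding development, namely the product $d_{ij} := \dim_{F_i}{}_{i} J_{j}\cdot \dim\,{{}_{i} J_{j}}_{F_j}$, taken over pairs $i\neq j$ of a complete set of primitive orthogonal idempotents $\{e_1,\dots,e_n\}$ for the given basic indecomposable hereditary Artin algebra $\Lambda$ of infinite representation type. For any fixed $i\neq j$, exactly one of the two alternatives $d_{ij}\geq 4$ or $d_{ij}\leq 3$ must hold, so quantifying over all pairs gives the logical split: either there exists a pair with $d_{ij}\geq 4$, or $d_{ij}\leq 3$ for every $i\neq j$.

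First I would handle the large-dimension case. If we can find $i\neq j$ with $d_{ij}\geq 4$, then Theorem 5.9 applies directly and yields that $\Lambda$ does not satisfy $\mathfrak{X}$. (Recall that Theorem 5.9 was set up exactly for this: combining Proposition 5.5, the algebraic-bimodule machinery of Section~4 via Proposition 4.7, and the transfer of failure of $\mathfrak{X}$ along full exact embeddings from Corollary 3.2.) So nothing further is required in this branch.

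Next I would handle the small-dimension case, in which $d_{ij}\leq 3$ for every $i\neq j$. This is precisely the standing hypothesis of Section 6, and the conclusion is the content of Proposition 6.11: such a $\Lambda$ fails $\mathfrak{X}$ because the associated $K$-species $S(\Lambda)$ (which is connected, has no oriented cycles by the reasoning around Proposition 6.4 and Remark 6.2, and is of infinite representation type because $\Lambda$ is) must be either tame or wild, and in either case Corollary 6.10 together with the wild case (reduced to the non-PIR commutative example $F\langle X,Y\rangle/(X^2,Y^2,XY-YX)$ via Theorem 2.10) yields that $\mathfrak{L}(S(\Lambda))$ does not satisfy $\mathfrak{X}$; by Theorem 6.7, $\Lambda$ inherits this failure.

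Combining the two cases exhausts all possibilities, so $\Lambda$ does not satisfy $\mathfrak{X}$, as claimed. I do not expect any real obstacle here: the entire difficulty of the hereditary case has already been absorbed into Theorem 5.9 (which handles bimodules of rank $\geq 4$ via algebraic-bimodule theory) and Proposition 6.11 (which handles the remaining shapes via the tame/wild classification of species). The present theorem is the bookkeeping step that packages these two halves into a single statement; the one thing to keep in mind while writing the proof is to make explicit that the dichotomy is genuinely exhaustive, i.e.\ that no configuration of $d_{ij}$-values falls between the hypotheses of Theorem 5.9 and Proposition 6.11.
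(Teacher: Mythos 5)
Your proposal is correct and matches the paper's own argument: the paper derives this theorem directly by combining Theorem 5.9 (the case where some $\dim_{F_i}{}_{i}J_{j}\cdot\dim\,{{}_{i}J_{j}}_{F_j}\geq 4$) with Proposition 6.11 (the complementary case where all these products are at most $3$), exactly the dichotomy you describe. Your additional remark that the split is exhaustive is the only content of the proof, and you have it right.
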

		Finally, \cite[Ch. VIII Corollary 2.4]{AR}, together with the Remarks 5.1 and 5.2 imply:
		
		\begin{theorem}
			The following are equivalent for a hereditary Artin algebra $\Lambda$:
			\begin{itemize}
				\item[a)] $\Lambda$ is of finite representation type.
				\item[b)]$\Lambda$ satisfies $\mathfrak{X}$.
			\end{itemize}
		\end{theorem}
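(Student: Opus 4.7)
The forward direction (a) $\Rightarrow$ (b) is already recorded in the literature as \cite[Ch. VIII Corollary 2.4]{AR}, so I would simply cite it. The bulk of the proof is the converse, and by contrapositive it amounts to showing that a hereditary Artin algebra of infinite representation type fails $\mathfrak{X}$. The strategy is to reduce to the basic indecomposable case, where Theorem 6.12 finishes the job, using Remarks 5.1 and 5.2 as the reduction tools.

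In detail, suppose $\Lambda$ is a hereditary Artin algebra of infinite representation type. By Remark 5.1, there is a basic Artin algebra $\Gamma$ Morita equivalent to $\Lambda$; since being hereditary, the representation type, and the property $\mathfrak{X}$ are all Morita invariants (the last by Remark 3.3), $\Gamma$ is itself a basic hereditary Artin algebra of infinite representation type, and $\Lambda$ satisfies $\mathfrak{X}$ iff $\Gamma$ does. So I may assume $\Lambda$ is basic.

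Next, write $\Lambda = \Lambda_1 \times \cdots \times \Lambda_r$ as a finite product of indecomposable rings, which is possible since $\Lambda$ is artinian (Remark 5.2). Each $\Lambda_i$ is again basic and hereditary (direct factors of basic hereditary rings are basic hereditary). Because $\Lambda$ has infinite representation type, at least one factor, say $\Lambda_{i_0}$, must have infinite representation type as well. Theorem 6.12 applied to $\Lambda_{i_0}$ shows that $\Lambda_{i_0}$ does not satisfy $\mathfrak{X}$. By Lemma 2.2, the property $\mathfrak{X}$ is preserved and reflected by finite products, and therefore $\Lambda = \prod_i \Lambda_i$ does not satisfy $\mathfrak{X}$ either. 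This proves (b) $\Rightarrow$ (a).

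There is essentially no obstacle here, as all the heavy lifting has been done in Sections 5 and 6; the statement is just the combination of Theorem 6.12 with the Morita and product reductions on one side, and the cited result of \cite{AR} on the other. The only minor point to be careful about is to observe that the reductions of Remarks 5.1 and 5.2 genuinely preserve both the hypothesis ``hereditary'' and the hypothesis ``infinite representation type,'' which is standard.
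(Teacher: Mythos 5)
Your proof is correct and is essentially the paper's own argument: the paper also cites \cite[Ch.~VIII Corollary 2.4]{AR} for the forward direction and derives the converse from Theorem 6.12 via the Morita and direct-product reductions of Remarks 5.1 and 5.2 (the latter being justified, as you note, by Lemma 2.2). You merely spell out the reduction chain in more detail than the paper's one-line justification.
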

		
		\bigskip
		\section{Artin algebras with radical square zero}
		
		Our aim in this section is to prove that Artin algebras with radical square zero that satisfy $\mathfrak{X}$ are of finite representation type. We will also show that the converse of that statement is not correct.
		
		\medskip

		Let $\Lambda$ be an Artin algebra. Recall the category $\underline{mod}~\Lambda$: 
		\begin{itemize}
			\item Its objects are the objects of mod-$\Lambda$.
			\item $\underline{Hom}~(A,B):=\frac{Hom(A,B)}{P(A,B)}$ for any $A,B$ in mod-$\Lambda$, where $P(A,B)$ denotes the homomorphisms $f:A\rightarrow B$ that factor through a projective module, i.e. $\exists Q$ projective module and $g:A\rightarrow Q$, $h:Q\rightarrow B$ s.t. $f=h\circ g$.
		\end{itemize}
		\begin{definition}
			We say that two Artin algebras $\Lambda$ and $\Gamma$ are stably equivalent if there exists an equivalence of categories $F:\underline{mod}~\Lambda\rightarrow \underline{mod}~\Gamma$.
		\end{definition}
		
		\begin{remark}
			If two Artin algebras are Morita equivalent then the same functor that induces the equivalence induces a stable equivalence between them since projective modules and maps that factor though projectives are preserved under such equivalences. But it is not true that two stably equivalent algebras are necessarily Morita equivalent. 
		\end{remark}
	
		\begin{proposition}\cite[Ch. X Proposition 1.1]{AR} Let $\Lambda$ and $\Gamma$ be stably equivalent Artin algebras. Then $\Lambda$ is of finite representation type if and only if $\Gamma$ is of finite representation type.
		\end{proposition}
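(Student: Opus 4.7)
The plan is to translate ``finite representation type'' into a statement purely about the stable category, where it becomes manifestly invariant under stable equivalence.

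First I would observe that for any Artin algebra $\Lambda$ the regular representation $\Lambda$ decomposes as a finite direct sum of indecomposable projectives, and every finitely generated indecomposable projective module is (up to isomorphism) a summand of $\Lambda$. Hence there are only finitely many indecomposable projectives up to isomorphism, and $\Lambda$ is of finite representation type if and only if there are only finitely many isomorphism classes of indecomposable \emph{non-projective} finitely generated $\Lambda$-modules.

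Second, I would analyze what happens in $\underline{mod}~\Lambda$. Two properties are key. (a) Every projective $P$ becomes isomorphic to the zero object, because $\mathrm{id}_P$ factors through the projective $P$ itself, so $\underline{\End}(P)=0$. (b) Two indecomposable non-projective modules $M$, $N$ are isomorphic in $\underline{mod}~\Lambda$ if and only if they are isomorphic in $mod~\Lambda$. The essential input for (b) is that $P(M,M) \subseteq \mathrm{rad}(\End(M))$ whenever $M$ is indecomposable non-projective: if some $\varphi \in P(M,M)$ were a unit of the local ring $\End(M)$, writing $\varphi=h\circ g$ with $g:M\to Q$, $h:Q\to M$, $Q$ projective, we would get $\mathrm{id}_M=(\varphi^{-1}h)\circ g$, exhibiting $M$ as a direct summand of $Q$ and hence as a projective module, a contradiction. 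Granted this containment, if $\underline{f}: M \to N$ and $\underline{g}: N \to M$ are mutually inverse in the stable category, then $gf-\mathrm{id}_M \in P(M,M) \subseteq \mathrm{rad}(\End(M))$ and $fg-\mathrm{id}_N \in P(N,N) \subseteq \mathrm{rad}(\End(N))$, so $gf$ and $fg$ are units of the respective local rings; thus $f$ is an honest isomorphism in $mod~\Lambda$.

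Third, I would invoke the given equivalence $F: \underline{mod}~\Lambda \to \underline{mod}~\Gamma$. Any equivalence of additive categories induces a bijection on isomorphism classes of indecomposable objects that are not isomorphic to zero; by (a) and (b), on each side these classes correspond precisely to isomorphism classes of indecomposable non-projective finitely generated modules. Combining with the first paragraph, $\Lambda$ and $\Gamma$ have finite representation type simultaneously.

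The only delicate point is the containment $P(M,M) \subseteq \mathrm{rad}(\End(M))$ for indecomposable non-projective $M$; the rest is formal bookkeeping about equivalences of additive categories and lifting idempotents in local rings. This containment is exactly what makes the stable category a clean invariant of the non-projective part of the representation theory, and explains why killing the projectives costs only finitely many isoclasses on either side.
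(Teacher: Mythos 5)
The paper does not prove this proposition; it is cited directly from Auslander--Reiten--Smal\o{} (Ch.~X, Proposition~1.1), so there is no in-text argument of the paper to compare yours against. Your proof is correct and is essentially the standard argument given in that reference. All the necessary points are in place: an Artin algebra has only finitely many indecomposable projectives up to isomorphism, so finite representation type is controlled by the indecomposable non-projectives; projectives become zero objects in $\underline{mod}~\Lambda$; and the containment $P(M,M)\subseteq\mathrm{rad}\,\End_\Lambda(M)$ for $M$ indecomposable non-projective (which you justify correctly by the splitting argument) shows both that such $M$ remains indecomposable and non-zero stably, because $\underline{\End}(M)$ is then a non-zero quotient of the local ring $\End_\Lambda(M)$ and hence local, and that stable isomorphism between such modules lifts to genuine isomorphism, since $gf$ and $fg$ become units in those local endomorphism rings. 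One tiny point you leave implicit but should flag: you also need that every non-zero indecomposable object of $\underline{mod}~\Lambda$ is stably isomorphic to an indecomposable non-projective module, which follows by decomposing $M$ in $mod~\Lambda$ and observing that distinct non-projective summands give non-trivial idempotents in $\underline{\End}(M)$. With that noted, the bijection on isomorphism classes under the stable equivalence is as you describe and the proof goes through.
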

		\medskip

		The following result follows as a corollary of \cite[Ch. III Proposition 2.7]{AR}:
		\begin{proposition}
			Suppose $T$ and $U$ are semisimple Artin $R$-algebras and let $M$ be a non-zero $T,U$-bimodule where $R$ acts centrally and $M$ is finitely generated over $R$. Then $\Delta=\big(\begin{smallmatrix}
				T & M\\
				0 & U
			\end{smallmatrix}\big)$ is an hereditary Artin $R$-algebra.
		\end{proposition}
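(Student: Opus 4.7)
The plan is to confirm that $\Delta$ is an Artin $R$-algebra and then check that its Jacobson radical is projective as a right $\Delta$-module, which for an Artin algebra is equivalent to right hereditariness.

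The Artin $R$-algebra claim is straightforward: $R$ acts centrally on $\Delta$ via the diagonal embedding $R \hookrightarrow T \times U \subseteq \Delta$ (since $R$ acts centrally on each of $T$, $U$ and $M$), and $\Delta \cong T \oplus M \oplus U$ is finitely generated as an $R$-module because each summand is. Moreover the two-sided ideal $J := \big(\begin{smallmatrix}0 & M \\ 0 & 0\end{smallmatrix}\big)$ satisfies $J^2 = 0$ and has semisimple quotient $\Delta/J \cong T \times U$, so it is precisely the Jacobson radical of $\Delta$.

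For hereditariness I would invoke the standard fact that an Artin algebra has global dimension at most $1$ precisely when its Jacobson radical is projective as a right module over itself (indeed, if $J$ is projective then the canonical sequence $0\to J\to\Lambda\to\Lambda/J\to 0$ shows $\mathrm{pd}(S)\leq 1$ for every simple summand $S$ of $\Lambda/J$, and for an Artin algebra the global dimension is the supremum of $\mathrm{pd}(S)$ over simples). So the task reduces to showing that $J$ is a projective right $\Delta$-module. A direct computation shows that the right $\Delta$-action on $J$ factors through the projection $\Delta \twoheadrightarrow U$ (the $T$-component of any element of $\Delta$ annihilates $J$ on the right), so as a right $\Delta$-module $J$ is isomorphic to $M$ regarded as a right $U$-module with $T$ acting trivially. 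In exactly the same way, the block $e_2 \Delta = \big(\begin{smallmatrix}0 & 0 \\ 0 & U\end{smallmatrix}\big)$ has $\Delta$-action factoring through $U$, so $e_2\Delta \cong U$ inside $\mathrm{mod}\text{-}\Delta$; since $U$ is semisimple, $e_2\Delta$ breaks into a finite direct sum of simple projective right $\Delta$-modules $Q_1,\dots,Q_r$ realising every isoclass of simple right $U$-module. Semisimplicity of $U$ also forces $M$ to decompose as a finite direct sum of simple right $U$-modules, and each such simple is isomorphic as a right $\Delta$-module to one of the $Q_k$. Therefore $J$ is a finite direct sum of the $Q_k$'s and in particular projective, so $\Delta$ is hereditary. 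The only point of care is the identification of simple right $U$-modules (with $T$ acting trivially) with the projective summands $Q_k$ of $e_2\Delta$, but this identification is canonical because the $\Delta$-action on both $J$ and $e_2\Delta$ factors through $\Delta \twoheadrightarrow U$.
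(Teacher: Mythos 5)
Your proof is correct, and it is a genuinely different argument from the paper's. The paper gives no internal proof at all: it simply deduces the statement as a corollary of \cite[Ch.\ III Proposition 2.7]{AR}, which provides a general criterion for triangular matrix rings (typically phrased in terms of hereditariness of the two corner rings and a projectivity condition on the bimodule $M$), and observes that this criterion is trivially satisfied when $T$ and $U$ are semisimple. You instead reconstruct the relevant part of that criterion from scratch: you identify the Jacobson radical as $J=\big(\begin{smallmatrix}0 & M\\ 0 & 0\end{smallmatrix}\big)$ by checking $J^2=0$ and $\Delta/J\cong T\times U$, observe that the right $\Delta$-action on $J$ factors through the projection onto $U$ so that $J\cong M_U$, decompose $M$ into finitely many simple right $U$-modules (finite length over $U$ follows from $M$ being finitely generated over the commutative artinian $R$, which maps centrally into $U$), and identify these simples with the projective summands of $e_2\Delta$. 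Invoking the standard fact that an Artin algebra is hereditary iff its radical is projective completes the argument. Your route has the advantage of being self-contained and elementary, at the cost of re-deriving a special case of a result the paper was content to quote; in a paper one would likely do exactly what the authors did, but as a sanity check your proof is both correct and illuminating.
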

		
		\begin{corollary}
			Let $\Lambda$ be any Artin $R$-algebra such that $J^2=0$, where $J$ denotes its Jacobson radical, and let $\Gamma$ be the upper triangular matrix algebra $\big(\begin{smallmatrix}
				\Lambda/J & J\\
				0 & \Lambda/J
			\end{smallmatrix}\big)$. Then $\Gamma$ is hereditary.
		\end{corollary}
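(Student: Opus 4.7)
The plan is to reduce to a direct application of Proposition 7.4, after handling a degenerate edge case and verifying the bimodule hypotheses.

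First I would dispose of the case $J=0$ separately, since Proposition 7.4 requires $M\neq 0$. If $J=0$, then $\Lambda$ is semisimple and $\Gamma\cong \Lambda\times\Lambda$, which is again semisimple and therefore hereditary (every module is projective). So in the sequel I may assume $J\neq 0$.

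Next I would set $T=U=\Lambda/J$. Since $\Lambda$ is an Artin $R$-algebra, $T=U$ is semisimple artinian; moreover, being a quotient of $\Lambda$, it is naturally a finitely generated $R$-algebra, hence a semisimple Artin $R$-algebra. The central point is to put a $T$-$U$-bimodule structure on $M:=J$. A priori, $J$ is a $\Lambda$-$\Lambda$-bimodule, but because $J^2=0$, the left action of any element of $J$ on $M=J$ is zero, and likewise on the right; hence the $\Lambda$-actions factor through $\Lambda/J=T=U$, giving $J$ the structure of a $T$-$U$-bimodule. Centrality of $R$ is inherited: $R$ acts centrally on $\Lambda$, so it acts centrally on $J$ (in particular the two induced $R$-actions on $J$ coincide, and $R$ commutes with both $T$- and $U$-actions). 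Finite generation of $J$ as an $R$-module is immediate since $\Lambda$ is finitely generated over $R$ and $R$ is (commutative) noetherian, so the submodule $J$ is also finitely generated over $R$. Finally, $M=J\neq 0$ by assumption.

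Having verified all the hypotheses of Proposition 7.4, I conclude that
\[
\Gamma=\begin{pmatrix} T & M\\ 0 & U\end{pmatrix}=\begin{pmatrix} \Lambda/J & J\\ 0 & \Lambda/J\end{pmatrix}
\]
is a hereditary Artin $R$-algebra, which is the desired conclusion.

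There is really no substantial obstacle here; the only subtle point is recognizing that the radical-square-zero condition is precisely what is needed to descend the natural $\Lambda$-bimodule structure on $J$ to a $\Lambda/J$-bimodule structure, which is what makes Proposition 7.4 applicable. Once that observation is made, the verification is routine.
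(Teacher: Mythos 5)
Your proposal is correct and follows exactly the route the paper intends: the corollary is meant to be a direct application of the preceding proposition (on upper triangular matrix rings $\bigl(\begin{smallmatrix}T&M\\0&U\end{smallmatrix}\bigr)$ with $T,U$ semisimple), taking $T=U=\Lambda/J$ and $M=J$, with $J^2=0$ being precisely what makes $J$ a $\Lambda/J$-bimodule. You are in fact a bit more careful than the paper's implicit argument, since you separately dispose of the degenerate case $J=0$ (where the nonvanishing hypothesis on $M$ fails) and explicitly verify the finite generation and centrality hypotheses.
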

		\medskip
		
		By \cite[Ch. III Proposition 2.2]{AR} it follows that we can see the modules over $\Gamma=\big(\begin{smallmatrix}
			\Lambda/J & J\\
			0 & \Lambda/J
		\end{smallmatrix}\big)$ as tuples $(A,B,\phi)$ where $A$ is a module over $\Lambda/J$, $B$ is a module over  $\Lambda/J$ and $\phi: A\otimes J\rightarrow B$ is a $\Lambda/J$-linear map. Then the same results that we stated for example in Prop. 4.1 apply in our context since both are direct consequences of the results in \cite[Ch. III]{AR}. This tells us in particular that any simple module over $\Gamma$ is either of the form $(S,0,0)$ or $(0,S,0)$, where $S$ is some simple module over $\Lambda/J$ (and therefore over $\Lambda$, since $J$ annihilates simples); and also that if $A=T_1\oplus...\oplus T_n$, and $B=H_1\oplus...\oplus H_m$ with $H_i, T_j$ simple $\Lambda/J$ modules then
		\begin{equation}
			0\subseteq (0,H_1,0)\subseteq (0,H_1\oplus H_2,0)\subseteq...\subseteq (0,B,0)\subseteq
		\end{equation}
		\begin{equation}
			\subseteq (T_1,B,\phi|_{T_1})\subseteq (T_1\oplus T_2,B,\phi|_{T_1\oplus T_2})\subseteq ...\subseteq (A,B,\phi)
		\end{equation}
		is a composition series for $(A,B,\phi)$, and  $\{(T_j,0,0)\}\cup \{(0,H_i,0)\}$ are its composition factors. Define now the functor $F: mod$-$\Lambda\rightarrow mod$-$\Gamma$:
		\begin{itemize}
			\item $F(M)=(M/JM, JM, f)$ where $f: J\otimes_{\Lambda/J} M/JM\rightarrow JM$ is  given by  $f(x\otimes \tilde{m})=xm$.
			
			\item For morphisms, recall that if $g:M\rightarrow N$ is a morphism in $mod$-$\Lambda$ then $g(JM)=Jg(M)\subseteq JN$ and so the restriction of $g$ to $JM$ has image in $JN$. Call the restriction $\tilde{g}$. Then $g$ induces a morphism $g':M/JM\rightarrow N/JN$. Define $F(g)=(g',\tilde{g})$.
			
		\end{itemize}
		\bigskip
		
		\begin{remark}
			It follows from the comments above that $l(M)=l(F(M))$ and that, if $T_1,...,T_n$ are the composition factors of $JM$ and $H_1,...,H_m$ are the composition factors of $M/JM$, then $(0,T_1,0),...,(0,T_n,0),(H_1,0,0),...,(H_m,0,0)$ are the composition factors of $F(M)$. 
		\end{remark}
		\medskip
		
		\begin{theorem}\cite[Ch. X Lemma 2.1, Theorem 2.4]{AR} Let $F$ be defined as above. Then $F$ induces a stable equivalence $\underline{mod}$-$\Lambda\rightarrow \underline{mod}$-$\Gamma$, which we denote also by $F$, and it has the following properties:
			\begin{itemize}
				\item[a)] $M$ in mod-$\Lambda$ is indecomposable if and only if $F(M)$ is indecomposable.
				\item[b)] $M$ and $M'$ are isomorphic if and only if $F(M)$ and $F(M')$ are isomorphic.
				\item[c)] Let $X=(A,B,\phi)$ in mod-$\Gamma$. Then $X\cong F(M)$ for some $M$ in mod-$\Lambda$ if and only if $\phi$ is an epimorphism.
				\item[d)] Let $X=(A,B,\phi)$ indecomposable in mod-$\Gamma$ . Then $X\ncong F(M)$ for any $M$ in mod-$\Lambda$ if and only if $X\cong (0,S,0)$, with $S$ some simple $\Lambda$-module.
			\end{itemize}
			
		\end{theorem}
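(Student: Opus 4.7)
My plan is to verify the four properties and then use them to upgrade $F$ to a stable equivalence. Well-definedness and functoriality of $F$ come essentially for free from $J^{2}=0$: the tensor $J\otimes_{\Lambda/J}M/JM$ makes sense because $J\cdot J=0$ so the right $\Lambda$-action on $J$ factors through $\Lambda/J$, and the multiplication map $f$ descends to $M/JM$ because $J\cdot JM=0$. The bulk of the argument lies in (c) and (d); once those are secured, (a), (b) and the stable equivalence follow quickly.

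For (c), one direction is transparent: for any $M$, the map $J\otimes_{\Lambda/J}M/JM\to JM$ sending $x\otimes\overline{m}$ to $xm$ hits generators, so $f$ is an epimorphism. The converse is the core technical point. Given $X=(A,B,\phi)$ with $\phi$ surjective, I would construct $M$ as a suitable quotient of a projective cover $p\colon P\twoheadrightarrow A$ of $A$ in mod-$\Lambda$: the multiplication map $J\otimes_{\Lambda/J}A\twoheadrightarrow JP$ and the given $\phi\colon J\otimes_{\Lambda/J}A\twoheadrightarrow B$ are both surjective, and using that $P$ is a projective cover one checks that the kernel of the first lies in the kernel of $\phi$, giving a well-defined epimorphism $\pi\colon JP\twoheadrightarrow B$. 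Setting $M:=P/\ker\pi$ and tracing definitions yields $M/JM\cong A$, $JM\cong B$ and structure map $\phi$, so $F(M)\cong X$. Property (d) is then an application of semisimplicity of $\Lambda/J$: if $\phi$ fails to be surjective, pick a $\Lambda/J$-complement $C$ of $\mathrm{im}(\phi)$ in $B$, which gives $X=(A,\mathrm{im}(\phi),\phi)\oplus(0,C,0)$; the first summand lies in the image of $F$ by (c), so indecomposability of $X$ combined with $X\notin\mathrm{im}(F)$ forces $X=(0,C,0)$, and this is indecomposable only when $C$ is simple.

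Properties (a) and (b) follow next. Since $F$ preserves direct sums, it carries decompositions of $M$ to decompositions of $F(M)$. Conversely, any decomposition $F(M)=X_{1}\oplus X_{2}$ forces each $X_{i}$ to have an epic structure map (because their direct sum does), so by (c) each $X_{i}\cong F(M_{i})$; $F$ is faithful on morphisms by direct inspection of the formula $F(g)=(g',\tilde g)$, which lifts this back to $M\cong M_{1}\oplus M_{2}$. Part (b) is the same lifting applied to an isomorphism. For the stable equivalence, the key observation is that the objects $(0,S,0)$ appearing in (d) are projective as $\Gamma$-modules: they are direct summands of the projective $(0,\Lambda/J,0)$, since $\Lambda/J$ is semisimple and $S$ is therefore a summand of $\Lambda/J$. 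Hence these become zero objects in $\underline{mod}$-$\Gamma$, and (c) together with (d) then shows $F$ is essentially surjective on the stable category. The main obstacle---and the step that genuinely uses $J^{2}=0$ beyond the construction of $M$---is to verify that $F$ is full and faithful after quotienting both sides by maps factoring through projectives; this is the content of \cite[Ch.~X Lemma 2.1]{AR} and amounts to lifting a given morphism $F(M)\to F(N)$ along projective covers, with the ambiguity in the lift precisely absorbed by maps through projective $\Lambda$-modules.
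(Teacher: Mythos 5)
First, a point of calibration: the paper does not prove this statement at all --- it is quoted directly from \cite[Ch.~X, Lemma 2.1 and Theorem 2.4]{AR} --- so what you have written has to be judged as a reconstruction of Auslander--Reiten--Smal\o's argument rather than matched against a proof in the paper. A good deal of your sketch is sound. Functoriality of $F$ from $J^2=0$ is right. For (c), your construction works, and in fact more is true than you claim: for a projective cover $p\colon P\to A$ the multiplication map $J\otimes_{\Lambda/J}P/JP\to JP$ is an \emph{isomorphism} (check it for $P=\Lambda$ and pass to finite direct sums and summands, using $J^2=0$), so the kernel comparison you invoke is automatic and $M=P/\ker\pi$ does satisfy $F(M)\cong X$. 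Your (d) is essentially correct, except that you never verify the easy direction that $(0,S,0)\ncong F(M)$ (if $M/JM=0$ then $M=0$ by Nakayama). The observation that $(0,S,0)$ is a direct summand of the projective $\Gamma$-module $(0,\Lambda/J,0)=e_2\Gamma$, hence vanishes stably, is correct and does give density of the induced stable functor.

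The genuine gap is in your treatment of (a) and (b). You pass from $F(M)\cong F(M_1)\oplus F(M_2)=F(M_1\oplus M_2)$ to $M\cong M_1\oplus M_2$ ``because $F$ is faithful'', but faithful functors do not reflect isomorphism of objects, and $F$ is \emph{not} full on the module categories: a given isomorphism $(\alpha,\beta)\colon F(M)\to F(N)$ need not be of the form $F(g)$, so there is nothing to ``lift back'' by faithfulness alone. What is actually needed is an argument along the lines of: take projective covers $p\colon P\to M$ and $q\colon Q\to N$; lift $\alpha\colon M/JM\to N/JN$ to $\theta\colon P\to Q$, an isomorphism because $\alpha$ is; using $J^2=0$ one checks that $q\circ\theta$ and $\beta\circ p$ agree on $JP$ (any two lifts of $\alpha(\bar m)$ differ by an element of $JN$, which $J$ annihilates), and then injectivity of $\beta$ forces $\theta(\ker p)=\ker q$, so $\theta$ descends to an isomorphism $M\cong N$. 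Without such an argument (b), and hence your proof of (a), is unproven. Finally, the crux of the statement --- that $F$ induces isomorphisms $\underline{\Hom}_\Lambda(M,N)\cong\underline{\Hom}_\Gamma(F(M),F(N))$ --- is not proved in your sketch but only attributed to \cite[Ch.~X, Lemma 2.1]{AR} with a one-sentence gloss; that is no worse than what the paper itself does, but it means your text is an annotated outline of the cited proof rather than an independent proof of the theorem.
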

		\bigskip
		
		We are in a good position now to establish our main theorem of this section:
		\begin{theorem}
			Let $\Lambda$ be an Artin algebra with $J^2=0$ that satisfies $\mathfrak{X}$, ~~and let $\Gamma=\big(\begin{smallmatrix}
				\Lambda/J & J\\
				0 & \Lambda/J
			\end{smallmatrix}\big)$. Then $\Gamma$ satifies $\mathfrak{X}$.
		\end{theorem}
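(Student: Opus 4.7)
The plan is to use the stable equivalence $F\colon$ mod-$\Lambda \to$ mod-$\Gamma$ from Theorem 7.6 to transport $\mathfrak{X}$ from $\Lambda$ to $\Gamma$. First I would invoke parts (c) and (d) of Theorem 7.6 to classify indecomposable $\Gamma$-modules into two disjoint families: those isomorphic to $F(M)$ for some indecomposable $M$ in mod-$\Lambda$, and those isomorphic to $(0,S,0)$ for some simple $\Lambda$-module $S$. I would then show that the isomorphism class of any indecomposable $\Gamma$-module is determined by its composition-factor data by treating the within-family and cross-family cases separately.

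Next I would read off composition-factor data using Remark 7.5. For $F(M)$, the multiplicity of the simple $\Gamma$-module $(T,0,0)$ in $F(M)$ equals the multiplicity of $T$ as a composition factor of $M/JM$, while the multiplicity of $(0,T,0)$ in $F(M)$ equals the multiplicity of $T$ as a composition factor of $JM$. In particular, since $J$ annihilates simples, whenever $M$ itself is simple one has $JM=0$ and $F(M)=(M,0,0)$, so its unique composition factor is of the form $(\cdot,0,0)$. Hence a cross-type collision with $(0,S,0)$ is impossible: if $F(M)$ had the same composition factors as $(0,S,0)$ then $F(M)$ would be simple, forcing $M$ to be simple (as $F$ preserves length by Remark 7.5), and $F(M)$ would then lie on the $(T,0,0)$-side, not the $(0,T,0)$-side.

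It remains to handle pairs within each family. Two modules $(0,S_1,0)$ and $(0,S_2,0)$ with matching composition factors must have $S_1\cong S_2$. For $F(M)$ and $F(N)$ with $\mathfrak{S}(F(M))=\mathfrak{S}(F(N))$, the split recalled above gives separate equalities of composition-factor data for the pairs $(JM,JN)$ and $(M/JM,N/JN)$; combining these via the short exact sequence $0\to JM\to M\to M/JM\to 0$ yields that $M$ and $N$ have the same composition factors as $\Lambda$-modules. Since $M$ and $N$ are indecomposable in mod-$\Lambda$ by Theorem 7.6(a), the hypothesis $\mathfrak{X}$ for $\Lambda$ gives $M\cong N$, and then Theorem 7.6(b) gives $F(M)\cong F(N)$.

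The only substantive point is the cross-type step, since a priori one might worry that some $F(M)$ could reproduce the composition-factor vector of an $(0,S,0)$. What disposes of this is the general fact that $J$ annihilates simple modules, which forces $F$ to place every simple $\Lambda$-module into the ``$A$-slot'' of $\Gamma$-modules rather than the ``$B$-slot''. Beyond this observation, the argument is a direct bookkeeping exercise with Theorem 7.6 and Remark 7.5.
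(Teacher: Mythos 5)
Your proof is correct and follows essentially the same strategy as the paper's: classify indecomposable $\Gamma$-modules via Theorem 7.6 into those of the form $F(M)$ and the simples $(0,S,0)$, read off composition factors slot-wise, and transport $\mathfrak{X}$ from $\Lambda$ through $F$. The one cosmetic difference is how the cross-type collision is ruled out: the paper first derives $A\cong A'$, $B\cong B'$ from matching composition factors (using semisimplicity of $\Lambda/J$), which makes a collision between $F(M)$ and $(0,S,0)$ impossible since $A'=0$, $B'=S$ forces $Y\cong(0,S,0)$, whereas you argue by length that $F(M)$ would have to be simple of the form $(T,0,0)$; both are valid and equally short.
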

		\begin{proof}
			We need to show that if $X=(A,B,\phi)$ and $Y=(A',B',\psi)$ are two indecomposable modules of $\Gamma$ with the same composition factors, then they are isomorphic. Since $\Lambda/J$ is semisimple, then both $A$ and $B$ are semisimple modules and so we can write $A\cong T_1\oplus...\oplus T_n$, and $B\cong H_1\oplus...\oplus H_m$ with $H_i, T_j$ simple $\Lambda/J$-modules~ (and so simple $\Lambda$-modules). We know that in that case the composition factors of $X$ are $\{(T_j,0,0)\}\cup \{(0,H_i,0)\}$. Hence, since $X$ and $Y$ have the same composition factors and $\Lambda/J$ is semisimple  we must also have  $A'\cong T_1\oplus...\oplus T_n$, and $B'\cong H_1\oplus...\oplus H_m$. Thus, $A\cong A'$ and $B\cong B'$ as $\Lambda/J$-modules (and so as $\Lambda$-modules).
			\medskip
			
			Now, by Theorem 7.6 we know that the only indecomposable modules over $\Gamma$ which are not of the form $F(M)=(M/JM,JM,f)$ with $M$ indecomposable over $\Lambda$, are the ones of the form $(0,S,0)$ with $S$ simple module over $\Lambda$. Therefore, since those modules are simple they are trivially determined by their composition factors, we just need to check that the ones of the form $F(M)=(M/JM,JM,f)$ with $M$ indecomposable over $\Lambda$ are determined by their composition factors.
			\medskip
			
			For, let $M$ and $N$ indecomposable modules over $\Lambda$  s.t. $F(M)=(M/JM,JM,f)$ and $F(N)=(N/JN,JN,f)$ have the same composition factors. This means, by the first paragraph above, that $M/JM\cong N/JN$ and $JM\cong JN$ as $\Lambda$-modules. Hence, since the composition factors of $M$ are the union of the composition factors of $JM$ and $M/JM$, it follows that $M$ and $N$ have the same composition factors, and so they must be isomorphic since they are indecomposable and  $\Lambda$ satisfies $\mathfrak{X}$. But then $F(N)\cong F(M)$ and we conclude that $\Gamma$ satisfies $\mathfrak{X}$.
			
		\end{proof}
		
		\begin{corollary}
			Any Artin algebra $\Lambda$ with $J^2=0$ that satisfies $\mathfrak{X}$ must be of finite representation type.
		\end{corollary}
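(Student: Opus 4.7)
The plan is to chain together the three main results already established in the paper and conclude essentially immediately. The corollary is a formal consequence of Theorem 7.7, Corollary 7.4, Theorem 6.13, and the stable equivalence of Theorem 7.6 combined with Proposition 7.3. No new combinatorial work is needed; everything is in place.

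Concretely, I would proceed as follows. Let $\Lambda$ be an Artin algebra with $J^2 = 0$ that satisfies $\mathfrak{X}$, and form the upper triangular matrix algebra
\[
\Gamma \;=\; \begin{pmatrix} \Lambda/J & J \\ 0 & \Lambda/J \end{pmatrix}.
\]
First, invoke Theorem 7.7 to transfer property $\mathfrak{X}$ from $\Lambda$ to $\Gamma$. Second, apply Corollary 7.4, which tells us that $\Gamma$ is a hereditary Artin algebra (here using that $\Lambda/J$ is semisimple and $J$ is a $\Lambda/J$-bimodule on which the base ring acts centrally). Third, use the main theorem of the hereditary case, Theorem 6.13: a hereditary Artin algebra satisfies $\mathfrak{X}$ if and only if it is of finite representation type. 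Hence $\Gamma$ is of finite representation type.

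It remains to transfer finite representation type back from $\Gamma$ to $\Lambda$. This is the role of the stable equivalence $F\colon \underline{\mathrm{mod}}\,\Lambda \to \underline{\mathrm{mod}}\,\Gamma$ of Theorem 7.6, together with Proposition 7.3, which states that finite representation type is preserved under stable equivalences. Applying these gives that $\Lambda$ is itself of finite representation type, completing the proof.

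The main obstacle is nothing beyond bookkeeping: each step is a direct citation of a preceding result. The only point that might require a sentence of care is observing that Theorem 7.7 indeed applies with the specific $\Gamma$ that Corollary 7.4 certifies to be hereditary, so that Theorem 6.13 can be invoked on the same algebra. Once this alignment is noted, the conclusion is immediate and the proof is a short four-line argument.
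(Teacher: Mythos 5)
Your proposal is correct and follows exactly the paper's own argument: transfer $\mathfrak{X}$ to $\Gamma$ via Theorem 7.7, note $\Gamma$ is hereditary by Corollary 7.4, conclude $\Gamma$ is of finite representation type by the hereditary characterization, and pull finite representation type back to $\Lambda$ through the stable equivalence of Theorem 7.6 together with the proposition that stable equivalence preserves finite representation type. The only discrepancy is in the labels (the preservation result is Proposition 7.2 in the paper's numbering, not 7.3), which does not affect the substance.
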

		\begin{proof}
			By Theorem 7.7 we know that $\Gamma=\big(\begin{smallmatrix}
				\Lambda/J & J\\
				0 & \Lambda/J
			\end{smallmatrix}\big)$ also satisfies $\mathfrak{X}$, and  $\Gamma$ is hereditary by Corollary 7.4. Therefore, by Theorem 6.14, $\Gamma$ must be of finite representation type. But then, by Proposition 7.2,  $\Lambda$ is of finite representation type.
		\end{proof}
		\bigskip
		
		We would like to finish this section by showing a counterexample for the converse of Corollary 7.8.	Let $Q$ be the following quiver
	
	\begin{center}
		\begin{tikzpicture}
			\node (a) at (5,0) {1};
			\node (b) at (7,0) {2};
			\draw[-latex,bend right]  (a) edge node [below] {$\alpha$} (b) ;
			\draw[-latex,bend right]  (b) edge node [above] {$\beta$} (a) ;
		\end{tikzpicture}
	\end{center} and let $K$ be any field. Consider the  path algebra $KQ$, and let $I$ be the ideal generated by $\alpha\cdot\beta$ and $\beta\cdot\alpha$. Let $A$ be the algebra $KQ/I$. Recall that there is an equivalence of categories between mod-$A$ and $\mathfrak{rep}_{K}Q$, where $\mathfrak{rep}_{K}Q/I$  is  the  full  subcategory of representations of $Q$ whose objects are representations of $Q$ 
		
		\begin{center}

		\begin{tikzpicture}
			\node (a) at (5,0) {V};
			\node (b) at (7,0) {W};
			\draw[-latex,bend right]  (a) edge node [below] {f} (b) ;
			\draw[-latex,bend right]  (b) edge node [above] {g} (a) ;
		\end{tikzpicture}
	\end{center}s.t. $0=fg=gf$. We can therefore identify modules with such representations. Consider now the indecomposable representations $P_1$

		\begin{center}

		\begin{tikzpicture}
			\node (a) at (5,0) {K};
			\node (b) at (7,0) {K};
			\draw[-latex,bend right]  (a) edge node [below] {1} (b) ;
			\draw[-latex,bend right]  (b) edge node [above] {0} (a) ;
		\end{tikzpicture}
	\end{center}and $P_2$
		
		\begin{center}

		\begin{tikzpicture}
			\node (a) at (5,0) {K};
			\node (b) at (7,0) {K};
			\draw[-latex,bend right]  (a) edge node [below] {0} (b) ;
			\draw[-latex,bend right]  (b) edge node [above] {1} (a) ;
		\end{tikzpicture}
	\end{center}These are representations of $Q/I$ since $0=1\cdot 0=0\cdot 1$. They are non-isomorphic because for being isomorphic we should have, after identifying invertible linear maps from $K$ to $K$ with non-zero scalars in $K$,  non-zero elements $a,b\in K$ s.t. $0\cdot a=b\cdot 1$, which is impossible. Consider now the simple subrepresentation $S_2$ of $P_1$
		\begin{center}

		\begin{tikzpicture}
			\node (a) at (5,0) {0};
			\node (b) at (7,0) {K};
			\draw[-latex,bend right]  (a) edge node [below] {0} (b) ;
			\draw[-latex,bend right]  (b) edge node [above] {0} (a) ;
		\end{tikzpicture}
	\end{center}
		and the simple subrepresentation  $S_1$ of $P_2$
		\begin{center}

		\begin{tikzpicture}
			\node (a) at (5,0) {K};
			\node (b) at (7,0) {0};
			\draw[-latex,bend right]  (a) edge node [below] {0} (b) ;
			\draw[-latex,bend right]  (b) edge node [above] {0} (a) ;
		\end{tikzpicture}
	\end{center}
		\medskip
		
		One can check that $P_1/S_2\cong S_1$ and $P_2/S_1\cong S_2$, and so both $P_1$ and $P_2$ have the same composition factors. But this just means that $A$ does not satisfy $\mathfrak{X}$.
		\medskip
		
		It is not hard to check that the only indecomposable representations of $Q/I$ are $S_1,S_2,P_1,P_2$ and so $A$ is of finite representation type.  Therefore, the length of any indecomposable module over $A$ is at most two. Furthermore, we can write $A=A_1\oplus A_2$  where $A_i$ is the projective indecomposable module corresponding to $P_i$ under the equivalence of categories. We necessarily have that $0\subseteq JA_i\subseteq  A_i$ is a composition series for $A_i$ and so in particular $J^2A_i=0$ since $J^2A_i$ is properly contained in $JA_i$. Hence, $J^2=J^2A=J^2 A_1\oplus J^2A_2=0$. Therefore, we get:
		\begin{proposition}
			Let $K$ be any field. Then there exists a finite-dimensional $K$-algebra $A$ with radical square zero and  of finite representation type which does not satisfy $\mathfrak{X}$.
		\end{proposition}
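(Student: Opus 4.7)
The plan is to exhibit an explicit counterexample: the bound quiver algebra $A = KQ/I$, where $Q$ is the two-vertex quiver with opposite arrows $\alpha\colon 1\to 2$ and $\beta\colon 2\to 1$, and $I = (\alpha\beta, \beta\alpha)$. The strategy mirrors the construction already sketched after Corollary 7.8, and breaks into three verifications: (a) $A$ has radical square zero, (b) $A$ is of finite representation type, and (c) $A$ fails $\mathfrak{X}$.

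For (a) and (b), I would first pass through the standard equivalence $\mathrm{mod}\text{-}A \simeq \mathfrak{rep}_K(Q,I)$, where objects are diagrams of $K$-vector spaces $V \rightleftarrows W$ with maps $f,g$ satisfying $fg = gf = 0$. The indecomposable projective $A$-modules $A_1, A_2$ correspond under this equivalence to the representations $P_1, P_2$ described in the excerpt (one-dimensional at each vertex, with a single non-zero structure map). A direct composition-series inspection shows $0 \subsetneq JA_i \subsetneq A_i$, so $J^2 A_i = 0$ for $i = 1,2$, hence $J^2 = J^2 A_1 \oplus J^2 A_2 = 0$. For finite representation type, one classifies indecomposable representations by considering the dimension vector $(\dim V, \dim W)$: for any indecomposable representation, the relations $fg = gf = 0$ force the image of $f$ to lie in the kernel of $g$ and vice versa, so one can iteratively split off direct summands to show that the only indecomposables are the simples $S_1, S_2$ and the length-two modules $P_1, P_2$. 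This gives four indecomposables in total, establishing finite representation type.

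For (c), the key observation is already spelled out: both $P_1$ and $P_2$ have a simple submodule of one type with simple quotient of the other type, so by Jordan--Hölder they share the same composition factors, namely one copy of $S_1$ and one copy of $S_2$. To verify that $P_1 \not\cong P_2$, one argues that any isomorphism $\varphi \colon P_1 \to P_2$ would consist of a pair of non-zero scalars $(a,b) \in K^\times \times K^\times$ making the square with structure maps $1$ and $0$ commute, which forces $0 \cdot a = b \cdot 1$, an impossibility. Thus $P_1$ and $P_2$ are non-isomorphic indecomposables with identical composition factor multiplicities, which contradicts $\mathfrak{X}$.

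I expect the only non-trivial step to be the classification of indecomposables establishing finite representation type; everything else is bookkeeping. One clean way to dispatch this classification is to note that $A$ is a Nakayama algebra (each indecomposable projective and injective is uniserial), so all indecomposables are quotients of indecomposable projectives, and the indecomposable projectives $A_1, A_2$ have length $2$, yielding exactly the four indecomposables $S_1, S_2, P_1, P_2$. With these three pieces assembled, the proposition follows immediately: $A$ is a finite-dimensional $K$-algebra with $J^2 = 0$, of finite representation type, yet failing $\mathfrak{X}$.
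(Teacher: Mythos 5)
Your proposal is correct and uses the same construction as the paper: the bound quiver algebra $KQ/(\alpha\beta,\beta\alpha)$ on a two-vertex oriented cycle, with the same pair $P_1, P_2$ of non-isomorphic length-two indecomposables sharing composition factors $\{S_1, S_2\}$. Your observation that $A$ is a (self-injective) Nakayama algebra is a tidy way to justify the finite-representation-type claim that the paper only asserts as ``not hard to check,'' but the overall route is identical.
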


		\begin{remark}
			The example we have seen can be generalized to an $n$-oriented cycle  $Q_n$ by letting $I_n$ be the ideal generated by all the paths of length $n$ and considering the algebra $Q_n/I_n$.
			
		\end{remark}
		
		\section*{Acknowledgements}
	The results and ideas developed in this article have been deeply influenced by the work of M. Auslander, C.M. Ringel and their respective collaborators. The author feels in debt with them.


\begin{thebibliography}{KMRT98}
		
			\bibitem[ARS97]{AR}  M.Auslander, I.Reiten, S.O. Smalo, \emph{Representation Theory of Artin Algebras}, Cambridge Studies in Advanced Mathematics, Cambridge University Press, 1997.

\bibitem[DR74]{DR2} 	 Vlastimil Dlab and Claus Michael Ringel, \emph{Representations of graphs and algebras}, 
Carleton Mathematical Lecture Notes \textbf{8} (1974).


	\bibitem[DR75]{DR1} 	 Vlastimil Dlab and Claus Michael Ringel, \emph{On algebras of finite representation type}, 
Journal of Algebra \textbf{33}(2), 1975, 306--394.



\bibitem[DR78]{DR3} 	 Vlastimil Dlab and Claus Michael Ringel, \emph{The representations of tame hereditary algebras}, 
Representation theory of algebras: proceedings of the Philadelphia Conference, Lecture notes in pure and applied mathematics \textbf{37}, 1978, 329--353.



	\bibitem[F76]{F} C.Faith, \emph{Algebra II Ring Theory}, Grundlehren der mathematischen Wissenschaften \textbf{191}, Springer Berlin Heidelberg, 1976.
	
		
			\bibitem[R76]{R1}  Claus Michael Ringel, \emph{Representations of K-species and bimodules}, 	Journal of Algebra \textbf{41}(2), 1976, 269--302.
		
		
		
		
		
		
		
		
		\bibitem[RoS74]{RS} 	 J.C.Robson and Lance W.Small, \emph{Hereditary prime PI rings are classical hereditary orders}, 
		Journal of the London Mathematical Society \textbf{2}(3), 1974, 499--503.
		
		
		
		
		
		
	
	








	





	
	
	
	
	
	
\end{thebibliography}
\end{document}